\newtheorem{theorem}{Theorem}[section]
\newtheorem{proposition}[theorem]{Proposition}
\newtheorem{lemma}[theorem]{Lemma}
\newtheorem{corollary}[theorem]{Corollary}
\theoremstyle{definition}
\newtheorem{remark}[theorem]{Remark}
\newtheorem{example}[theorem]{Example}
\newtheorem{definition}[theorem]{Definition}
\numberwithin{equation}{section}
\def \color#1{}
\def\tbigcap{\mathop{\textstyle \bigcap }}
\def\tprod{\mathop{\textstyle \prod }}
\numberwithin{equation}{section}
\newcounter{counterConstant}
\newcommand{\Rmnum}[1]{\expandafter\@slowromancap\romannumeral #1@}
\begin{document}
\title{A dichotomy on the self-similarity of graph-directed attractors}
\author[Falconer]{Kenneth J. Falconer}
\address{Mathematical Institute, University of St Andrews, North Haugh, St
Andrews, Fife, KY16 9SS, Scotland, UK.}
\email{kjf@st-andrews.ac.uk}
\author[Hu]{Jiaxin Hu}
\address{The Department of Mathematical Sciences, Tsinghua University,
Beijing 100084, China.}
\email{hujiaxin@tsinghua.edu.cn}
\author[Zhang]{Junda Zhang}
\address{The Department of Mathematical Sciences, Tsinghua University,
Beijing 100084, P.R. China.}
\email{zjd18@mails.tsinghua.edu.cn}
\thanks{\noindent Supported by the National Natural Science Foundation of
China (No. 11871296).}
\date{\today }

\begin{abstract}
This paper seeks conditions that ensure that the attractor of a graph directed
iterated function system (GD-IFS) cannot be realised as the attractor of a
standard iterated function system (IFS). For a strongly connected directed
graph, it is known that, if all directed
circuits go through a vertex, then for any GD-IFS of similarities on $\mathbb{R}$
based on the graph and satisfying the convex open set condition (COSC),
its attractor associated with this vertex is also the attractor of a (COSC)
standard IFS. In this paper we show the following complementary result.
If a directed circuit does not go through a vertex, then there exists a
GD-IFS based on
the graph such that the attractor associated with this vertex is not the
attractor of any standard IFS of similarities. Indeed, we give algebraic conditions for
such GD-IFS attractors not to be attractors of standard IFSs, and thus
show that `almost-all' COSC GD-IFSs based on the
graph have attractors associated with this vertex that are not the
attractors of any COSC standard IFS.
\end{abstract}

\subjclass[2020]{28A80, 05C20}
\keywords{Graph-directed IFS, attractor, self-similar, convex open set
condition.}
\maketitle
\tableofcontents

\section{Introduction}

An \emph{iterated function system} (IFS) $\{S_{i}\}_{i}$ is a finite set of
distinct contracting maps on a complete metric space which we will assume
here to be $\mathbb{R}^{n}$ \cite{Hutchison1981}. The \emph{attractor} of
the IFS is the unique nonempty compact set $K\subset \mathbb{R}^{n}$ such
that
\begin{equation}
K=\mathop{\textstyle \bigcup }\limits_{i=1}^{m}S_{i}(K).  \label{attract}
\end{equation}%
If these maps are all contracting similarities, we say that this IFS is a
\emph{standard IFS}, and call $K$ a \textit{self-similar set}. A contracting
similarity $S(x)$ on $\mathbb{R}$ can be written as $S(x)=\rho x+b$ where $%
\rho \in (-1,1)\setminus \{0\}$ is the \emph{contraction ratio}.

Separation conditions for IFSs are often required to ensure `not too much
overlapping' in the union (\ref{attract}). A frequent condition is the \emph{%
open set condition} (OSC), meaning that there exists a nonempty open set $%
U\subseteq \mathbb{R}^{n}$, such that $\mathop{\textstyle \bigcup }%
\limits_{i=1}^{m}S_{i}(U)\subseteq U$ with this union disjoint. We say that
the IFS satisfies the \emph{convex open set condition} (COSC) if $U$ can be
chosen to be convex, or we can (equivalently) take $U=\mathrm{int(conv}\,K)$
where `\textrm{conv}\thinspace ' denotes the convex hull, `\textrm{int}'
denotes the interior of a set. We say that the IFS satisfies the \emph{%
convex strong separation condition} (CSSC) if we can take $U=\mathrm{int(conv%
}\,K)$ such that $S_{i}(\mathrm{conv}\,K)\cap S_{j}(\mathrm{conv}%
\,K)=\emptyset $ for any $i\neq j$.

We also consider graph-directed IFSs \cite{MauldinWil1988} based on a given
digraph. A \emph{directed graph}\textit{\ }(or\textit{\ a digraph }for
brevity), $G:=\left( V,E\right) ,$ consists of a finite set of vertices $V$
and a finite set of directed edges $E$ (for brevity we often omit
`directed') with loops and multiple edges allowed. Let $E_{uv}\subset E$ be
the set of edges from the \textit{initial} vertex $u$ to the \textit{terminal%
} vertex $v$. A \textit{graph-directed iterated function system} (GD-IFS) on
$\mathbb{R}^{n}$ consists of a finite collection of contracting similarities
$\left\{ S_{e}:e\in E_{uv}\right\} $ from $\mathbb{R}_{v}^{n}$ to $\mathbb{R}%
_{u}^{n}$ for $u,$ $v\in V$, where $\mathbb{R}_{u}^{n}$ is a copy of $%
\mathbb{R}^{n}$ associated with vertex $u$. We write $\rho _{e}\in
(-1,1)\setminus \{0\}$ for the contraction ratio of the similarity $S_{e}$
in $%
\mathbb{R}
$. We always require the digraph satisfies that $d_{u}\geq 1$ for every $%
u\in V$ (\cite{MauldinWil1988}, \cite[Section 4.3]{Edgar}), where $d_{u}$ is
the \emph{out-degree} of $u$ (the number of directed edges leaving $u$). For
a GD-IFS $\left( V,E,\left( S_{e}\right) _{e\in E}\right) $ based on such a
digraph, there exists a unique list of non-empty compact sets $(F_{u}\subset
\mathbb{R}_{u}^{n})_{u\in V}$ such that, for all $u\in V,$
\begin{equation}
F_{u}=\mathop{\textstyle \bigcup }\limits_{v\in V}\mathop{\textstyle \bigcup
}\limits_{e\in E_{uv}}S_{e}(F_{v}),  \label{gdattract}
\end{equation}%
see \cite{MauldinWil1988} or \cite[Theorem 4.3.5 on p.128]{Edgar}. We call
the above $\left( F_{u}\right) _{u\in V}$ the \emph{(list of) attractors }of
the GD-IFS, and each $F_{u}$ is called a \emph{GD-attractor}. A (finite)
\emph{directed path} $e_{1}e_{2}\mathbb{\cdots }e_{k}$ is a consecutive
sequence of directed edges $e_{i}\in E$ $(i=1,\mathbb{\cdots },k)$ for which
the terminal vertex of $e_{i}$ is the initial vertex of $e_{i+1}$ $(i=1,%
\mathbb{\cdots },k-1)$. For a directed path $\mathbf{e}=e_{1}e_{2}\cdots
e_{k}$ with edges $e_{i}\,(1\leq i\leq k)$, the corresponding contractive
mapping is given by $S_{\mathbf{e}}=S_{e_{1}}\circ S_{e_{2}}\circ \cdots
\circ S_{e_{k}}$, and its contraction ratio along $\mathbf{e}$ is $\rho _{%
\mathbf{e}}=\rho _{e_{1}}\rho _{e_{2}}\cdots \rho _{e_{k}}$.

For a GD-IFS there are analogous separation conditions. The \textit{open set
condition} (OSC) is satisfied if there exist non-empty bounded open sets $%
\left( U_{u}\subset \mathbb{R}_{u}^{n}\right) _{u\in V},$ with
\begin{equation}
\mathop{\textstyle \bigcup }_{v\in V}\mathop{\textstyle \bigcup }_{e\in
E_{uv}}S_{e}\left( U_{v}\right) \subset U_{u}  \label{COSC}
\end{equation}%
and the union is disjoint for each $u\in V.$ The \textit{convex open set
condition} (COSC) means that these $\left( U_{u}\right) _{u\in V}$ can all
be chosen to be convex. In one-dimensional case, one can take
\begin{equation}
(U_{u})_{u\in V}=(\mathrm{int}(\mathrm{conv}\,F_{u}))_{u\in V},  \label{con3}
\end{equation}%
since $\mathrm{conv}\,F_{u}\subset \overline{U_{u}}$ for each $u\in V$ (see
Proposition \ref{APP2} in the Appendix). We say that a GD-IFS satisfies the CSSC
(\emph{convex strong separation condition}), if the union
\begin{equation}
\mathop{\textstyle \bigcup }_{v\in V}\mathop{\textstyle \bigcup }_{e\in
E_{uv}}S_{e}\left( \mathrm{conv}\,F_{v}\right) \text{ \ (which belongs to }%
\mathrm{conv}\,F_{u}\text{)}  \label{CSSC}
\end{equation}%
is disjoint for each $u\in V$.

GD-attractors and GD-IFSs appear naturally in dynamical systems and fractal
geometry. For example, certain complex dynamical systems can be regarded as
conformal GD-IFSs using a Markov partition, see \cite[Section 5.5]%
{FalconerTechniques}. For another occurrence, the orthogonal projection of
certain self-similar sets may be GD-attractors \cite[Theorem 1.1]{Farkas2016}%
. We will work with COSC (including CSSC) GD-IFSs defined on $\mathbb{R}$
based on digraphs with $d_{u}\geq 2$ for every vertex $u$ in $V$ throughout
this paper.

We say that a digraph is strongly connected if, for all vertices $u,v\in V$,
there is a \emph{directed path} from $u$ to $v$ (we allow $u=v$). For
brevity, we will assume throughout that a strongly connected digraph always
satisfies $d_{u}\geq 2$ for all $u\in V$. This is because, if $d_{v}=1$ $%
(v\in V)$ then $F_{v}$ is just a scaled copy of another GD-attractor $F_{w}$
$(w\in V\setminus \{v\})$. Then $F_{v}$ is self-similar (with the COSC) if
and only if $F_{w}$ is self-similar (with the COSC), since if $K$ is the
attractor of the IFS $\{\rho _{i}x+b_{i}\}_{i}$, then $\eta K+l$ is the
attractor of the IFS $\{\rho _{i}x+\eta b_{i}+(1-\rho _{i})l\}_{i}$ $(\eta
,l\in \mathbb{R})$. We can do a reduction as in \cite[pp.607]{EdgarMau1992}
on any strongly connected digraph and associated GD-IFS, to obtain a
subgraph and new GD-IFS with $d_{u}\geq 2$ for all $u\in V$ such that each
attractor is similar to one of the original ones.

A natural question arises, \textquotedblleft When does a
GD-IFS of similarity mappings have attractors which cannot be realised
as attractors of any
standard IFS?\textquotedblright . In particular we seek algebraic conditions
involving the parameters underlying the GD-IFS similarities that ensure this is so.
Some cases were examined in an earlier
paper \cite{BooreFal2013} which showed that, for a class of strongly
connected digraphs, it is possible to construct CSSC GD-IFSs on $\mathbb{R}$
with attractors that cannot be obtained from a standard IFS, with or without
the CSSC. Another paper \cite{Boore2014} uses a different argument to
construct CSSC GD-IFSs on $\mathbb{R}$ with attractors that cannot be
obtained from a standard IFS. This paper further investigates this issue for all
\emph{strongly connected} digraphs (or even wider classes of digraphs).

For a strongly connected digraph $G$, it is known in \cite[Lemma 5.1]%
{Boore2014} (see also Theorem \ref{5.1} in the Appendix) that, if all directed
circuits in $G$ go through a vertex $u\in V$, then for any (COSC) GD-IFS
based on $G$, its attractor $F_{u}$ is also the attractor of a (COSC)
standard IFS. By way of contrast, we will show that if, for some vertex $u\in V$,
not all directed circuits in $G$ go through  $u$, then it is possible to define
GD-IFSs of similarities satisfying the COSC so that the corresponding
attractor $F_{u}$ is not the attractor of a standard IFS of similarities
satisfying the COSC (Lemma \ref{main}). Moreover, this is true for `almost
all' choices of similarities in a natural sense (Theorem \ref{COSC except}).
The proof basically relies on identifying a characteristic of the `gap
length set', where we use a shorter systematical algebraic argument `ratio
analysis' rather than the categorising method of \cite[Section 6]%
{BooreFal2013} which only works for certain classes of digraphs. In fact we
can relax the strong connectivity of $G$ in this construction (Lemma \ref%
{main thm}) and the `ratio analysis' method may have further applications to
other related problems. We finally apply \cite[Theorem 1.4]{Boore2014} (see
also Theorem \ref{App} in the Appendix) to show immediately that, there exists
GD-IFSs of similarities with the CSSC so that the corresponding attractor $%
F_{u}$ is not the attractor of a standard IFS.

GD-IFSs considered in this paper are \textit{inhomogeneous}, by which we
mean GD-IFSs of contracting similarities with not all contraction ratios
equal. We will require the COSC condition, which is easy to verify from the
parameters of a GD-IFS by solving simultaneous linear inequalities. There
are difficulties in relaxing this condition to OSC (even in $\mathbb{R}$)
where many problems still remain open even for standard IFSs, such as the
affine-embedding problem \cite[Conjecture 1.1]{FengXiong2018} or the inverse
fractal problem (determining the \emph{generating }IFSs of a standard IFS
attractor) \cite{FengWang2009}. The question considered here can be viewed
as an inverse-type problem, where we show certain GD-attractors have no
generating standard IFS (with or without the COSC). Previous results on
inhomogeneous self-similar sets also require this condition \cite[Section 4]%
{FengWang2009} or stronger conditions such as SSC and restrictions on
Hausdorff dimension \cite{Algom2018,Elekes2010,FengXiong2018}. Thus one
might expect similar difficulties for inhomogeneous GD-attractors.

This paper is organised as follows. In Section \ref{Sect2}, we first
introduce and obtain an expression for the gap length set of COSC
GD-attractors, and we then introduce our algebraic method `ratio analysis',
and derive a key lemma (Lemma \ref{Lemma 2.9}) relating the ratio sets of
GD-IFSs and standard IFSs with the COSC. In Section \ref{Sect3} we introduce
natural vector sets and construct GD-IFSs satisfying the COSC or the CSSC.
In Section \ref{Sect4} we use the GD-IFSs constructed in Section \ref{Sect3}
to show that the corresponding GD-attractors are not the attractors of COSC
standard IFSs using both the `ratio analysis' lemmas and the tool developed
in \cite{Boore2014}. We provide some examples to illustrate our assertions.

\section{Gap length sets and ratio analysis}

\label{Sect2}

\subsection{Gap length sets}

For a compact set $K\subset \mathbb{R}$ with $(\mathrm{conv}\,K)\setminus
K\neq \emptyset $, let
\begin{equation}
(\mathrm{conv}\,K)\setminus K=\mathop{\textstyle \bigcup }_{i}U_{i}
\label{21}
\end{equation}%
be the unique decomposition of the disjoint non-empty bounded complementary
intervals $\{U_{i}=(a_{i},b_{i})\}_{i}$ (see for example \cite[Chapter 2,
Theorem 9]{Pugh}), which will be called the \emph{gaps} of $K$ numbered by
decreasing length (and left to right for equal length intervals).

\begin{definition}[Gap length set]
Define the \emph{gap length set} of a compact set $K\subset \mathbb{R}$ to
be
\begin{equation*}
\mathrm{GL}(K):=\{b_{i}-a_{i}\}_{i}
\end{equation*}%
that is, the set of lengths of all the gaps of $K$. If $(\mathrm{conv}%
\,K)\setminus K=\emptyset $, that is, if $K$ is an interval (or a
singleton), we define $\mathrm{GL}(K):=\emptyset $.
\end{definition}

For each vertex $u\in V$, we arrange the edges leaving $u$, denoted by $%
e_{u}^{(k)}$ $(k=1,\mathbb{\cdots },d_{u})$ in the following way. Denote by $%
\omega (e)$ the terminal vertex of an edge $e\in E$, then the interiors of
the intervals $S_{e}(\mathrm{conv}\ F_{\omega (e)})$ are disjoint due to the
COSC. We rank these intervals in order from left to right, and denote the $k$%
th interval by
\begin{equation}
S_{u}^{(k)}\left( \mathrm{conv}\,F_{\omega (e_{u}^{(k)})}\right) \,\text{\ \
}(1\leq k\leq d_{u})  \label{sk2}
\end{equation}%
with the edges (and also the GD-IFS $\{S_{e}\}_{e\in E}$) arranged according
to this order.

\begin{definition}[Basic gaps]
\label{BG}With the above notation, for each $u\in V$ and $1\leq k\leq
d_{u}-1 $ ($d_{u}\geq 2$), let $\lambda _{u}^{(k)}$ be the length of the
complementary open interval between $S_{u}^{(k)}\left( \mathrm{conv}%
\,F_{\omega (e_{u}^{(k)})}\right) $ and $S_{u}^{(k+1)}\left( \mathrm{conv}%
\,F_{\omega (e_{u}^{(k+1)})}\right) \,$ (possibly $\lambda _{u}^{(k)}=0$).
All such complementary intervals (possibly empty) are called the \emph{basic
gaps} of this ordered COSC GD-IFS $\{S_{u}^{(k)}\}$ sitting at vertex $u$.
Let
\begin{equation}
\Lambda _{u}:=\{\lambda _{u}^{(k)}:\lambda _{u}^{(k)}>0,1\leq k\leq
d_{u}-1\},  \label{14}
\end{equation}%
be the set of strictly positive lengths of the basic gaps associated with
vertex $u\in V$, see Figure \ref{Basic}.
\end{definition}

\begin{figure}[th]
\begin{center}
\includegraphics[width=12cm, height=2.1cm]{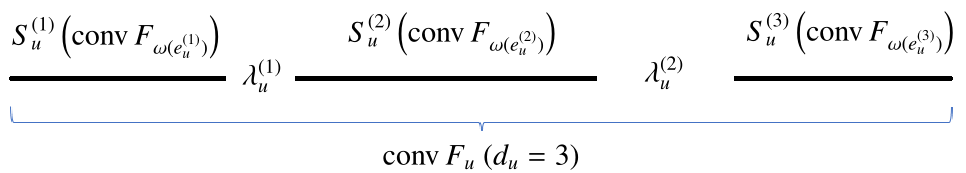}
\end{center}
\caption{Basic gaps of $F_{u}$.}
\label{Basic}
\end{figure}

As standard IFSs are one-vertex GD-IFSs, this definition is also applicable
to standard IFSs when we will omit the single vertex.

The GD-attractors $(F_{u})_{u\in V}$ of any GD-IFS can be determined in the
following way, see \cite[Equation (15)]{MauldinWil1988}. For any list of
compact sets $(I_{u})_{u\in V}$, we define
\begin{equation}
\text{\ }I_{u}^{m}:=\mathop{\textstyle \bigcup }_{\mathbf{e}\in E_{u}^{m}}S_{%
\mathbf{e}}\left( I_{\omega (\mathbf{e})}\right) \text{\ for any }m\geq 1,
\label{000}
\end{equation}%
where $E_{u}^{m}$ denotes the set of paths of length $m$ leaving $u$ and $%
\omega (\mathbf{e})$ denotes the terminal vertex of path $\mathbf{e}$. Note
that if
\begin{equation}
I_{u}^{1}\subset I_{u}\text{ for each }u\in V,  \label{001}
\end{equation}%
then the sequence $I_{u}^{m}$ decreases in $m$ in the sense that $%
I_{u}^{m+1}\subseteq I_{u}^{m}$ for every $m\geq 1$, since
\begin{eqnarray}
I_{u}^{m+1} &=&\mathop{\textstyle \bigcup }_{\widetilde{\mathbf{e}}\in
E_{u}^{m+1}}S_{\widetilde{\mathbf{e}}}\left( I_{\omega (\widetilde{\mathbf{e}%
})}\right) =\mathop{\textstyle \bigcup }_{\mathbf{e}\in E_{u}^{m}}%
\mathop{\textstyle \bigcup }_{e\in E_{\omega (\mathbf{e})}^{1}}S_{\mathbf{e}%
}\circ S_{e}\left( I_{\omega (e)}\right)  \notag \\
&=&\mathop{\textstyle \bigcup }_{\mathbf{e}\in E_{u}^{m}}S_{\mathbf{e}%
}\left( \mathop{\textstyle \bigcup }_{e\in E_{\omega (\mathbf{e}%
)}^{1}}S_{e}\left( I_{\omega (e)}\right) \right) =\mathop{\textstyle \bigcup
}_{\mathbf{e}\in E_{u}^{m}}S_{\mathbf{e}}\left( I_{\omega (\mathbf{e}%
)}^{1}\right)  \label{002-2} \\
&\subseteq &\mathop{\textstyle \bigcup }_{\mathbf{e}\in E_{u}^{m}}S_{\mathbf{%
e}}\left( I_{\omega (\mathbf{e})}\right) =I_{u}^{m}.  \label{002}
\end{eqnarray}%
From this, it is known that for each $u\in V$,
\begin{equation}
F_{u}=\tbigcap_{m=1}^{\infty }I_{u}^{m},  \label{003}
\end{equation}%
provided that (\ref{001}) is satisfied.

In particular, taking $I_{u}=\mathrm{conv}\,F_{u}$ for each $u\in V$, we see
that (\ref{001}) is satisfied, since by (\ref{gdattract})%
\begin{equation}
F_{u}\subseteq \mathop{\textstyle \bigcup }_{v\in V}\mathop{\textstyle
\bigcup }_{e\in E_{uv}}S_{e}(\mathrm{conv}\,F_{v})=I_{u}^{1}\subseteq
\mathrm{conv\Big(}\mathop{\textstyle \bigcup }_{v\in V}\mathop{\textstyle
\bigcup }_{e\in E_{uv}}S_{e}(F_{v})\Big)=\mathrm{conv}\,F_{u}=I_{u}.
\label{006}
\end{equation}%
In this case, the (\ref{003}) is true. Moreover, by taking convex hulls in (%
\ref{006}), we know that%
\begin{equation*}
\mathrm{conv}\,F_{u}\subseteq \mathrm{conv}\,I_{u}^{1}\subseteq \mathrm{conv}%
\,\mathrm{conv}\,F_{u}=\mathrm{conv}\,F_{u},\,
\end{equation*}%
which gives that
\begin{equation}
\mathrm{conv}\,I_{u}^{1}=\mathrm{conv}\,F_{u}=I_{u},  \label{convex}
\end{equation}%
meaning that the two endpoints of the interval $\mathrm{conv}\,I_{u}^{1}$
coincide with those of the interval $\mathrm{conv}\,F_{u}=I_{u}$. This fact
will be used shortly.

Throughout this paper, the product $AB$ of sets $A,\ B\subset \mathbb{R}$ is defined to be
$AB=\{ab:\ a\in A,\ b\in B\},$ and when we encounter the product of a
set in $\mathbb{R}$ and a constant, regard the constant as a set in $\mathbb{%
R}$. If $A$ is an empty set then $AB$ is also empty.

The following proposition gives a characterization for the gap length set of
an attractor $F_{u}$ of any COSC GD-IFS, which slightly extends a result in
\cite[below equation (5.2) in Section 5]{BooreFal2013} to the case when a
GD-IFS satisfies the COSC.

\begin{proposition}
\label{P2}Let $(V,E)$ be a digraph with $d_{u}\geq 2$ for all $u\in V$, and
let $F_{u}$ be a GD-attractor of a GD-IFS in $\mathbb{R}$ with the COSC
based on $(V,E)$. With the above notation, the gap length set $\mathrm{GL}%
(F_{u})$ of the attractor $F_{u}$ is given by
\begin{equation}
\mathrm{GL}(F_{u})=\Lambda _{u}\mathop{\textstyle \bigcup } \Big(%
\mathop{\textstyle \bigcup }\limits_{m=1}^{\infty }\mathop{\textstyle
\bigcup }_{v\in V}\Lambda _{v}\big\{|\rho _{\mathbf{e}}|:\mathbf{e}\text{ is
a directed path from $u$ to $v$ with length }m\big\}\Big).  \label{gls}
\end{equation}%
When there is no directed path from $u$ to $v$, the set $\{|\rho _{\mathbf{e}%
}|\}$ is understood to be empty.
\end{proposition}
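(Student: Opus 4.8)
The plan is to use the representation of $F_u$ as the decreasing intersection $F_u=\bigcap_{m\ge 1}I_u^m$ with $I_u=\mathrm{conv}\,F_u$, which is available from (\ref{003}) and (\ref{006}), and to track exactly which gaps of the sets $I_u^m$ are created at each step. First I would record two structural facts. (i) For every $m$, $I_u^m=\bigcup_{\mathbf e\in E_u^m}S_{\mathbf e}(\mathrm{conv}\,F_{\omega(\mathbf e)})$ is a finite union of non-degenerate closed intervals with pairwise disjoint interiors; this follows by induction on $m$ from the COSC together with the identity $I_u^{m+1}=\bigcup_{\mathbf e\in E_u^m}S_{\mathbf e}(I_{\omega(\mathbf e)}^1)$ of (\ref{002-2}). (ii) For every $m$ one has $\mathrm{conv}\,I_u^m=\mathrm{conv}\,F_u$, and more locally, applying (\ref{convex}) at the vertex $\omega(\mathbf e)$ shows that $S_{\mathbf e}(I_{\omega(\mathbf e)}^1)$ has exactly the same two endpoints as $S_{\mathbf e}(\mathrm{conv}\,F_{\omega(\mathbf e)})$. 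Thus passing from $I_u^m$ to $I_u^{m+1}$ subdivides each level-$m$ interval $S_{\mathbf e}(\mathrm{conv}\,F_{\omega(\mathbf e)})$ \emph{without displacing either of its endpoints}, the part removed inside it being exactly $S_{\mathbf e}$ applied to the basic gaps at $\omega(\mathbf e)$, because $\mathrm{conv}\,F_{\omega(\mathbf e)}\setminus I_{\omega(\mathbf e)}^1$ is precisely the union of those basic gaps.

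Let $\mathcal G_u^m$ denote the finite set of bounded complementary intervals (the "gaps") of $I_u^m$ inside $\mathrm{conv}\,F_u$. By observation (ii), $\mathcal G_u^m\subseteq\mathcal G_u^{m+1}$ and
\[
\mathcal G_u^{m+1}\setminus\mathcal G_u^m=\bigl\{\,S_{\mathbf e}(g):\mathbf e\in E_u^m,\ g\text{ a basic gap of positive length at }\omega(\mathbf e)\,\bigr\},
\]
these new intervals being pairwise disjoint, disjoint from the old gaps, and themselves gaps of $I_u^{m+1}$ (each lies in the interior of a level-$m$ interval, which meets neither the interiors of the other level-$m$ intervals nor the set $(\mathrm{conv}\,F_u)\setminus I_u^m$ where the old gaps sit). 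Since $\mathcal G_u^1$ is exactly the set of positive-length basic gaps at $u$ and $S_{\mathbf e}$ scales lengths by $|\rho_{\mathbf e}|$, a routine induction identifies the set of lengths of the intervals of $\bigcup_{m\ge 1}\mathcal G_u^m$ with the right-hand side of (\ref{gls}).

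It then remains to show $\bigcup_{m\ge 1}\mathcal G_u^m$ is exactly the set of gaps of $F_u$. For "$\subseteq$": if $g\in\mathcal G_u^m$, then by (ii) both endpoints of $g$ are endpoints of level-$m$ intervals, hence lie in $I_u^{m'}$ for every $m'\ge m$, so in $\bigcap_{m'}I_u^{m'}=F_u$, whereas $g\cap F_u\subseteq g\cap I_u^m=\emptyset$; thus $g$ is a connected component of $(\mathrm{conv}\,F_u)\setminus F_u$. For "$\supseteq$": given a gap $(a,b)$ of $F_u$, its midpoint $c$ lies outside $F_u=\bigcap_m I_u^m$, so $c\notin I_u^{m_0}$ for some $m_0$; the gap of $I_u^{m_0}$ containing $c$ is, by the inclusion just proved, a gap of $F_u$ containing $c$, and therefore equals $(a,b)$. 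Hence $\mathrm{GL}(F_u)$ is the set of lengths of the intervals of $\bigcup_m\mathcal G_u^m$, namely (\ref{gls}); the degenerate case $(\mathrm{conv}\,F_u)\setminus F_u=\emptyset$ is just the sub-case where all these sets are empty.

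The step deserving the most care — and the only point at which the COSC, rather than a bare OSC, is needed — is observation (ii): that the endpoints of the level-$m$ intervals are never displaced, so that no gap ever shrinks or merges with a neighbour and every gap of $F_u$ is already present at some finite level, not merely an increasing limit of strictly smaller gaps. This is exactly what (\ref{convex}) supplies, via the fact that the extreme basic subintervals of $I_v^1$ reach the two endpoints of $\mathrm{conv}\,F_v$; granting it, the remainder is straightforward bookkeeping.
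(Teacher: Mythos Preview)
Your proof is correct and follows essentially the same approach as the paper's: both take $I_u=\mathrm{conv}\,F_u$, write $F_u=\bigcap_m I_u^m$, and use (\ref{002-2}) together with (\ref{convex}) to identify the gaps created at each level as images $S_{\mathbf e}(\cdot)$ of basic gaps at $\omega(\mathbf e)$. Your increasing gap families $\mathcal G_u^m$ are a mild repackaging of the paper's telescoping decomposition $(\mathrm{conv}\,F_u)\setminus F_u=(I_u\setminus I_u^1)\cup\bigcup_{m\ge 1}(I_u^m\setminus I_u^{m+1})$; if anything you are more explicit than the paper about why endpoints persist and why every gap of $F_u$ already appears at some finite level.
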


\begin{proof}
When $\mathrm{GL}(F_{u})=\emptyset $, that is, $F_{u}=\mathrm{conv}\,F_{u}$,
we have for all $m\geq 1$
\begin{equation*}
\mathrm{conv}\,F_{u}\supseteq \mathop{\textstyle \bigcup }\limits_{v\in V}%
\mathop{\textstyle \bigcup }\limits_{\mathbf{e}\in E_{uv}^{m}}S_{\mathbf{e}}(%
\mathrm{conv}\,F_{v})\supseteq \mathop{\textstyle \bigcup }\limits_{v\in V}%
\mathop{\textstyle \bigcup }\limits_{\mathbf{e}\in E_{uv}^{m}}S_{\mathbf{e}%
}(F_{v})=F_{u}=\mathrm{conv}\,F_{u},
\end{equation*}%
where $E_{uv}^{m}$ is a collection of paths from vertex $u$ to vertex $v$
with length $m$. From this and using the COSC, we see that
\begin{equation*}
S_{\mathbf{e}}(\mathrm{conv}\,F_{v})=S_{\mathbf{e}}(F_{v})
\end{equation*}%
for every $v\in V$ and every directed path $\mathbf{e}$ of length $m$ from $%
u $ to $v$, showing that $\Lambda _{v}=\emptyset $ for all $v\in V$ to which
a directed path from $u$ exists. Thus (\ref{gls}) is trivial in this case.

In the sequel, we assume that $\mathrm{GL}(F_{u})\neq \emptyset $. Let $u\in
V$ be a vertex. Set $I_{u}:=\mathrm{conv}\,F_{u}$ for each $u\in V$, and (%
\ref{003}) holds true by virtue of (\ref{006}). So the gaps of $F_{u}$ will
be given by
\begin{equation}
(\mathrm{conv}\,F_{u})\setminus F_{u}=I_{u}\setminus \left(
\tbigcap_{m=1}^{\infty }I_{u}^{m}\right) =\left( I_{u}\setminus
I_{u}^{1}\right) \mathop{\textstyle \bigcup } \left( \mathop{\textstyle
\bigcup }_{m=1}^{\infty }I_{u}^{m}\setminus I_{u}^{m+1}\right) ,  \label{004}
\end{equation}%
which consists of the complementary open intervals in $I_{u}\setminus
I_{u}^{1}$ and $I_{u}^{m}\setminus I_{u}^{m+1}$ ($1\leq m<\infty $). We need
to calculate the lengths of these open intervals.

Indeed, for the open set $I_{u}\setminus I_{u}^{1}$, we know by definition (%
\ref{000}) that%
\begin{equation}
I_{u}\setminus I_{u}^{1}=\mathrm{conv}\,F_{u}\setminus \mathop{\textstyle
\bigcup }_{\mathbf{e}\in E_{u}^{1}}S_{\mathbf{e}}\left( I_{\omega (\mathbf{e}%
)}\right) =\mathop{\textstyle \bigcup }_{i=1}^{d_{u}-1}G_{u}^{(r)},
\label{005}
\end{equation}%
where $G_{u}^{(r)}$ for $1\leq r\leq d_{u}-1$ form the basic gaps of $F_{u}$%
, whose lengths form the set $\Lambda _{u}$ by using (\ref{convex}) with the
property that two intervals $I_{u}$ and $I_{u}^{1}$ have the same endpoints,
see Figure \ref{Basic}).

On the other hand, for any $m\geq 1$, due to the COSC, the interiors of the
level-$m$ intervals $\left\{ S_{\mathbf{e}}\left( I_{\omega (\mathbf{e}%
)}\right) \right\} _{\mathbf{e}\in E_{u}^{m}}$ are disjoint for any $m$. We
know by (\ref{002-2}) that
\begin{equation}
I_{u}^{m}\setminus I_{u}^{m+1}=\mathop{\textstyle \bigcup }_{\mathbf{e}\in
E_{u}^{m}}S_{\mathbf{e}}\left( I_{\omega (\mathbf{e})}\setminus I_{\omega (%
\mathbf{e})}^{1}\right) =\mathop{\textstyle \bigcup }_{\mathbf{e}\in
E_{u}^{m}}S_{\mathbf{e}}\left( \mathop{\textstyle \bigcup }_{r=1}^{d_{\omega
(\mathbf{e})}-1}G_{\omega (\mathbf{e})}^{(r)}\right) \text{ \ (using (\ref%
{005})).}  \label{f9}
\end{equation}%
The above union consists of disjoint complementary open intervals $S_{\mathbf{%
e}}(G_{\omega (\mathbf{e})}^{(r)})$, whose lengths are given by $|\rho _{%
\mathbf{e}}|\cdot \lambda _{\omega (\mathbf{e})}^{(r)}$, which form the gap
length sets at the $m$th-level for any $m\geq 1$. Summing up over $m$ will
give the double union in the right-hand side of (\ref{gls}), and so (\ref%
{gls}) follows from (\ref{004}) and the definition of $\mathrm{GL}(F_{u})$.
\end{proof}

\subsection{Ratio analysis}

We will use ``ratio analysis''\ to analyse sets $\Theta $ of positive real
numbers in $(0,\infty )$, in terms of strictly decreasing geometric
sequences $\{\theta ^{\prime }r^{k}\}_{k=0}^{\infty }$ that are contained in
$\Theta $.

\begin{definition}
Let $\Theta \subset (0,\infty )$. For $\theta \in \Theta $, let
\begin{equation}
R_{\Theta }(\theta )=\{r\in (0,1):\text{ there exists some }\theta ^{\prime
}\in \Theta \text{ such that }\theta \in \{\theta ^{\prime
}r^{k}\}_{k=0}^{\infty }\subset \Theta \},  \label{R}
\end{equation}%
the set of common ratios of strictly decreasing geometric sequences in $%
\Theta $ that contains $\theta $ (the set $R_{\Theta }(\theta )$\ may be
empty).
\end{definition}

This concept arises quite naturally as the characteristic set GL($F_u$)
contains many geometric sequences. The following
definition will be used in studying $R_{\mathrm{GL}(F_u) }(\theta )$ later on.

\begin{definition}
For a finite set $A=\{a_{i}\}_{i=1}^{n}\subset (0,\infty )$, define $A^{%
\mathbb{Z}_{+}^{\ast }}$ (resp. $A^{\mathbb{Q}_{+}^{\ast }}$, $A^{\mathbb{Q}%
^{\ast }}$) to be the union of all products $\tprod%
\limits_{i=1}^{n}a_{i}^{m_{i}}$ where $(m_{i})_{i=1}^{n}$ are non-zero
vectors whose entries are nonnegative integers (resp. nonnegative rationals,
rationals). Let $A^{\mathbb{Z}_{+}}=\{1\}\cup A^{\mathbb{Z}_{+}^{\ast }}$,
that is, the union of all products $\tprod\limits_{i=1}^{n}a_{i}^{m_{i}}$
where $(m_{i})_{i}$ are nonnegative integer vectors (including the zero
vector). Similarly, $A^{\mathbb{Q}}=\{1\}\cup A^{\mathbb{Q}^{\ast }}$ and $%
A^{\mathbb{Q}_{+}}=\{1\}\cup A^{\mathbb{Q}_{+}^{\ast }}$.
\end{definition}

We will analyse GL($F_u$) given by (\ref{gls}) with the following Lemma.

\begin{lemma}
\label{GDR}Let $A=\{a_{i}\}_{i=1}^{n}\subset (0,1)$ for $n\in \mathbb{Z}%
_{+}^{\ast }:=\{1,2,\cdots \}$, and $\lambda _{j}\ (j=1,\cdots ,m)$ be
positive real numbers (not necessarily distinct). Let $\Theta =%
\mathop{\textstyle \bigcup }\limits_{j=1}^{m}\lambda _{j}A_{j}$ where $%
A_{j}\subset A^{\mathbb{Z}_{+}}$ for $1\leq j\leq m$.

$(i)$ Then $R_{\Theta }(\theta )\subset A^{\mathbb{Q}_{+}^{\ast }}$ for all $%
\theta \in \Theta $.

$(ii)$ If ${\lambda _{p}}/{\lambda _{q}}\notin A^{\mathbb{Q}}$ for all
distinct $p,q\in \{1,\mathbb{\cdots },m\}$ when $m\geq 2$, then for every
strictly decreasing geometric sequence $\{\theta ^{\prime
}r^{k}\}_{k=0}^{\infty }\subset \Theta $, there exists a unique $l\in \{1,%
\mathbb{\cdots },m\}$ such that $\{\theta ^{\prime }r^{k}\}_{k=0}^{\infty
}\subset \lambda _{l}A_{l}$, and
\begin{equation}
\theta ^{\prime }r^{k}\notin \lambda _{j}A_{j}\text{ \ for all }j\neq l\text{
and all }k\geq 0.  \label{51}
\end{equation}
\end{lemma}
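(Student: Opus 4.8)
The plan is to work directly with the multiplicative structure of $A^{\mathbb{Z}_+}$ and the ``rigidity'' of geometric sequences. For part $(i)$, suppose $r\in R_{\Theta}(\theta)$, so there is $\theta'\in\Theta$ with $\{\theta'r^k\}_{k=0}^\infty\subset\Theta$. Since $\Theta=\bigcup_j\lambda_jA_j$ is a finite union and the sequence is infinite, by pigeonhole infinitely many terms $\theta'r^{k}$, say for $k$ in an infinite set $N$, lie in a single $\lambda_jA_j$. Pick two such indices $k_1<k_2$ in $N$; then $\theta'r^{k_1}=\lambda_j a$ and $\theta'r^{k_2}=\lambda_j a'$ with $a,a'\in A_j\subset A^{\mathbb{Z}_+}$, whence $r^{k_2-k_1}=a'/a\in A^{\mathbb{Z}}$, i.e. $r^{k_2-k_1}=\prod a_i^{m_i}$ for some integer vector $(m_i)$. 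Taking the positive $(k_2-k_1)$-th root (working with logarithms to the base of the $a_i$, or just noting $r=(\prod a_i^{m_i})^{1/(k_2-k_1)}$) gives $r\in A^{\mathbb{Q}^{\ast}}$ with exponents $m_i/(k_2-k_1)$. It remains to check $r\in A^{\mathbb{Q}_+^{\ast}}$, i.e. the exponents can be taken nonnegative: this is where I would use $r\in(0,1)$ together with $a_i\in(0,1)$ — but this does not immediately force each exponent nonnegative, so the honest statement is that $r$ is a rational power of the product $\prod a_i^{m_i}$ which itself is $<1$; I expect the intended reading of $A^{\mathbb{Q}_+^\ast}$ here to be products with \emph{some} representation, and the cleanest route is: since $r<1$ and $r^{k_2-k_1}\in A^{\mathbb{Z}^\ast}\cap(0,1)$, we get $r\in A^{\mathbb{Q}^\ast}\cap(0,1)$, and one then argues (or it is built into the setup) that any element of $A^{\mathbb{Q}^\ast}$ lying in $(0,1)$ already lies in $A^{\mathbb{Q}_+^\ast}$ is \emph{false} in general — so more carefully, I would show $R_\Theta(\theta)\subset A^{\mathbb{Q}^\ast}$ always, and upgrade to $A^{\mathbb{Q}_+^\ast}$ using that $\theta\in\Theta$ is itself of the form $\lambda_j\prod a_i^{m_i}$ with $m_i\geq 0$, so $\theta'=\theta r^{-j_0}$ for the largest index with $\theta'r^{j_0}=\theta$ forces the comparison to happen inside one bag with a genuinely nonnegative-exponent tail.

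For part $(ii)$, fix a strictly decreasing geometric sequence $\{\theta'r^k\}_{k\ge0}\subset\Theta$. Exactly as above, pigeonhole produces an infinite set $N$ of indices with all $\theta'r^k$ ($k\in N$) in one common bag $\lambda_lA_l$. I first claim \emph{every} term lies in $\lambda_lA_l$. Indeed, take any $k_0\ge0$ and pick $k_1,k_2\in N$ with $k_1<k_0<k_2$ (possible since $N$ is infinite and unbounded, and we may also assume it has a small element by... actually we only need $k_0<k_2$ for one direction — handle $k_0$ below the minimum of $N$ separately using a ratio $r^{k_2-k_0}$). Suppose for contradiction $\theta'r^{k_0}\in\lambda_jA_j$ with $j\neq l$. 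Then $\theta'r^{k_0}=\lambda_j a$ and $\theta'r^{k_2}=\lambda_l a''$ with $a\in A_j$, $a''\in A_l$, so $\lambda_j/\lambda_l=(a''/a)\,r^{k_2-k_0}\in A^{\mathbb{Z}}\cdot A^{\mathbb{Q}}\subset A^{\mathbb{Q}}$ (since $r\in A^{\mathbb{Q}^\ast}$ by part $(i)$), contradicting the hypothesis $\lambda_j/\lambda_l\notin A^{\mathbb{Q}}$. Hence $\theta'r^{k}\in\lambda_lA_l$ for all $k\ge0$, which gives $\{\theta'r^k\}\subset\lambda_lA_l$; uniqueness of $l$ follows because if the sequence also sat inside $\lambda_{l'}A_{l'}$ with $l'\ne l$ then comparing any single term gives $\lambda_l/\lambda_{l'}=a/a'\in A^{\mathbb{Z}}\subset A^{\mathbb{Q}}$, again a contradiction. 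Finally \eqref{51} is just a restatement: for $j\ne l$ and any $k\ge0$, $\theta'r^k\notin\lambda_jA_j$, which is precisely what the contradiction argument established.

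The main obstacle is the bookkeeping in part $(i)$ to land in $A^{\mathbb{Q}_+^\ast}$ rather than merely $A^{\mathbb{Q}^\ast}$ — one must exploit that $\theta$ itself sits in some $\lambda_jA_j$ with a nonnegative integer exponent vector, and that along a \emph{decreasing} geometric sequence one can always pass to a tail $\{\theta r^k\}_{k\ge0}$ contained in $\Theta$; combining the two representations of a common term and dividing shows $r$ is a positive rational power of a ratio of two nonnegative-exponent products of the $a_i$, and because all $a_i<1$ and $r<1$ the net exponent vector can be normalised to be nonnegative. Once that lemma about ratios inside a single bag is in hand, part $(ii)$ is a short pigeonhole-plus-contradiction argument driven entirely by the incommensurability hypothesis $\lambda_p/\lambda_q\notin A^{\mathbb{Q}}$, and there is no further analytic difficulty.
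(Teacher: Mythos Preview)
Your argument for part $(ii)$ is essentially the same as the paper's and is correct: once you know $r\in A^{\mathbb{Q}^\ast}$ (even without the ``$+$''), the contradiction $\lambda_p/\lambda_q\in A^{\mathbb{Q}}$ follows exactly as you wrote.

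The genuine gap is in part $(i)$, and you correctly flag it yourself: getting $r\in A^{\mathbb{Q}^\ast}$ is easy, but upgrading to $r\in A^{\mathbb{Q}_+^\ast}$ is not. Your proposed fix --- ``because all $a_i<1$ and $r<1$ the net exponent vector can be normalised to be nonnegative'' --- is false. For instance with $a_1=1/2$, $a_2=1/3$ one has $a_1^{2}a_2^{-1}=3/4\in(0,1)$, so a ratio $a'/a$ of two elements of $A^{\mathbb{Z}_+}$ can lie in $(0,1)$ while having a strictly negative exponent. Nothing in your outline rules this out, and the other suggestions (passing to a tail starting at $\theta$, exploiting that $\theta$ itself has a nonnegative exponent vector) do not help either: dividing two arbitrary nonnegative-exponent words still produces arbitrary integer exponents.

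The paper's device is exactly the missing idea. Rather than picking \emph{any} two indices $k_1<k_2$ in the infinite set landing in $\lambda_lA_l$, write $\theta'r^{k_t}=\lambda_l\prod_i a_i^{m_{i,t}}$ for the whole infinite family and look at the exponent vectors $(m_{i,t})_i\in\mathbb{Z}_+^n$. By Dickson's lemma (the paper's Proposition~\ref{part}: any infinite subset of $\mathbb{Z}_+^n$ contains two vectors comparable in the coordinatewise order) there exist $p<q$ with $(m_{i,p})_i\le(m_{i,q})_i$. Then
\[
r^{k_q-k_p}=\prod_i a_i^{\,m_{i,q}-m_{i,p}}
\]
with every exponent $m_{i,q}-m_{i,p}\ge 0$, so $r=\prod_i a_i^{(m_{i,q}-m_{i,p})/(k_q-k_p)}\in A^{\mathbb{Q}_+^\ast}$. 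This is the step your proposal is missing; once you insert it, the rest of your write-up goes through.
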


Condition $(ii)$ in Lemma \ref{GDR} means that the sets $\{\lambda
_{j}A_{j}\}_{j=1}^{m}$ are disjoint.

\begin{proof}
$(i)$ Let $\theta \in \Theta $. Assume that $R_{\Theta }(\theta )\neq
\emptyset $. Let $r\in R_{\Theta }(\theta )$. By (\ref{R}), there exists $%
\theta ^{\prime }\in \Theta $ such that $\theta \in \{\theta ^{\prime
}r^{k}\}_{k=0}^{\infty }\subset \Theta $, so by the pigeonhole principle we
can find some $\lambda _{l}$ such that $\{\theta ^{\prime
}r^{k}\}_{k=0}^{\infty }\tbigcap \lambda _{l}A_{l}$ is infinite. Write this
infinite subsequence as
\begin{equation}
\theta ^{\prime }r^{k_{t}}=\lambda _{l}\tprod\limits_{i=1}^{n}a_{i}^{m_{i,t}}
\label{pq2}
\end{equation}%
where $(m_{i,t})_{i=1}^{n}\in \mathbb{Z}_{+}^{n}$ and $\{k_{t}\}\subset
\mathbb{Z}_{+}:=\{0,1,2,\cdots \}$ with $k_{t}<k_{t+1}$, for $t\in \mathbb{Z}%
_{+}$. Applying Proposition \ref{part} in the Appendix with $B=%
\{(m_{i,t})_{i=1}^{n}\}_{t\in \mathbb{Z}_{+}}$, there exist two distinct
vectors $(m_{i,p})_{i=1}^{n}$ and $(m_{i,q})_{i=1}^{n}$ in $\mathbb{Z}%
_{+}^{n}$ for some two indices $p<q$ in $\mathbb{Z}_{+}$, such that
\begin{equation}
(m_{i,p})_{i=1}^{n}\leq (m_{i,q})_{i=1}^{n}  \label{pq1}
\end{equation}%
under the partial order defined by inequality of all coordinates. Therefore,
we have by (\ref{pq2})
\begin{equation*}
r^{k_{q}-k_{p}}=\frac{\theta ^{\prime }r^{k_{q}}}{\theta ^{\prime }r^{k_{p}}}%
=\frac{\lambda _{l}\tprod\limits_{i=1}^{n}a_{i}^{m_{i,q}}}{\lambda
_{l}\tprod\limits_{i=1}^{n}a_{i}^{m_{i,p}}}=\tprod%
\limits_{i=1}^{n}a_{i}^{m_{i,q}-m_{i,p}}\text{ \ \big(or }r=\tprod%
\limits_{i=1}^{n}a_{i}^{(m_{i,q}-m_{i,p})/(k_{q}-k_{p})}\text{\big).}
\end{equation*}%
Since%
\begin{equation*}
\left( \frac{m_{i,q}-m_{i,p}}{k_{q}-k_{p}}\right) _{i=1}^{n}\in (\mathbb{Q}%
_{+}^{n})^{\ast }
\end{equation*}%
using (\ref{pq1}), it follows that $r\in A^{\mathbb{Q}_{+}^{\ast }}$ by
definition. Therefore,%
\begin{equation*}
R_{\Theta }(\theta )\subset A^{\mathbb{Q}_{+}^{\ast }}
\end{equation*}%
for all $\theta \in \Theta $, thus proving our assertion $(i)$.

$(ii)$ For $m\geq 2$, suppose that there exist distinct $p,q\in \{1,\mathbb{%
\cdots },m\}$ such that $\theta ^{\prime }r^{k}\in \lambda _{p}A_{p}$ and $%
\theta ^{\prime }r^{j}\in \lambda _{q}A_{q}$ for some $k,j\in \mathbb{Z}_{+}$%
. Write%
\begin{equation}
\theta ^{\prime }r^{k}=\lambda _{p}\tprod\limits_{i=1}^{n}a_{i}^{p_{i,k}}%
\text{ \ and \ }\theta ^{\prime }r^{j}=\lambda
_{q}\tprod\limits_{i=1}^{n}a_{i}^{q_{i,j}}\text{ \ (}p_{i,k},q_{i,j}\in
\mathbb{Z}_{+}\text{).}  \label{53}
\end{equation}%
By $(i)$, $r\in R_{\Theta }(\theta ^{\prime })\subset A^{\mathbb{Q}%
_{+}^{\ast }}$ since $\theta ^{\prime }\in \Theta $, and so $r^{k-j}\in A^{%
\mathbb{Q}}$. It follows that%
\begin{equation*}
\frac{\lambda _{p}}{\lambda _{q}}=r^{k-j}\tprod%
\limits_{i=1}^{n}a_{i}^{q_{i,j}-p_{i,k}}\in A^{\mathbb{Q}}A^{\mathbb{Q}}=A^{%
\mathbb{Q}},
\end{equation*}%
leading to a contradiction to our assumption. Thus, there exists a unique
integer $l\in \{1,\mathbb{\cdots },m\}$ such that $\{\theta ^{\prime
}r^{k}\}_{k=0}^{\infty }\subset \lambda _{l}A_{l}.$

It remains to show (\ref{51}). In fact, if (\ref{51}) were not true, then $%
\theta ^{\prime }r^{k}\in \lambda _{t}A_{t}$ for some integer $k\geq 0$ and
some $t\neq l$. Taking $p=l$, $j=k$, $q=t$ in (\ref{53}), we would have%
\begin{equation*}
\frac{\lambda _{l}}{\lambda _{t}}=\tprod%
\limits_{i=1}^{n}a_{i}^{q_{i,k}-p_{i,k}}\in A^{\mathbb{Q}},
\end{equation*}%
leading to a contradiction. The assertion (\ref{51}) follows.
\end{proof}

The following corollary will be used to describe a certain `homogeneity' property of
(the gap length sets of) attractors of COSC standard IFSs.

\begin{corollary}
\label{SSR} Let $X\subset (0,1)$ and $\Lambda \subset (0,\infty )$ be two
finite sets. Then
\begin{equation*}
X^{\mathbb{Z}_{+}^{\ast }}\subset R_{\Lambda X^{\mathbb{Z}_{+}}}(\theta
)\subset X^{\mathbb{Q}_{+}^{\ast }}
\end{equation*}%
for every $\theta \in \Lambda X^{\mathbb{Z}_{+}}$.
\end{corollary}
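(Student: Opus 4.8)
The plan is to derive Corollary~\ref{SSR} directly from Lemma~\ref{GDR} by specialising to the one-vertex / single-$\lambda_j$ picture. First I would prove the right-hand inclusion $R_{\Lambda X^{\mathbb{Z}_+}}(\theta)\subset X^{\mathbb{Q}_+^{\ast}}$ by writing $\Lambda=\{\lambda_1,\dots,\lambda_m\}$ and $\Theta=\Lambda X^{\mathbb{Z}_+}=\bigcup_{j=1}^m\lambda_j X^{\mathbb{Z}_+}$, which is exactly of the form $\bigcup_{j=1}^m\lambda_j A_j$ required in Lemma~\ref{GDR} with $A=X$ and $A_j=X^{\mathbb{Z}_+}\subset X^{\mathbb{Z}_+}$ for every $j$. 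Part $(i)$ of that lemma then gives $R_{\Theta}(\theta)\subset X^{\mathbb{Q}_+^{\ast}}$ for every $\theta\in\Theta$, which is the upper bound.

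For the left-hand inclusion $X^{\mathbb{Z}_+^{\ast}}\subset R_{\Lambda X^{\mathbb{Z}_+}}(\theta)$, the idea is to exhibit, for each fixed $\theta\in\Lambda X^{\mathbb{Z}_+}$ and each $r\in X^{\mathbb{Z}_+^{\ast}}$, an explicit strictly decreasing geometric sequence with common ratio $r$ that lies in $\Lambda X^{\mathbb{Z}_+}$ and contains $\theta$. Write $\theta=\lambda_j\prod_i a_i^{s_i}$ with $(s_i)_i\in\mathbb{Z}_+^n$ (where $X=\{a_1,\dots,a_n\}$), and write $r=\prod_i a_i^{t_i}$ with $(t_i)_i$ a nonzero vector of nonnegative integers. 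Then for $k\ge 0$ set $\theta r^k=\lambda_j\prod_i a_i^{s_i+kt_i}$; since each exponent $s_i+kt_i$ is a nonnegative integer, $\theta r^k\in\lambda_j X^{\mathbb{Z}_+}\subset\Lambda X^{\mathbb{Z}_+}$, and since $r<1$ (as $X\subset(0,1)$ and the $t_i$ are nonnegative and not all zero) the sequence $\{\theta r^k\}_{k=0}^{\infty}$ is strictly decreasing. By definition~\eqref{R} this shows $r\in R_{\Lambda X^{\mathbb{Z}_+}}(\theta)$, and as $r\in X^{\mathbb{Z}_+^{\ast}}$ was arbitrary we get $X^{\mathbb{Z}_+^{\ast}}\subset R_{\Lambda X^{\mathbb{Z}_+}}(\theta)$.

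I do not expect a genuine obstacle here: the statement is essentially a bookkeeping corollary that packages Lemma~\ref{GDR}$(i)$ together with the trivial observation that $X^{\mathbb{Z}_+}$ is closed under multiplication by elements of $X^{\mathbb{Z}_+^{\ast}}$. The only point requiring a little care is making sure the ``$k=0$ term equals $\theta$'' requirement of~\eqref{R} is met --- which it is, since $\theta r^0=\theta$ --- and checking that $r\in(0,1)$ genuinely holds so that the sequence is strictly decreasing rather than constant; this uses both $X\subset(0,1)$ and the nonvanishing of the exponent vector defining $r$, i.e.\ that $r\in X^{\mathbb{Z}_+^{\ast}}$ and not merely $r\in X^{\mathbb{Z}_+}$.
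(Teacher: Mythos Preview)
Your proposal is correct and follows essentially the same approach as the paper: the right-hand inclusion is obtained by applying Lemma~\ref{GDR}$(i)$ with $A=X$ and $A_j=X^{\mathbb{Z}_+}$, and the left-hand inclusion is obtained by taking $\theta'=\theta$ and noting that $\theta r^k\in\Lambda X^{\mathbb{Z}_+}$ for every $k\ge 0$. The paper phrases the left-hand inclusion in set-product notation ($\theta X^{\mathbb{Z}_+^{\ast}}\subset\Lambda X^{\mathbb{Z}_+}$) rather than writing out explicit exponents, but the content is identical.
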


\begin{proof}
Let $\theta \in \Lambda X^{\mathbb{Z}_{+}}$. Since $X^{\mathbb{Z}_{+}^{\ast
}}\subset $ $X^{\mathbb{Z}_{+}}$, %
\begin{equation*}
\theta X^{\mathbb{Z}_{+}^{\ast }}\subset (\Lambda X^{\mathbb{Z}_{+}})X^{%
\mathbb{Z}_{+}}=\Lambda (X^{\mathbb{Z}_{+}}X^{\mathbb{Z}_{+}})=\Lambda X^{%
\mathbb{Z}_{+}}.
\end{equation*}%
For any $r\in X^{\mathbb{Z}_{+}^{\ast }}$ and $k\in \mathbb{Z}_{+}$, we have
$r^{k}\in X^{\mathbb{Z}_{+}}$ and so
\begin{equation*}
\theta r^{k}\in \left( \Lambda X^{\mathbb{Z}_{+}}\right) X^{\mathbb{Z}%
_{+}}=\Lambda X^{\mathbb{Z}_{+}},
\end{equation*}%
thus showing that $r\in R_{\Lambda X^{\mathbb{Z}_{+}}}(\theta )$ by
definition (\ref{R}) with $\Theta =\Lambda X^{\mathbb{Z}_{+}}$, so the first
inclusion follows.

The second inclusion also follows by taking $A=X$, $\lambda _{j}\in \Lambda $
and each $A_{j}=X^{\mathbb{Z}_{+}}$ in Lemma \ref{GDR}$(i)$ (so that $\Theta
=\Lambda X^{\mathbb{Z}_{+}}$).
\end{proof}

As an application of Lemma \ref{GDR} and Corollary \ref{SSR}, we derive a
key lemma that will be used to distinguish the attractor of a COSC GD-IFS
from that of a COSC standard IFS.

\begin{definition}[Absolute contraction ratio set]
The \emph{absolute contraction ratio set} of a GD-IFS is defined to be the
set of the absolute values of the contraction ratios of the similarities,
that is $\{|\rho _{e}|:e\in E\}$.
\end{definition}

\begin{lemma}
\label{Lemma 2.9}Let $(F_{v})_{v\in V}$ be the attractors of a COSC GD-IFS
based on a digraph with $d_{v}\geq 2$ for all $v\in V$, with absolute
contraction ratio set $A$. Assume that for some $u$, the set $F_{u}$ is not
an interval (or a singleton) and is the attractor of some COSC standard IFS
with absolute contraction ratio set $X$.

\noindent $(i)$ Then for all $\theta \in \mathrm{GL}(F_{u})$
\begin{equation}
X\subset X^{\mathbb{Z}_{+}^{\ast }}\subset \ R_{\mathrm{GL}(F_{u})}(\theta
)\ \subset \ A^{\mathbb{Q}_{+}^{\ast }}\cap X^{\mathbb{Q}_{+}^{\ast }}.
\label{incl}
\end{equation}

\noindent $(ii)$ If $A_{1}\cup A_{2}=A$ and $A_{1}^{\mathbb{Q}^{\ast }}\cap
A_{2}^{{}^{\mathbb{Q}_{+}^{\ast }}}=\emptyset $, then the following
dichotomy is true: \emph{either}
\begin{equation}
R_{\mathrm{GL}(F_{u})}(\theta )\cap A_{1}^{\mathbb{Q}_{+}^{\ast }}\neq
\emptyset \text{ for all }\theta \in \mathrm{GL}(F_{u}),  \label{31}
\end{equation}%
\emph{or}
\begin{equation}
R_{\mathrm{GL}(F_{u})}(\theta )\cap A_{1}^{\mathbb{Q}_{+}^{\ast }}=\emptyset
\text{ \ for all }\theta \in \mathrm{GL}(F_{u}).  \label{32}
\end{equation}
\end{lemma}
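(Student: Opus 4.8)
The plan is to compute the gap length set $\mathrm{GL}(F_{u})$ in two different ways --- once regarding $F_{u}$ as a GD-attractor and once regarding it as a self-similar set --- and then to play the two descriptions against each other through Lemma \ref{GDR}$(i)$ and Corollary \ref{SSR}.

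First I would record the GD-side description. By Proposition \ref{P2}, $\mathrm{GL}(F_{u})=\Lambda_{u}\cup\bigcup_{m\ge 1}\bigcup_{v\in V}\Lambda_{v}\,\{\,|\rho_{\mathbf{e}}|:\mathbf{e}\text{ a directed path }u\to v\text{ of length }m\,\}$. Each $|\rho_{\mathbf{e}}|$ is a product of absolute edge ratios, hence lies in $A^{\mathbb{Z}_{+}^{\ast}}\subset A^{\mathbb{Z}_{+}}$, while $1\in A^{\mathbb{Z}_{+}}$ absorbs the $\Lambda_{u}$-term; and this is a union of finitely many terms, since $V$ and each $\Lambda_{v}$ are finite. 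Thus $\mathrm{GL}(F_{u})$ has exactly the form $\bigcup_{j}\lambda_{j}A_{j}$ with $A_{j}\subset A^{\mathbb{Z}_{+}}$ demanded in Lemma \ref{GDR}, so $R_{\mathrm{GL}(F_{u})}(\theta)\subset A^{\mathbb{Q}_{+}^{\ast}}$ for every $\theta$. Next I would record the self-similar-side description: applying Proposition \ref{P2} to the given COSC standard IFS with attractor $F_{u}$ (a one-vertex GD-IFS, which has at least two maps since $F_{u}$ is not a singleton), the level-$m$ gap-length sets are $\Lambda\cdot\{\,m\text{-fold products of elements of }X\,\}$, and the union over all $m\ge 0$ collapses everything to $\mathrm{GL}(F_{u})=\Lambda X^{\mathbb{Z}_{+}}$, where $\Lambda$ is the set of basic-gap lengths of that IFS --- finite, and nonempty because $F_{u}$ is not an interval. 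Corollary \ref{SSR} then yields $X^{\mathbb{Z}_{+}^{\ast}}\subset R_{\mathrm{GL}(F_{u})}(\theta)\subset X^{\mathbb{Q}_{+}^{\ast}}$ for every $\theta\in\mathrm{GL}(F_{u})$. Combining these with the trivial inclusion $X\subset X^{\mathbb{Z}_{+}^{\ast}}$ gives all of (\ref{incl}), proving $(i)$.

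For $(ii)$ I would argue by contraposition: suppose (\ref{32}) fails, so there are $\theta_{0}\in\mathrm{GL}(F_{u})$ and $r\in R_{\mathrm{GL}(F_{u})}(\theta_{0})\cap A_{1}^{\mathbb{Q}_{+}^{\ast}}$, and I will deduce (\ref{31}). By $(i)$, $r\in X^{\mathbb{Q}_{+}^{\ast}}$; writing $r$ as a monomial in the elements of $X$ with nonnegative rational exponents and clearing denominators produces an integer $N\ge 1$ with $r^{N}\in X^{\mathbb{Z}_{+}^{\ast}}$, and since $r\in A_{1}^{\mathbb{Q}_{+}^{\ast}}$ likewise forces $r^{N}\in A_{1}^{\mathbb{Q}_{+}^{\ast}}$, we get $r^{N}\in X^{\mathbb{Z}_{+}^{\ast}}\cap A_{1}^{\mathbb{Q}_{+}^{\ast}}$. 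But by $(i)$, $X^{\mathbb{Z}_{+}^{\ast}}\subset R_{\mathrm{GL}(F_{u})}(\theta)$ for \emph{every} $\theta\in\mathrm{GL}(F_{u})$, so $r^{N}\in R_{\mathrm{GL}(F_{u})}(\theta)\cap A_{1}^{\mathbb{Q}_{+}^{\ast}}\neq\emptyset$ for all such $\theta$, which is exactly (\ref{31}). The hypotheses $A_{1}\cup A_{2}=A$ and $A_{1}^{\mathbb{Q}^{\ast}}\cap A_{2}^{\mathbb{Q}_{+}^{\ast}}=\emptyset$ make the two alternatives genuinely exclusive and reflect a true ``$A_{1}$-type versus $A_{2}$-type'' split of $A^{\mathbb{Q}_{+}^{\ast}}$, and are what is invoked when the lemma is applied later.

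I expect the main obstacle to be the bookkeeping in the two gap-length computations --- in particular, verifying that the standard-IFS gap length set really is $\Lambda X^{\mathbb{Z}_{+}}$ with $\Lambda$ finite and nonempty (so that Corollary \ref{SSR} applies verbatim), and that the GD-IFS gap length set genuinely fits the template of Lemma \ref{GDR} with one fixed finite set $A$ and finitely many positive weights $\lambda_{j}$. Once both representations are secured, the remaining steps --- the trivial inclusions, the short ``raise to an integer power'' step, and the propagation of non-emptiness to all $\theta$ via $X^{\mathbb{Z}_{+}^{\ast}}\subset R_{\mathrm{GL}(F_{u})}(\theta)$ --- are routine.
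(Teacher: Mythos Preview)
Your proof of $(i)$ is essentially identical to the paper's: both compute $\mathrm{GL}(F_u)$ from the GD-side via Proposition \ref{P2} and feed it into Lemma \ref{GDR}$(i)$ to get $R_{\mathrm{GL}(F_u)}(\theta)\subset A^{\mathbb{Q}_+^{\ast}}$, then compute $\mathrm{GL}(F_u)=\Lambda X^{\mathbb{Z}_+}$ from the one-vertex side and invoke Corollary \ref{SSR} for the two $X$-inclusions.

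For $(ii)$ your route is genuinely different from, and shorter than, the paper's. The paper argues by a case split on whether $X\cap A_1^{\mathbb{Q}_+^{\ast}}$ is empty; in the empty case it uses the hypotheses $A_1\cup A_2=A$ and $A_1^{\mathbb{Q}^{\ast}}\cap A_2^{\mathbb{Q}_+^{\ast}}=\emptyset$ to establish the disjoint decomposition $A^{\mathbb{Q}_+^{\ast}}=A_1^{\mathbb{Q}_+^{\ast}}\cup A_1^{\mathbb{Q}_+}A_2^{\mathbb{Q}_+^{\ast}}$, pushes $X$ (hence $X^{\mathbb{Q}_+^{\ast}}$, hence $R_{\mathrm{GL}(F_u)}(\theta)$) into the second piece, and concludes (\ref{32}). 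Your contrapositive argument bypasses all of this: given any $r\in R_{\mathrm{GL}(F_u)}(\theta_0)\cap A_1^{\mathbb{Q}_+^{\ast}}$, part $(i)$ forces $r\in X^{\mathbb{Q}_+^{\ast}}$, a suitable power $r^N$ lands in $X^{\mathbb{Z}_+^{\ast}}\cap A_1^{\mathbb{Q}_+^{\ast}}$, and the inclusion $X^{\mathbb{Z}_+^{\ast}}\subset R_{\mathrm{GL}(F_u)}(\theta)$ from $(i)$ propagates this witness to every $\theta$. This is correct, and in fact shows that the extra hypotheses on $A_1,A_2$ are not needed for the dichotomy as stated --- your closing remark that they ``make the two alternatives genuinely exclusive'' is slightly off (exclusivity is automatic once $\mathrm{GL}(F_u)\neq\emptyset$); rather, those hypotheses are used only downstream in Lemma \ref{main thm}, where the decomposition $A^{\mathbb{Q}_+^{\ast}}=A_1^{\mathbb{Q}_+^{\ast}}\sqcup A_1^{\mathbb{Q}_+}A_2^{\mathbb{Q}_+^{\ast}}$ is exploited directly. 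What the paper's longer argument buys is the sharper statement that the alternative is governed by whether $X\cap A_1^{\mathbb{Q}_+^{\ast}}$ is empty, but this refinement is not needed anywhere else.
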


The assertion $(ii)$ of Lemma \ref{Lemma 2.9} gives a necessary condition
that a COSC GD-attractor $F_{u}$ is also the attractor of some COSC standard
IFS in the following way: if there exists two elements $\theta _{1},\theta
_{2}\in \mathrm{GL}(F_{u})$ such that (\ref{31}) holds for $\theta _{1}$
whilst (\ref{32}) holds for $\theta _{2}$, then $F_{u}$ is not the attractor
of any COSC standard IFS. This assertion will be used in Lemma \ref{main thm}
below.

\begin{proof}
$(i)$ Let $\Lambda $ be the set of nonzero basic gap lengths of some COSC
standard IFS with the attractor $F_{u}$, and let $X$ be the absolute
contraction ratio set. Regard this standard IFS as a GD-IFS based on $%
(\{v\},\{e_{j}\}_{j=1}^{m})$ where $e_{j}$ are loops of the single vertex $v$%
, all directed paths of length $k\geq 1$ are now $e_{i_{1}}e_{i_{2}}\cdots
e_{i_{k}}$ where $i_{l}=1,2,\cdots ,m$ for all $l=1,2,\cdots ,k$. By (\ref%
{gls}),
\begin{eqnarray*}
\mathrm{GL}(F_{u}) &=&\Lambda \mathop{\textstyle \bigcup } \Big( %
\mathop{\textstyle \bigcup }\limits_{m=1}^{\infty }\Lambda \big\{|\rho _{%
\mathbf{e}}|:\mathbf{e}\text{ is a directed path from }v\text{ to $v$ with
length }m\big\}\Big) \\
&=&\Lambda \cup \Lambda X^{\mathbb{Z}_{+}^{\ast }}=\Lambda X^{\mathbb{Z}%
_{+}}.
\end{eqnarray*}%
Note that $\mathrm{GL}(F_{u})$ is non-empty by using our assumption that $%
F_{u}$ is not an interval or a singleton.

On the other hand, Corollary \ref{SSR} implies that
\begin{equation}
X^{\mathbb{Z}_{+}^{\ast }}\subset R_{\mathrm{GL}(F_{u})}(\theta )=R_{\Lambda
X^{\mathbb{Z}_{+}}}(\theta )\subset X^{\mathbb{Q}_{+}^{\ast }}  \label{71}
\end{equation}%
for all $\theta \in \mathrm{GL}(F_{u})$.
Recall that a directed circuit containing $u$ is a directed path from $u$ to
$u$. We write the union given by (\ref{gls}) as
\begin{eqnarray}
\Theta \coloneqq\mathrm{GL}(F_{u}) &=&\Big(\mathop{\textstyle \bigcup }%
_{\lambda \in \Lambda _{u}}\lambda \left( \{1\}\mathop{\textstyle \bigcup }%
\big\{|\rho _{\mathbf{e}}|:\mathbf{e}\text{ is a directed circuit containing
$u$}\big\}\right) \Big)  \notag \\
&\;&\mathop{\textstyle \bigcup }\Big(\mathop{\textstyle \bigcup }_{v\in
V\setminus \{u\}}\mathop{\textstyle \bigcup }_{\lambda \in \Lambda
_{v}}\lambda \big\{|\rho _{\mathbf{e}}|:\mathbf{e}\text{ is a directed path
from $u$ to $v$}\big\}\Big).  \label{doubleunion}
\end{eqnarray}%
Since the absolute contraction ratios are all in $A$ (so that $|\rho _{%
\mathbf{e}}|\in A^{\mathbb{Z}_{+}}$), it follows from Lemma \ref{GDR}($i$)
that $R_{\mathrm{GL}(F_{u})}(\theta )\subset A^{\mathbb{Q}_{+}^{\ast }}$ for
all $\theta \in \mathrm{GL}(F_{u})$, which combines with (\ref{71}) to give
that%
\begin{equation*}
R_{\mathrm{GL}(F_{u})}(\theta )\in A^{\mathbb{Q}_{+}^{\ast }}\cap X^{\mathbb{%
Q}_{+}^{\ast }},
\end{equation*}%
leading to the inclusions in (\ref{incl}), as desired.

$(ii)$ If $X\cap A_{1}^{\mathbb{Q}_{+}^{\ast }}\neq \emptyset $, it follows
from (\ref{incl}) that
\begin{equation*}
X\cap A_{1}^{\mathbb{Q}_{+}^{\ast }}\newline
\subset R_{\mathrm{GL}(F_{u})}(\theta )\cap A_{1}^{\mathbb{Q}_{+}^{\ast }}
\end{equation*}%
for all $\theta \in \mathrm{GL}(F_{u})$, thus showing that (\ref{31}) is
true.

Now assume that $X\cap A_{1}^{\mathbb{Q}_{+}^{\ast }}=\emptyset $%
. We will show that (\ref{32}) is true.

We first claim that $A^{\mathbb{Q}_{+}^{\ast }}$ is the union of two
disjoint sets $A_{1}^{\mathbb{Q}_{+}^{\ast }}$ and $A_{1}^{\mathbb{Q}%
_{+}}A_{2}^{\mathbb{Q}_{+}^{\ast }}$. To see this, as $A_{1}^{\mathbb{Q}%
^{\ast }}\cap A_{2}^{{}^{\mathbb{Q}_{+}^{\ast }}}=\emptyset $ by assumption,
it follows that
\begin{equation}
A_{1}^{\mathbb{Q}_{+}^{\ast }}\cap A_{1}^{\mathbb{Q}_{+}}A_{2}^{\mathbb{Q}%
_{+}^{\ast }}=\emptyset .  \label{2.15}
\end{equation}%
In fact, if (\ref{2.15}) were not true, there would exist three elements$\ $%
\begin{equation*}
a\in A_{1}^{\mathbb{Q}_{+}^{\ast }},\ b\in A_{1}^{\mathbb{Q}_{+}},\ c\in
A_{2}^{\mathbb{Q}_{+}^{\ast }}
\end{equation*}%
with $a=bc$, from which $\frac{a}{b}\in A_{1}^{\mathbb{Q}}$ and $%
\frac{a}{b}=c\in A_{2}^{\mathbb{Q}_{+}^{\ast }}$. As $A_{1}^{\mathbb{Q}%
}=\{1\}\cup A_{1}^{\mathbb{Q}^{\ast }}$ by definition and $\{1\}\cap A_{2}^{%
\mathbb{Q}_{+}^{\ast }}=\emptyset $ due to $A_{2}^{\mathbb{Q}_{+}^{\ast
}}\subset (0,1)$, we see that
\begin{equation*}
\frac{a}{b}\in A_{1}^{\mathbb{Q}}\cap A_{2}^{\mathbb{Q}_{+}^{\ast }}=\left(
\{1\}\cup A_{1}^{\mathbb{Q}^{\ast }}\right) \cap A_{2}^{\mathbb{Q}_{+}^{\ast
}}=\left( \{1\}\cap A_{2}^{\mathbb{Q}_{+}^{\ast }}\right) \cup \left( A_{1}^{%
\mathbb{Q}^{\ast }}\cap A_{2}^{{}^{\mathbb{Q}_{+}^{\ast }}}\right)
=\emptyset ,
\end{equation*}%
a contradiction.

We need to show
\begin{equation}
A^{\mathbb{Q}_{+}^{\ast }}=A_{1}^{\mathbb{Q}_{+}^{\ast }}\cup A_{1}^{\mathbb{%
Q}_{+}}A_{2}^{\mathbb{Q}_{+}^{\ast }}.  \label{33}
\end{equation}%
In fact, let
\begin{equation*}
A_{1}=\{b_{i}\}_{i=1}^{m},\ A_{2}=\{c_{j}\}_{j=1}^{n}.
\end{equation*}%
As $A_{1}\cap A_{2}\subset A_{1}^{\mathbb{Q}^{\ast }}\cap A_{2}^{\mathbb{Q}%
_{+}^{\ast }}=\emptyset $, any element $a\in A^{\mathbb{Q}_{+}^{\ast
}}=(A_{1}\cup A_{2})^{\mathbb{Q}_{+}^{\ast }}$ can be written as
\begin{equation*}
a=\tprod\limits_{i=1}^{m}b_{i}^{p_{i}}\tprod\limits_{j=1}^{n}c_{j}^{q_{j}}%
\text{ }\ \text{for some }(p_{i})_{i=1}^{m}\in \mathbb{Q}_{+}^{m}\text{ and }%
(q_{j})_{j=1}^{n}\in \mathbb{Q}_{+}^{n},
\end{equation*}%
where not all $p_{i},q_{j}$ are zero. Thus, if all $q_{j}$ are zero,
then $a=\tprod\limits_{i=1}^{m}b_{i}^{p_{i}}\in A_{1}^{\mathbb{Q}_{+}^{\ast
}}$; otherwise $a\in A_{1}^{\mathbb{Q}_{+}}A_{2}^{\mathbb{Q}_{+}^{\ast }}$.
This proves (\ref{33}) by using (\ref{2.15}).

As $X\cap A_{1}^{\mathbb{Q}_{+}^{\ast }}=\emptyset $ and%
\begin{equation*}
X\subset A^{\mathbb{Q}_{+}^{\ast }}=A_{1}^{\mathbb{Q}_{+}^{\ast }}\cup
A_{1}^{\mathbb{Q}_{+}}A_{2}^{\mathbb{Q}_{+}^{\ast }}
\end{equation*}%
by using (\ref{incl}) and (\ref{33}), we have
\begin{equation}
X\subset A_{1}^{\mathbb{Q}_{+}}A_{2}^{\mathbb{Q}_{+}^{\ast }}.  \label{35}
\end{equation}%
We will show the following inclusion
\begin{equation}
X^{\mathbb{Q}_{+}^{\ast }}\subset A_{1}^{\mathbb{Q}_{+}}A_{2}^{\mathbb{Q}%
_{+}^{\ast }}.  \label{34}
\end{equation}%
Since $X$ is finite, let
\begin{equation*}
X=\{x_{l}\}_{l=1}^{k},\ A_{1}=\{b_{i}\}_{i=1}^{m},\
A_{2}=\{c_{j}\}_{j=1}^{n}.
\end{equation*}%
By (\ref{35}), we write for each $l=1,2,\cdots ,k$
\begin{equation*}
x_{l}=\tprod\limits_{i=1}^{m}b_{i}^{p_{i,l}}\tprod%
\limits_{j=1}^{n}c_{j}^{q_{j,l}}\text{ \ for some }(p_{i,l})_{i=1}^{m}\in
\mathbb{Q}_{+}^{m}\text{ and }(q_{j,l})_{j=1}^{n}\in (\mathbb{Q}%
_{+}^{n})^{\ast }.
\end{equation*}%
Then any element $x\in X^{\mathbb{Q}_{+}^{\ast }}$ can be written as
\begin{equation*}
x=\tprod_{l=1}^{k}x_{l}^{r_{l}}=\tprod_{l=1}^{k}\left(
\tprod\limits_{i=1}^{m}b_{i}^{p_{i,l}}\tprod\limits_{j=1}^{n}c_{j}^{q_{j,l}}%
\right)
^{r_{l}}=\tprod\limits_{i=1}^{m}b_{i}^{\sum_{l=1}^{k}p_{i,l}r_{l}}\tprod%
\limits_{j=1}^{n}c_{j}^{\sum_{l=1}^{k}q_{j,l}r_{l}}\text{ \ for some }%
(r_{l})_{l=1}^{k}\in \left( \mathbb{Q}_{+}^{k}\right) ^{\ast }.
\end{equation*}%
Note that the numbers $\sum_{l=1}^{k}p_{i,l}r_{l}$ and $%
\sum_{l=1}^{k}q_{j,l}r_{l}$ all belong to $\mathbb{Q}_{+}$. Since $%
r_{l^{\prime }}>0$ for some $l^{\prime }$ while $q_{j^{\prime },l^{\prime
}}>0$ for this $l^{\prime }$ and some $j^{\prime }$, we have $%
\sum_{l=1}^{k}q_{j^{\prime },l}r_{l}>0$ for this $j^{\prime }$. Therefore,
we obtain (\ref{34}).

Finally, by (\ref{incl}) and (\ref{34}), we have for all $\theta \in \mathrm{%
GL}(F_{u})$,
\begin{equation*}
R_{\mathrm{GL}(F_{u})}(\theta )\subset X^{\mathbb{Q}_{+}^{\ast }}\subset
A_{1}^{\mathbb{Q}_{+}}A_{2}^{\mathbb{Q}_{+}^{\ast }},
\end{equation*}%
from which, we easily conclude that (\ref{32}) holds by using (\ref{2.15}).
\end{proof}

\section{Construction of GD-IFSs}

\label{Sect3}We will construct COSC (CSSC) GD-IFSs in terms of vector sets
in Euclidean spaces, to analyse the existence and extent of \emph{non-trivial%
} GD-IFSs whose attractors are not attractors of any
(COSC) standard IFS.

For a digraph $G=(V,E)$ with $d_{i}\geq 2$ for $i\in V=\{1,2,\cdots ,N\}$,
we set
\begin{equation}
n:=2\#E-\#V=2(d_{1}+d_{2}+\cdots +d_{N})-N  \label{nn}
\end{equation}%
so that $n\geq N$ (recall that $d_{i}$ denotes the number of the edges
leaving vertex $i$). Define the subset $P_{0}$ in the Euclidean space $%
\mathbb{R}
^{n}$, with $n$ given in (\ref{nn}), by%
\begin{eqnarray}
&&P_{0}\coloneqq\left\{ x=(x_{1}^{(1)},\mathbb{\cdots }%
,x_{1}^{(d_{1})},x_{2}^{(1)},\mathbb{\cdots },x_{2}^{(d_{2})},\cdots
,x_{N}^{(1)},\mathbb{\cdots },x_{N}^{(d_{N})},\xi _{1}^{(1)},\mathbb{\cdots }%
,\xi _{1}^{(d_{1}-1)},\cdots ,\xi _{N}^{(1)},\mathbb{\cdots },\xi
_{N}^{(d_{N}-1)})\right.  \notag \\
&&\left. \text{ where }x_{i}^{(k)},x_{i}^{(d_{i})}\in (-1,1)\setminus \{0\}%
\text{ and }\xi _{i}^{(k)}\geq 0\text{ for each vertex }i\in V\text{, }1\leq
k\leq d_{i}-1\right\} .  \label{pp}
\end{eqnarray}%
Each vector $x$ in $P_{0}$ consists of two kinds of entries: the entries $%
\{x_{i}^{(k)}\}_{i\in V,1\leq k\leq d_{i}}$ all lie in the set $%
(-1,1)\setminus \{0\}$, and will specify the contraction ratios of GD-IFSs to
be constructed, whilst the other entries $\{\xi _{i}^{(k)}\}_{i\in V,1\leq
k\leq d_{i}-1}$ are all non-negative, and will specify the basic gap lengths.

For vertex $i\in V$, let $\{e_{i}(k):1\leq k\leq d_{i}\}$ be the set of
edges leaving $i$, which are arranged in some order which will henceforth
remain fixed. For a point $x$ in $P_{0}$, we look at its entries $%
\{x_{i}^{(k)}\}_{i\in V,1\leq k\leq d_{i}}$ and define an $N\times N$ matrix
$M_{x}(s)$ for any $s>0$ by
\begin{equation}
M_{x}(s)=\left( M_{ij}(s)\right) _{1\leq i,j\leq N},  \label{mx}
\end{equation}%
where%
\begin{equation}
M_{ij}(s)=\sum\limits_{e_{i}(k)\in E_{ij}}|x_{i}^{(k)}|^{s}  \label{mij}
\end{equation}%
if $E_{ij}\neq \emptyset $, and $M_{ij}(s)=0$ if $E_{ij}=\emptyset $ (recall
that $E_{ij}$ is the set of (multiple) edges from vertex $i$ to vertex
$j$).

Let $\mathcal{b}$, $\ell $ be two vectors defined by
\begin{eqnarray}
\mathcal{b} &\coloneqq&(b_{i}^{(1)})_{i\in V}\text{ \ where }b_{i}^{(1)}\in
\mathbb{R},  \label{bb2} \\
\ell &\coloneqq&(l_{i})_{i\in V}\text{ \ where }l_{i}\geq 0.  \label{LL2}
\end{eqnarray}%
For each edge $e_{i}(k)$ ($1\leq k\leq d_{i}$) leaving vertex $i\in V$, we
define the mappings associated with a point $x$ in $P_{0}$ by
\begin{equation}
S_{e_{i}(k)}(t)=x_{i}^{(k)}(t-b_{\omega
(e_{i}(k))}^{(1)})+b_{i}^{(k)}-x_{i}^{(k)}l_{\omega (e_{i}(k))}\mathbf{1}%
_{\{x_{i}^{(k)}<0\}}\text{ \ for a variable }t\in
\mathbb{R}
\text{,}  \label{S1}
\end{equation}%
where $\mathbf{1}_{\{x_{i}^{(k)}<0\}}=1$ if $x_{i}^{(k)}<0$, and $\mathbf{1}%
_{\{x_{i}^{(k)}<0\}}=0$ otherwise, and
\begin{equation}
b_{i}^{(k+1)}:=b_{i}^{(k)}+|x_{i}^{(k)}|l_{\omega (e_{i}(k))}+\xi _{i}^{(k)}%
\text{ \ for }i\in V\text{ and }1\leq k\leq d_{i}-1,  \label{555}
\end{equation}%
and $\omega (e_{i}(k))$ denotes the terminal vertex of the edge $%
e_{i}(k) $ as before.

Note that for any point $x\in P_{0}$, the mapping $S_{e_{i}(k)}$ defined as
in (\ref{S1}) has the contraction ratio $x_{i}^{(k)}\in (-1,1)\setminus
\{0\} $, therefore it is a contracting similarity, and
\begin{equation}
\digamma (x,\mathcal{b},\ell ):=\{S_{e_{i}(k)}:i\in V,1\leq k\leq d_{i}\}
\label{ff}
\end{equation}%
forms a GD-IFS on the digraph $(V,\{e_{i}(k)\})$, thus having a unique list
of GD-attractors $\{F_{i}\}_{i\in V}$.

For any two vectors $\mathcal{b}$, $\ell $ as in (\ref{bb2}), (\ref{LL2})
and any point $x$ in $P_{0}$, we define the closed intervals (which may be
singletons) for each vertex $i\in V$ by
\begin{eqnarray}
I_{i} &=&[b_{i}^{(1)},b_{i}^{(1)}+l_{i}]\text{,}  \label{I2} \\
I_{i}(k) &=&\big[b_{i}^{(k)},b_{i}^{(k)}+|x_{i}^{(k)}|l_{\omega (e_{i}(k))}\big]%
\text{ \ for }1\leq k\leq d_{i},  \label{I3}
\end{eqnarray}%
where $b_{i}^{(k+1)}$ for $1\leq k\leq d_{i}-1$ are given by (\ref{555}).

We will work with a subset $P$ of $P_{0}$ defined by%
\begin{equation}
P\coloneqq\Big\{x\in P_{0}:r_{\sigma }(M_{x}(1))<1,\sum_{k=1}^{d_{i}-1}\xi
_{i}^{(k)}>0\text{ \ for all \ }1\leq i\leq N\Big\},  \label{sm}
\end{equation}%
where the matrix $M_{x}(1)$ is defined by (\ref{mx}) with $s=1$, and $%
r_{\sigma }(M)$ denotes the \emph{spectral radius} of a matrix $M$, which is
the largest absolute value (complex modulus) of the eigenvalues of $M$.

We show that any point in $P$ will give arise to at least one COSC GD-IFS on
$G$, in form of (\ref{S1}), whose contraction ratios are $%
\{x_{i}^{(k)}\}_{i\in V,1\leq k\leq d_{i}}$ and whose attractor $F_{i}$ at
each vertex $i$ has the convex hull $I_{i}$ given by (\ref{I2}), having the
basic gap lengths $\{\xi _{i}^{(k)}\}_{1\leq k\leq d_{i}-1}$, provided that $%
l_{i}$ satisfies (\ref{lu}) below.

\begin{lemma}[Construction of GD-IFSs]
\label{P1}Let $G=(V,E)$ be a digraph with $d_{i}\geq 2$ for $i\in V$. With
the same notation above, let $x$ be any point in $P$ as in $(\ref{sm}%
)$ and $\mathcal{b}$ be any vector as in $(\ref{bb2})$. Let $(l_{i})_{i\in V}$
be a vector of real numbers given by
\begin{equation}
(l_{i})_{i\in V}^{T}:=(\mathrm{id}-M_{x}(1))^{-1}\left(
\sum_{k=1}^{d_{i}-1}\xi _{i}^{(k)}\right) _{i\in V}^{T},  \label{lu}
\end{equation}%
where $M^{T}$ denotes the transpose of a matrix $M$. Then any GD-IFS $%
\digamma (x,\mathcal{b})$, given by $(\ref{S1}), (\ref{ff})$ and $(\ref{lu})$ and
having attractors $\{F_{i}\}_{i\in V}$, satisfies the following properties.

\begin{enumerate}
\item[$(i).$] For each vertex $i\in V$, we have $l_{i}>0$ and
\begin{equation}
\mathrm{conv}\,F_{i}=I_{i}=[b_{i}^{(1)},\ b_{i}^{(1)}+l_{i}].  \label{C1}
\end{equation}

\item[$(ii).$] The GD-IFS $\digamma (x,\mathcal{b})$ satisfies the COSC. The
basic gaps of attractor $F_{i}$ for $i\in V$ are given by the following open
intervals in $%
\mathbb{R}
$%
\begin{equation}
\left\{ \Big(b_{i}^{(k)}+|x_{i}^{(k)}|l_{\omega
(e_{i}(k))},b_{i}^{(k+1)}\Big)\right\} _{1\leq k\leq d_{i}-1},  \label{bg2}
\end{equation}%
which are arranged in order from left to right. The corresponding basic gap
lengths are
\begin{equation}
\left\{ b_{i}^{(k+1)}-\left( b_{i}^{(k)}+|x_{i}^{(k)}|l_{\omega
(e_{i}(k))}\right) =\xi _{i}^{(k)}\right\} _{1\leq k\leq d_{i}-1}.
\label{gl2}
\end{equation}%
If further all $\xi _{i}^{(k)}>0$ for $i\in V$ and $1\leq k\leq d_{i}-1$,
then $\digamma (x,\mathcal{b})$ satisfies the CSSC.
\end{enumerate}
\end{lemma}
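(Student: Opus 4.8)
The plan is to verify the three assertions in the order $(i)$ then $(ii)$, building everything on the explicit formulas $(\ref{S1})$, $(\ref{555})$, $(\ref{lu})$ together with the iteration scheme $(\ref{003})$. First I would check that $(l_i)_{i\in V}$ given by $(\ref{lu})$ is a well-defined vector of \emph{strictly positive} reals. Since $x\in P$, we have $r_\sigma(M_x(1))<1$, so $(\mathrm{id}-M_x(1))^{-1}=\sum_{k\ge 0}M_x(1)^k$ converges and, because $M_x(1)$ has nonnegative entries, this inverse has nonnegative entries; as $\sum_{k=1}^{d_i-1}\xi_i^{(k)}>0$ for every $i$ (again from the definition of $P$) and the digraph has $d_i\ge 2$, a short argument using that every vertex reaches some vertex with a positive gap-sum (or simply that $\mathrm{id}+M_x(1)$ already contributes) gives $l_i>0$ for all $i$. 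This is the easy part.

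Next, for $(i)$, I would show $\mathrm{conv}\,F_i=I_i$. The key computation is that $(\ref{lu})$ is \emph{exactly} the fixed-point equation for the lengths: rewriting $(\ref{lu})$ as $(l_i)^T=M_x(1)(l_i)^T+(\sum_k\xi_i^{(k)})^T$, i.e.
\begin{equation*}
l_i=\sum_{j\in V}\sum_{e_i(k)\in E_{ij}}|x_i^{(k)}|\,l_j+\sum_{k=1}^{d_i-1}\xi_i^{(k)}
=\sum_{k=1}^{d_i}|x_i^{(k)}|\,l_{\omega(e_i(k))}+\sum_{k=1}^{d_i-1}\xi_i^{(k)},
\end{equation*}
which is precisely the statement that the interval $I_i=[b_i^{(1)},b_i^{(1)}+l_i]$ decomposes, via $(\ref{555})$, as the images $I_i(k)=S_{e_i(k)}(I_{\omega(e_i(k))})$ laid end-to-end separated by gaps of length $\xi_i^{(k)}$ — here one must check using $(\ref{S1})$ that $S_{e_i(k)}(I_{\omega(e_i(k))})=I_i(k)$ regardless of the sign of $x_i^{(k)}$ (the term $-x_i^{(k)}l_{\omega(e_i(k))}\mathbf 1_{\{x_i^{(k)}<0\}}$ is engineered precisely so that a reflection still lands on the interval with the correct left endpoint $b_i^{(k)}$). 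Consequently $\bigcup_{v}\bigcup_{e\in E_{iv}}S_e(I_v)\subseteq I_i$ with matching outer endpoints, so $(\ref{001})$ holds with $I_u:=I_i$; then $(\ref{003})$ gives $F_i=\bigcap_m I_i^m\subseteq I_i$, and since the two extreme pieces $I_i(1),I_i(d_i)$ share the endpoints $b_i^{(1)}$ and $b_i^{(1)}+l_i$ of $I_i$ and each $F_{\omega(e_i(k))}$ contains those endpoints of its own convex hull (induct on the iteration, or argue as in $(\ref{convex})$), we get $\mathrm{conv}\,F_i=I_i$.

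For $(ii)$, once $(i)$ is in hand the COSC is almost immediate: the open set $U_i:=\mathrm{int}\,I_i=\mathrm{int}(\mathrm{conv}\,F_i)$ works because the images $S_{e_i(k)}(I_{\omega(e_i(k))})=I_i(k)$ are ordered left-to-right inside $I_i$ and meet only at the single points bounding the gaps $(b_i^{(k)}+|x_i^{(k)}|l_{\omega(e_i(k))},\,b_i^{(k+1)})$, so their interiors are disjoint — this identifies the left-to-right ordering $(\ref{sk2})$ with the ordering chosen in $(\ref{S1})$, and reading off $(\ref{555})$ gives the basic gaps $(\ref{bg2})$ with lengths $b_i^{(k+1)}-(b_i^{(k)}+|x_i^{(k)}|l_{\omega(e_i(k))})=\xi_i^{(k)}$ exactly as in $(\ref{gl2})$. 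Finally, if every $\xi_i^{(k)}>0$ then every gap is a genuine nonempty open interval, hence the closed images $S_{e_i(k)}(\mathrm{conv}\,F_{\omega(e_i(k))})=I_i(k)$ are pairwise disjoint, which is the CSSC $(\ref{CSSC})$.

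The main obstacle is the sign bookkeeping in $(i)$: one must verify carefully that $(\ref{S1})$ together with the recursion $(\ref{555})$ really does place $S_{e_i(k)}(I_{\omega(e_i(k))})$ at $[b_i^{(k)},\,b_i^{(k)}+|x_i^{(k)}|l_{\omega(e_i(k))}]$ in \emph{both} the orientation-preserving and orientation-reversing cases, since the whole construction — in particular the claim that the convex hulls, gap lengths, and COSC come out as asserted — hinges on that identity. Everything after that is the routine telescoping in $(\ref{003})$–$(\ref{convex})$ applied with these explicit intervals.
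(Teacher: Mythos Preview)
Your proposal is correct and follows essentially the same route as the paper's proof: positivity of $l_i$ via the Neumann series for $(\mathrm{id}-M_x(1))^{-1}$, the fixed-point length identity $l_i=\sum_k|x_i^{(k)}|l_{\omega(e_i(k))}+\sum_k\xi_i^{(k)}$, the sign-by-sign verification that $S_{e_i(k)}(I_{\omega(e_i(k))})=I_i(k)$, and then the endpoint induction on the iterates $I_i^m$ to pin down $\mathrm{conv}\,F_i=I_i$, from which COSC, the basic gaps, and CSSC follow. The paper makes the endpoint step explicit by proving $\min I_i^m=b_i^{(1)}$ and $\max I_i^m=b_i^{(1)}+l_i$ for all $m$ by induction on $m$ (your parenthetical ``induct on the iteration'' is exactly this; note that the alternative appeal to $(\ref{convex})$ would be circular here since that identity already assumes $I_u=\mathrm{conv}\,F_u$).
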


\begin{proof}
Note that
\begin{equation}
b_{i}^{(d_{i})}=b_{i}^{(1)}+\sum_{k=1}^{d_{i}-1}(\xi
_{i}^{(k)}+|x_{i}^{(k)}|l_{\omega (e_{i}(k))}),  \label{99}
\end{equation}%
since, by repeatedly using definition (\ref{555}) of $b_{i}^{(k+1)}$,%
\begin{eqnarray*}
b_{i}^{(d_{i})} &=&b_{i}^{(d_{i}-1)}+|x_{i}^{(d_{i}-1)}|l_{\omega
(e_{i}(d_{i}-1))}+\xi _{i}^{(d_{i}-1)} \\
&=&\left( b_{i}^{(d_{i}-2)}+|x_{i}^{(d_{i}-2)}|l_{\omega
(e_{i}(d_{i}-2))}+\xi _{i}^{(d_{i}-2)}\right) +|x_{i}^{(d_{i}-1)}|l_{\omega
(e_{i}(d_{i}-1))}+\xi _{i}^{(d_{i}-1)}\\
&=&\cdots \\
&=&b_{i}^{(1)}+\sum_{k=1}^{d_{i}-1}(\xi _{i}^{(k)}+|x_{i}^{(k)}|l_{\omega
(e_{i}(k))}).
\end{eqnarray*}%
Also note that
\begin{equation}
l_{i}>0\text{ \ for each }i\in V,  \label{L1}
\end{equation}%
since, by using definition (\ref{sm}) of $P$, the matrix $(\mathrm{id}%
-M_{x}(1))$ is invertible and can be written as
\begin{equation*}
(\mathrm{id}-M_{x}(1))^{-1}=\mathrm{id}+M_{x}(1)+M_{x}^{2}(1)+\cdots ,
\end{equation*}%
 (see for example \cite[Lemma \textbf{B}.1, Appendix B]{Seneta}), %
from which it follows by definition (\ref{lu}) that
\begin{eqnarray}
l_{i} &=&\sum_{j\in V}\left( (\mathrm{id}-M_{x}(1))^{-1}\right) _{ij}\left(
\sum_{k=1}^{d_{j}-1}\xi _{j}^{(k)}\right)  \notag \\
&=&\sum_{j\in V}\left( \mathrm{id}+M_{x}(1)+M_{x}^{2}(1)+\mathbb{\cdots }%
\right) _{ij}\left( \sum_{k=1}^{d_{j}-1}\xi _{j}^{(k)}\right)  \notag \\
&\geq &\sum_{k=1}^{d_{i}-1}\xi _{i}^{(k)}>0  \label{LL6}
\end{eqnarray}%
by using the fact that $M_{x}(1)$ is a nonnegative matrix and that $%
\sum_{k=1}^{d_{i}-1}\xi _{i}^{(k)}>0$ by (\ref{sm}).

We claim that
\begin{equation}
b_{i}^{(d_{i})}+|x_{i}^{(d_{i})}|l_{\omega (e_{i}(d_{i}))}=b_{i}^{(1)}+l_{i}%
\text{ \ for each vertex }i\in V.  \label{91}
\end{equation}%
Indeed, we know by definition (\ref{lu}) that
\begin{equation}
(\mathrm{id}-M_{x}(1))(l_{i})_{i\in V}^{T}=\left( \sum_{k=1}^{d_{i}-1}\xi
_{i}^{(k)}\right) _{i\in V}^{T},  \label{lll}
\end{equation}%
from which, by definitions (\ref{mx}) and (\ref{mij}),%
\begin{equation}
\sum_{k=1}^{d_{i}-1}\xi
_{i}^{(k)}=l_{i}-\sum_{j=1}^{N}M_{ij}(1)l_{j}=l_{i}-\sum_{j=1}^{N}\left(
\sum_{e_{i}(k)\in E_{ij}}|x_{i}^{(k)}|\right)
l_{j}=l_{i}-\sum_{k=1}^{d_{i}}|x_{i}^{(k)}|l_{\omega (e_{i}(k))},
\label{91-1}
\end{equation}%
so that%
\begin{equation}
l_{i}=\sum_{k=1}^{d_{i}-1}\xi
_{i}^{(k)}+\sum_{k=1}^{d_{i}}|x_{i}^{(k)}|l_{\omega (e_{i}(k))}\text{ for
each vertex }i\in V.  \label{LI}
\end{equation}%
Combining this with (\ref{99}),%
\begin{eqnarray*}
l_{i} &=&\sum_{k=1}^{d_{i}-1}\xi
_{i}^{(k)}+\sum_{k=1}^{d_{i}}|x_{i}^{(k)}|l_{\omega
(e_{i}(k))}=\sum_{k=1}^{d_{i}-1}(\xi _{i}^{(k)}+|x_{i}^{(k)}|l_{\omega
(e_{i}(k))})+|x_{i}^{(d_{i})}|l_{\omega (e_{i}(d_{i}))} \\
&=&b_{i}^{(d_{i})}-b_{i}^{(1)}+|x_{i}^{(d_{i})}|l_{\omega (e_{i}(d_{i}))},
\end{eqnarray*}%
thus showing (\ref{91}). This proves our claim.

We next show that the contracting similarity $S_{e_{i}(k)}$ associated with
the edge $e_{i}(k)$ satisfies
\begin{equation}
S_{e_{i}(k)}(I_{\omega
(e_{i}(k))})=[b_{i}^{(k)},b_{i}^{(k)}+|x_{i}^{(k)}|l_{\omega
(e_{i}(k))}]=I_{i}(k)  \label{S2}
\end{equation}%
for each vertex $i\in V$ and each $1\leq k\leq d_{i}$. This is easily seen
by looking at the two endpoints of interval $I_{\omega (e_{i}(k))}$,
depending on whether $x_{i}^{(k)}>0$ or not. Indeed, by definition (\ref{I2}) with
vertex $i$ being replaced by vertex $\omega (e_{i}(k))$,
\begin{equation*}
I_{\omega (e_{i}(k))}=[b_{\omega (e_{i}(k))}^{(1)},b_{\omega
(e_{i}(k))}^{(1)}+l_{\omega (e_{i}(k))}].
\end{equation*}%
If $x_{i}^{(k)}>0$, we have by definition (\ref{S1}) that $%
S_{e_{i}(k)}(b_{\omega (e_{i}(k))}^{(1)})=b_{i}^{(k)}$ and%
\begin{equation*}
S_{e_{i}(k)}(b_{\omega (e_{i}(k))}^{(1)}+l_{\omega
(e_{i}(k))})=x_{i}^{(k)}l_{\omega
(e_{i}(k))}+b_{i}^{(k)}=b_{i}^{(k)}+|x_{i}^{(k)}|l_{\omega (e_{i}(k))},
\end{equation*}%
from which
\begin{eqnarray}
S_{e_{i}(k)}(I_{\omega (e_{i}(k))}) &=&[S_{e_{i}(k)}(b_{\omega
(e_{i}(k))}^{(1)}),S_{e_{i}(k)}(b_{\omega (e_{i}(k))}^{(1)}+l_{\omega
(e_{i}(k))})]  \notag \\
&=&[b_{i}^{(k)},b_{i}^{(k)}+|x_{i}^{(k)}|l_{\omega (e_{i}(k))}],  \label{S9}
\end{eqnarray}%
thus showing (\ref{S2}). On the other hand, if $x_{i}^{(k)}<0$, we similarly
have that $S_{e_{i}(k)}(b_{\omega (e_{i}(k))}^{(1)}+l_{\omega
(e_{i}(k))})=b_{i}^{(k)}$ and
\begin{equation*}
S_{e_{i}(k)}(b_{\omega (e_{i}(k))}^{(1)})=b_{i}^{(k)}-x_{i}^{(k)}l_{\omega
(e_{i}(k))}=b_{i}^{(k)}+|x_{i}^{(k)}|l_{\omega (e_{i}(k))}\text{,}
\end{equation*}%
so
\begin{eqnarray*}
S_{e_{i}(k)}(I_{\omega (e_{i}(k))}) &=&[S_{e_{i}(k)}(b_{\omega
(e_{i}(k))}^{(1)}+l_{\omega (e_{i}(k))}),S_{e_{i}(k)}(b_{\omega
(e_{i}(k))}^{(1)})] \\
&=&[b_{i}^{(k)},b_{i}^{(k)}+|x_{i}^{(k)}|l_{\omega (e_{i}(k))}],
\end{eqnarray*}%
thus showing (\ref{S2}) again. Thus (\ref{S2}) is always true.

Since by definition (\ref{555})%
\begin{equation*}
b_{i}^{(k)}+|x_{i}^{(k)}|l_{\omega (e_{i}(k))}=b_{i}^{(k+1)}-\xi
_{i}^{(k)}\leq b_{i}^{(k+1)},
\end{equation*}%
we know by (\ref{S2}) that the closed intervals $\{I_{i}(k):1\leq k\leq
d_{i}\}$ are arranged in order from left to right, which together with (\ref%
{91}) implies that
\begin{eqnarray}
\mathop{\textstyle \bigcup }\limits_{k=1}^{d_{i}}\mathrm{int}(I_{i}(k)) &=&%
\mathop{\textstyle \bigcup }\limits_{k=1}^{d_{i}}\left(
b_{i}^{(k)},b_{i}^{(k)}+|x_{i}^{(k)}|l_{\omega (e_{i}(k))}\right)  \notag \\
&\subset &\left( b_{i}^{(1)},b_{i}^{(d_{i})}+|x_{i}^{(d_{i})}|l_{\omega
(e_{i}(d_{i}))}\right) =\left( b_{i}^{(1)},b_{i}^{(1)}+l_{i}\right)
\label{92}
\end{eqnarray}%
with the disjoint union.

We are now in a position to prove the assertions $(i),(ii)$.

$(i)$. We will use (\ref{S2}) and definition (\ref{lu}) to derive (\ref{C1}%
). Indeed, recall that the intervals $I_{i}$ are defined in (\ref{I2}). Note
that $l_{i}>0$ for each $i\in V$ by (\ref{L1}). As in (\ref{000}),
 for each vertex $i\in V$ we let
\begin{equation}
I_{i}^{m}\coloneqq\mathop{\textstyle \bigcup }_{\mathbf{e}\in E_{i}^{m}}S_{%
\mathbf{e}}\left( I_{\omega (\mathbf{e})}\right) \text{ for }m=1,2,\cdots ,
\label{007}
\end{equation}%
where $E_{i}^{m}$ is the set of edges of length $m$ leaving vertex $i$,
and $\omega (\mathbf{e})$ is the terminal of path $\mathbf{e}$ as before. We
show that for each vertex $i\in V$%
\begin{equation}
\min I_{i}^{m}=b_{i}^{(1)}=\min I_{i},\text{ \ }\max
I_{i}^{m}=b_{i}^{(1)}+l_{i}=\max I_{i}\text{ \ \ for }m=1,2,\cdots,
\label{008}
\end{equation}%
so that the left and right endpoints, respectively, of all the intervals $I_{i}^{m}$ are
the same.

Indeed, we know by definition (\ref{007}) that for each vertex $i\in V$%
\begin{eqnarray*}
I_{i}^{1} &=&\mathop{\textstyle \bigcup }_{\mathbf{e}\in E_{i}^{1}}S_{%
\mathbf{e}}\left( I_{\omega (\mathbf{e})}\right) =\mathop{\textstyle \bigcup
}_{k=1}^{d_{i}}S_{e_{i}(k)}\left( I_{\omega (e_{i}(k))}\right) \\
&=&\mathop{\textstyle \bigcup }%
_{k=1}^{d_{i}}[b_{i}^{(k)},b_{i}^{(k)}+|x_{i}^{(k)}|l_{\omega (e_{i}(k))}]%
\text{ \ (using (\ref{S2})),}
\end{eqnarray*}%
from which, using the fact that $b_{i}^{(k)}+|x_{i}^{(k)}|l_{\omega
(e_{i}(k))}\leq b_{i}^{(k+1)}$ by (\ref{555}), it follows that $\min
I_{i}^{1}=b_{i}^{(1)}$, and%
\begin{equation*}
\max I_{i}^{1}=b_{i}^{(d_{i})}+|x_{i}^{(d_{i})}|l_{\omega
(e_{i}(d_{i}))}=b_{i}^{(1)}+l_{i}
\end{equation*}%
by using (\ref{91}). Hence, the (\ref{008}) is true when $m=1$ by definition
(\ref{I2}) of $I_{i}$.

Assume inductively that (\ref{008}) holds for some $m\geq 1$. Since for each vertex $%
i\in V$
\begin{equation*}
I_{i}^{m+1}=\mathop{\textstyle \bigcup }_{\mathbf{e}^{\prime }\in
E_{i}^{m+1}}S_{\mathbf{e}^{\prime }}\left( I_{\omega (\mathbf{e}^{\prime
})}\right) =\mathop{\textstyle \bigcup }_{\mathbf{e}\in E_{i}^{m}}S_{\mathbf{%
e}}\left( I_{\omega (\mathbf{e})}^{1}\right)
\end{equation*}%
by using (\ref{002-2}), it follows that
\begin{eqnarray*}
\min I_{i}^{m+1} &=&\min \{S_{\mathbf{e}}(I_{\omega (\mathbf{e})}^{1}):%
\mathbf{e}\in E_{i}^{m}\} \\
&=&\min \{S_{\mathbf{e}}(I_{\omega (\mathbf{e})}):\mathbf{e}\in E_{i}^{m}\}
\\
&=&\min I_{i}^{m}=b_{i}^{(1)}.
\end{eqnarray*}%
Similarly,
\begin{equation*}
\max I_{i}^{m+1}=\max I_{i}^{m}=b_{i}^{(1)}+l_{i}.
\end{equation*}%
Therefore, the (\ref{008}) holds for all $m\geq 1$ by induction.

Since condition (\ref{001}) holds using that $I_{i}^{1}\subset
I_{i}=[b_{i}^{(1)},b_{i}^{(1)}+l_{i}]$, and we know by (\ref{003}) that $%
F_{i}=\tbigcap_{m=1}^{\infty }I_{i}^{m}$,  (\ref{008}) gives that,%
\begin{equation*}
\mathrm{conv}\,F_{i}=\mathrm{conv}\,\tbigcap_{m=1}^{\infty
}I_{i}^{m}=[b_{i}^{(1)},\ b_{i}^{(1)}+l_{i}],
\end{equation*}%
showing that (\ref{C1}) holds true.

$(ii)$. Applying (\ref{C1}) with $i$  replaced by vertex $\omega
(e_{i}(k))$, the terminal of the edge $e_{i}(k)$, %
\begin{equation}
\mathrm{conv}\,F_{\omega (e_{i}(k))}=I_{\omega (e_{i}(k))}=[b_{\omega
(e_{i}(k))}^{(1)},\ b_{\omega (e_{i}(k))}^{(1)}+l_{\omega (e_{i}(k))}],
\label{C4}
\end{equation}%
from which it follows by (\ref{S2}) that%
\begin{equation}
S_{e_{i}(k)}(\mathrm{conv}\,F_{\omega (e_{i}(k))})=S_{e_{i}(k)}(I_{\omega
(e_{i}(k))})=[b_{i}^{(k)},b_{i}^{(k)}+|x_{i}^{(k)}|l_{\omega
(e_{i}(k))}]=I_{i}(k)  \label{C5}
\end{equation}%
for $1\leq k\leq d_{i}$.

We show that $\digamma (x,\mathcal{b})$ satisfies the COSC. Taking $U_{i}=%
\mathrm{int(conv}\,F_{i})$, from (\ref{C1})
\begin{equation*}
U_{i}=\mathrm{int}(\mathrm{conv}\,F_{i})=(b_{i}^{(1)},\ b_{i}^{(1)}+l_{i})=%
\mathrm{int}(I_{i}),
\end{equation*}%
so that each open set $U_{i}$ is not empty as $l_{i}>0$. It
follows that
\begin{eqnarray*}
\mathop{\textstyle \bigcup }_{j\in V}\mathop{\textstyle \bigcup }_{e\in
E_{ij}}S_{e}\left( U_{j}\right) &=&\mathop{\textstyle \bigcup }%
_{k=1}^{d_{i}}S_{e_{i}(k)}\left( U_{\omega (e_{i}(k))}\right) =%
\mathop{\textstyle \bigcup }_{k=1}^{d_{i}}S_{e_{i}(k)}\left( \mathrm{int(}%
I_{\omega (e_{i}(k))}\right) ) \\
&=&\mathop{\textstyle \bigcup }_{k=1}^{d_{i}}\mathrm{int}\left(
S_{e_{i}(k)}(I_{\omega (e_{i}(k))})\right) =\mathop{\textstyle \bigcup }%
_{k=1}^{d_{i}}\mathrm{int}(I_{i}(k))\text{ \ (using (\ref{S2}))} \\
&\subseteq &(b_{i}^{(1)},\ b_{i}^{(1)}+l_{i})\text{ \ \ (using (\ref{92}))}
\\
&=&U_{i}
\end{eqnarray*}%
with the union disjoint. Thus $\digamma (x,\mathcal{b})$ satisfies the COSC.

For each vertex $i\in V$, the basic gaps of the attractor $F_{i}$ are the
complementary open intervals between the closed interval%
\begin{equation*}
S_{e_{i}(k)}(\mathrm{conv}\,F_{\omega
(e_{i}(k))})=[b_{i}^{(k)},b_{i}^{(k)}+|x_{i}^{(k)}|l_{\omega
(e_{i}(k))}]=I_{i}(k)\text{ \ (using (\ref{C5}))}
\end{equation*}%
and its neighbour%
\begin{equation*}
S_{e_{i}(k+1)}(\mathrm{conv}\,F_{\omega
(e_{i}(k+1))})=[b_{i}^{(k+1)},b_{i}^{(k+1)}+|x_{i}^{(k+1)}|l_{\omega
(e_{i}(k+1))}]=I_{i}(k+1)
\end{equation*}%
for $1\leq k\leq d_{i}-1$. Specifically, they are the following open intervals%
\begin{equation*}
\left\{ \left( b_{i}^{(k)}+|x_{i}^{(k)}|l_{\omega
(e_{i}(k))},b_{i}^{(k+1)}\right) \right\} _{1\leq k\leq d_{i}-1}.
\end{equation*}%
that are arranged in order from left to right, thus showing (\ref{bg2}) for
each vertex $i\in V$.

The basic gap lengths of the attractor $F_{i}$ are the lengths of the open
intervals in (\ref{bg2}), which are equal to
\begin{equation*}
b_{i}^{(k+1)}-\left( b_{i}^{(k)}+|x_{i}^{(k)}|l_{\omega (e_{i}(k))}\right)
=\xi _{i}^{(k)}\text{ \ (}1\leq k\leq d_{i}-1\text{)}
\end{equation*}%
by using definition (\ref{555}), thus showing (\ref{gl2}).

Finally, if all $\xi _{i}^{(k)}>0$, then $\digamma (x,\mathcal{b})$
satisfies the CSSC, since%
\begin{equation*}
\mathop{\textstyle \bigcup }_{v\in V}\mathop{\textstyle \bigcup }_{e\in
E_{uv}}S_{e}\left( \mathrm{conv}\,F_{v}\right) =\mathop{\textstyle \bigcup }%
_{k=1}^{d_{i}}S_{e_{i}(k)}\left( \mathrm{conv}\,F_{\omega (e_{i}(k))}\right)
=\mathop{\textstyle \bigcup }_{k=1}^{d_{i}}I_{i}(k)
\end{equation*}%
with the disjoint union, as the intervals $I_{i}(k)$ and $I_{i}(k+1)$ are
separated by distance $\xi _{i}^{(k)}$, which are strictly positive.
\end{proof}

\begin{remark}
\label{R1}\textrm{Note that any point $x$ belongs to $P$ if
\begin{equation}
\max_{i\in V}\Bigg\{ \sum_{k=1}^{d_{i}}|x_{i}^{(k)}|\Bigg\} <1.
\label{large}
\end{equation}%
This is because
\begin{equation}
r_{\sigma }(M_{x}(1))\leq \max_{i\in V}\Bigg\{
\sum_{j=1}^{N}\sum\limits_{e_{i}(k)\in E_{ij}}|x_{i}^{(k)}|\Bigg\}
=\max_{i\in V}\Bigg\{ \sum_{k=1}^{d_{i}}|x_{i}^{(k)}|\Bigg\} <1,  \label{SE}
\end{equation}%
using the elementary fact that the spectral radius of a nonnegative
matrix is no greater than any row sum, see for example \cite[Equation (1.9)]%
{Seneta}. Therefore, every $x\in P_{0}$ satisfying (\ref{large})
belongs to $P$, and all the assertions $(i)$, $(ii)$ in Lemma \ref{P1} hold
true, provided that $(l_{i})_{i\in V}$ are chosen as in (\ref{lu}). }
\end{remark}

We now look at subsets $P$, depending on a number $\delta >0$%
, which will give rise to a special class of GD-IFSs, satisfying the CSSC,
having attractors $\{F_{i}\}_{i\in V}$ with the property that $\mathrm{conv}%
\,F_{i}=[0,1]$, and all the basic gaps of $F_{i}$ have the same length $%
\delta $.

\begin{definition}
\label{Def2}Let $\delta $ be a small number such that%
\begin{equation}
0<\delta <\min_{i\in V}\Bigg\{ \frac{1}{d_{i}-1}\Bigg\}  \label{x1}
\end{equation}%
(recall our assumption that the out-degree $d_i$ at vertex $i$ satisfies $d_i\geq 2$
for all $i$).
We define a set $\mathcal{A}(\delta )$ by%
\begin{equation}
\begin{array}{cc}
\mathcal{A}(\delta )\coloneqq\Big\{(x_{1}^{(1)},\cdots
,x_{1}^{(d_{1})},x_{2}^{(1)},\cdots ,x_{2}^{(d_{2})},\cdots
,x_{N}^{(1)},\cdots ,x_{N}^{(d_{N})},\delta ,\cdots ,\delta )\in
\mathbb{R}
^{n}:|x_{i}^{(k)}|>0 &  \\
\text{ }\hskip0.1cm\text{ and }|x_{i}^{(1)}|+\cdots
+|x_{i}^{(d_{i})}|=1-\left( d_{i}-1\right) \delta \text{ for all }i\in
V,1\leq k\leq d_{i}\Big\}. &
\end{array}
\label{AA1}
\end{equation}%
\end{definition}

Let $M_{x}(1)$ be an $N\times N$ matrix associated with point $x$ as in (\ref%
{mx}) for $s=1$. For each $x \in \mathcal{A}(\delta )$, the spectral
radius of matrix $M_{x}(1)$ is less than $1$, since
\begin{equation}
\max_{i\in V}\Bigg\{ \sum_{k=1}^{d_{i}}|x_{i}^{(k)}|\Bigg\} =\max_{i\in
V}\left\{ 1-(d_{i}-1)\delta \right\} <1\text{ (using (\ref{x1}))}
\label{xx7}
\end{equation}%
and hence,
\begin{equation}
\mathcal{A}(\delta )\subset P  \label{xx8}
\end{equation}%
where the set $P$ is as in (\ref{sm}). Moreover,%
\begin{equation}
(\mathrm{id}-M_{x}(1))\left(
\begin{array}{c}
1 \\
1 \\
\vdots \\
1%
\end{array}%
\right) :=\left(
\begin{array}{c}
(d_{1}-1)\delta \\
(d_{2}-1)\delta \\
\vdots \\
(d_{N}-1)\delta%
\end{array}%
\right)  \label{xx3}
\end{equation}%
so that (\ref{lu}) is satisfied with%
\begin{equation}
l_{i}=1\text{ \ and \ }\xi _{i}^{(k)}=\delta \text{ \ for }i\in V\text{; }%
1\leq k\leq d_{i}-1\text{,}  \label{xx9}
\end{equation}%
this is because for each $i\in V$, by definitions (\ref{AA1}) and (\ref{mij}%
),%
\begin{eqnarray}
(d_{i}-1)\delta &=&1-\left( |x_{i}^{(1)}|+\cdots +|x_{i}^{(d_{i})}|\right)
\label{xx11} \\
&=&1-\sum_{j=1}^{N}\Bigg( \sum_{e_{i}(k)\in E_{ij}}|x_{i}^{(k)}|\Bigg)
=\sum_{j=1}^{N}\left( \mathrm{id}-M_{x}(1)\right) _{ij}\left(
\begin{array}{c}
1 \\
1 \\
\vdots \\
1%
\end{array}%
\right) .  \notag
\end{eqnarray}

Let $\{b_{i}^{(k)}\}_{i\in V,1\leq k\leq d_{i}}$ be a family of real numbers
given by%
\begin{equation}
b_{i}^{(1)}=0\text{ \ and \ }b_{i}^{(k+1)}=b_{i}^{(k)}+|x_{i}^{(k)}|+\delta
\text{ \ for }i\in V,1\leq k\leq d_{i}-1\text{,}  \label{bb4}
\end{equation}%
so that%
\begin{equation}
b_{i}^{(k+1)}=|x_{i}^{(1)}|+|x_{i}^{(2)}|+\cdots +|x_{i}^{(k)}|+k\delta
\text{ \ (}i\in V,1\leq k\leq d_{i}-1\text{). }  \label{bb5}
\end{equation}%
Clearly, each $b_{i}^{(k+1)}\in (0,1)$ for $1\leq k\leq d_{i}-1$\ by using (%
\ref{xx11}).

Let $\mathcal{b}_{0}$, $\ell _{1}$ be two vectors defined by
\begin{eqnarray}
\mathcal{b}_{0} &\coloneqq&(b_{i}^{(1)},b_{2}^{(1)},\cdots
,b_{N}^{(1)})=(0,0,\cdots ,0),  \notag \\
\ell _{1} &\coloneqq&(l_{1},l_{2},\cdots ,l_{N})=(1,1,\cdots ,1).
\label{LL2-1}
\end{eqnarray}%
In this situation, for  $x \in\mathcal{A}(\delta )$, the
contracting similarities defined in (\ref{S1}) read%
\begin{equation}
S_{e_{i}(k)}(t)=x_{i}^{(k)}t+b_{i}^{(k)}-x_{i}^{(k)}\mathbf{1}%
_{\{x_{i}^{(k)}<0\}}\text{ \ for a variable }t\in
\mathbb{R}
\label{S5}
\end{equation}%
for $i\in V,1\leq k\leq d_{i}$, which will give arise to a GD-IFS satisfying
the CSSC. This will be used in Theorem \ref{T2} below.

\begin{corollary}
\label{C2}Let $G=(V,E)$ be a digraph with $d_{i}\geq 2$ for $i\in
V=\{1,2,\cdots ,N\}$. Let $\delta $ satisfy $(\ref{x1})$. For
$x \in \mathcal{A}(\delta )$, let%
\begin{equation*}
\digamma (x):=\{S_{e_{i}(k)}:i\in V,1\leq k\leq d_{i}\}
\end{equation*}%
be a GD-IFS given as in $(\ref{S5})$, with attractors $(F_{i})_{i\in V}$. Then
the following statements hold.

\begin{enumerate}
\item[$(i).$] For each vertex $i\in V$, $\mathrm{conv}\,F_{i}=[0,1].$

\item[$(ii).$] For each vertex $i\in V$,
\begin{equation}
S_{e_{i}(1)}([0,1])=\left[ 0,|x_{i}^{(1)}|\right]  \label{61}
\end{equation}%
so that $|x_{i}^{(1)}|\in F_{i}$. The basic gaps of the attractor $%
F_{i}$ are given by%
\begin{equation}
\left( |x_{i}^{(1)}|+\cdots +|x_{i}^{(k)}|+(k-1)\delta ,|x_{i}^{(1)}|+\cdots
+|x_{i}^{(k)}|+k\delta \right)  \label{62}
\end{equation}%
for every $1\leq k\leq d_{i}-1$, so that the basic gap lengths are all equal
to the same number, $\delta $ say. Moreover, the  GD-IFS $\digamma (x)$ satisfies the
CSSC.
\end{enumerate}
\end{corollary}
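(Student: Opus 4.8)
The plan is to obtain this corollary as a direct specialisation of Lemma \ref{P1}: I would apply that lemma with the particular data $\mathcal{b}=\mathcal{b}_0=(0,\dots,0)$ and $\ell=\ell_1=(1,\dots,1)$ from (\ref{LL2-1}), and with every basic gap length $\xi_i^{(k)}$ set equal to $\delta$, and then simply read off the conclusions from the formulas in Lemma \ref{P1} after substitution and simplification. Almost all of the bookkeeping has in fact already been carried out in the equations (\ref{xx7})--(\ref{bb5}) preceding the statement, so the proof should be short.

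The first step is to check the hypotheses of Lemma \ref{P1}. Since $x\in\mathcal{A}(\delta)$, each $|x_i^{(k)}|>0$ and $\sum_{k=1}^{d_i}|x_i^{(k)}|=1-(d_i-1)\delta\in(0,1)$ by (\ref{x1}), so $x_i^{(k)}\in(-1,1)\setminus\{0\}$ and hence $x\in P_0$; then (\ref{xx7})--(\ref{xx8}) already give $r_\sigma(M_x(1))<1$ and $\sum_{k=1}^{d_i-1}\xi_i^{(k)}=(d_i-1)\delta>0$, so $x\in P$ as in (\ref{sm}). Next I would note that $(l_i)_{i\in V}=(1,\dots,1)$ is exactly the vector prescribed by (\ref{lu}), which is the content of (\ref{xx3})--(\ref{xx11}). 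Finally, with $b_i^{(1)}=0$, $l_{\omega(e_i(k))}=1$ and $\xi_i^{(k)}=\delta$, the recursion (\ref{555}) collapses to (\ref{bb4}) (equivalently the closed form (\ref{bb5})), and the similarity (\ref{S1}) reduces to $x_i^{(k)}t+b_i^{(k)}-x_i^{(k)}\mathbf{1}_{\{x_i^{(k)}<0\}}$, which is precisely (\ref{S5}). Thus $\digamma(x)$ is the GD-IFS $\digamma(x,\mathcal{b}_0)$ to which Lemma \ref{P1} applies, and the bulk of the assertions follow at once: Lemma \ref{P1}$(i)$ gives $\mathrm{conv}\,F_i=[b_i^{(1)},b_i^{(1)}+l_i]=[0,1]$, which is $(i)$; Lemma \ref{P1}$(ii)$ gives the COSC and, since every $\xi_i^{(k)}=\delta>0$, upgrades it to the CSSC; and substituting the closed form (\ref{bb5}) into the basic-gap formula (\ref{bg2}) turns it into (\ref{62}), each such gap having length $\delta$ by (\ref{gl2}).

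The one point that is not purely mechanical is (\ref{61}) together with the membership $|x_i^{(1)}|\in F_i$. For (\ref{61}) I would apply (\ref{C5}) with $k=1$ and $\mathrm{conv}\,F_{\omega(e_i(1))}=[0,1]$ from part $(i)$, giving $S_{e_i(1)}([0,1])=[0,|x_i^{(1)}|]$. Then, using the general fact that the two endpoints of the convex hull of a nonempty compact subset of $\mathbb{R}$ belong to that set, we have $\{0,1\}\subset F_{\omega(e_i(1))}$; tracking the (sign-of-$x_i^{(1)}$ dependent) action of $S_{e_i(1)}$ on these two endpoints shows that one of them is sent to $|x_i^{(1)}|$, and since $S_{e_i(1)}(F_{\omega(e_i(1))})\subseteq F_i$ this gives $|x_i^{(1)}|\in F_i$. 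There is no real obstacle in this proof — it is essentially a restatement of Lemma \ref{P1} for a convenient one-parameter family of GD-IFSs — and the only mildly delicate step is precisely this orientation-dependent endpoint tracking.
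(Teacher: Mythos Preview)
Your proposal is correct and follows essentially the same approach as the paper: both specialise Lemma \ref{P1} to the data $\mathcal{b}_0=(0,\dots,0)$, $\ell_1=(1,\dots,1)$, $\xi_i^{(k)}=\delta$, verifying $x\in P$ and (\ref{lu}) via the already-established (\ref{xx7})--(\ref{xx11}), and then read off (\ref{C1}), (\ref{S2})/(\ref{C5}), (\ref{bg2}) with the substitutions. Your treatment of the membership $|x_i^{(1)}|\in F_i$ is in fact more explicit than the paper's, which simply asserts it as a consequence of (\ref{61}).
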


\begin{proof}
Let $x\in \mathcal{A}(\delta )$. Then $x\in P$ by using (\ref%
{xx8}), and condition (\ref{lu}) is also satisfied by  (\ref{xx3}).
Thus all the assumptions in Lemma \ref{P1} are satisfied. Applying Lemma \ref%
{P1}$(i)$ and using (\ref{I2}) with $b_{i}^{(1)}=0$ and $l_{i}=1$,%
\begin{equation*}
\mathrm{conv}\,F_{i}=[b_{i}^{(1)},b_{i}^{(1)}+l_{i}]=[0,1],
\end{equation*}%
thus showing $(i)$.

To show $(ii)$, noting that $I_{\omega (e_{i}(k))}=[0,1]$ and $%
l_{\omega (e_{i}(k))}=1,b_{i}^{(1)}=0$, we know by (\ref{S2}) that%
\begin{equation*}
S_{e_{i}(1)}([0,1])=S_{e_{i}(1)}(I_{\omega
(e_{i}(k))})=[b_{i}^{(1)},b_{i}^{(1)}+|x_{i}^{(1)}|l_{\omega
(e_{i}(1))}]=[0,|x_{i}^{(1)}|]
\end{equation*}%
thus showing (\ref{61}).

By (\ref{bg2}), (\ref{bb5}), the basic gaps of the attractor $F_{i}$ are
given by%
\begin{eqnarray*}
\left( b_{i}^{(k)}+|x_{i}^{(k)}|l_{\omega (e_{i}(k))},b_{i}^{(k+1)}\right)
&=&\left( b_{i}^{(k)}+|x_{i}^{(k)}|,b_{i}^{(k+1)}\right) \\
&=&\left( |x_{i}^{(1)}|+\cdots +|x_{i}^{(k)}|+(k-1)\delta
,|x_{i}^{(1)}|+\cdots +|x_{i}^{(k)}|+k\delta \right)
\end{eqnarray*}%
for every $1\leq k\leq d_{i}-1$, thus showing (\ref{62}). From this, it is
clear that the basic gap lengths all are equal to the same number which
we call $\delta $.
Finally, $\digamma (x)$ satisfies the CSSC by Lemma \ref{P1}$(ii)$
since all $\xi _{i}^{(k)}=\delta >0$.
\end{proof}

\section{Criteria for graph directed attractors not to be self-similar sets}

\label{Sect4}In this section we give some sufficient conditions under which
GD-attractors cannot be realised as attractors of any standard IFSs with or
without the COSC.

For a directed path $L$, let $A(L)$ (resp. $A(L^{c})$) be the set of the
absolute values of the contraction ratios of the similarities associated
with the edges in $L$ (resp. not in $L$). Recall the definition of $\Lambda _{u}$
from (\ref{14}).

\begin{lemma}
\label{main thm} Assume that $(V,E)$ is a digraph with $d_{w}\geq 2$ for all
$w\in V$ and $L$ is a directed circuit that does not go through every vertex
in $V$. Let $u$ be a vertex outside $L$ and $v$ a vertex in $L$, assume that
there exists a directed path from $u$ to $v$. Consider a COSC GD-IFS based
on this digraph. With the notation above, suppose that the following three
conditions hold:

\begin{enumerate}
\item[$(i)$] $(A(L))^{\mathbb{Q}^{\ast }}\cap (A(L^{c}))^{\mathbb{Q}%
_{+}^{\ast }}=\emptyset .$

\item[$(ii)$] $\Lambda _{u}\neq \emptyset $ and $\Lambda _{v}\neq \emptyset $%
.

\item[$(iii)$] For all pairs $(w,k)\neq (z,m)$ with $\lambda _{z}^{(m)}\neq
0 $ where $w,z\in V$ and $1\leq k\leq d_{w}-1,1\leq m\leq d_{z}-1$,%
\begin{equation*}
\lambda _{w}^{(k)}/\lambda _{z}^{(m)}\notin (A(L)\cup A(L^{c}))^{\mathbb{Q}}.
\end{equation*}
\end{enumerate}

\noindent Then the graph-directed IFS attractor $F_{u}$ is not the attractor
of any COSC standard IFS.
\end{lemma}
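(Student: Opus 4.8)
The plan is to deduce the result from the dichotomy in Lemma~\ref{Lemma 2.9}$(ii)$ together with the remark following it. Put $A=\{|\rho_e|:e\in E\}$ for the absolute contraction ratio set of the GD-IFS and take $A_1=A(L)$, $A_2=A(L^c)$; then $A_1\cup A_2=A$, and hypothesis~$(i)$ is exactly $A_1^{\mathbb{Q}^{\ast}}\cap A_2^{\mathbb{Q}_{+}^{\ast}}=\emptyset$. Since $\Lambda_u\neq\emptyset$ by~$(ii)$, Proposition~\ref{P2} gives $\mathrm{GL}(F_u)\supseteq\Lambda_u\neq\emptyset$, so $F_u$ is not an interval or a singleton and Lemma~\ref{Lemma 2.9} applies. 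By that lemma's part $(ii)$ and the remark after it, it then suffices to exhibit $\theta_1,\theta_2\in\mathrm{GL}(F_u)$ with $R_{\mathrm{GL}(F_u)}(\theta_1)\cap A(L)^{\mathbb{Q}_{+}^{\ast}}\neq\emptyset$ and $R_{\mathrm{GL}(F_u)}(\theta_2)\cap A(L)^{\mathbb{Q}_{+}^{\ast}}=\emptyset$, since these two properties cannot coexist when $F_u$ is the attractor of a COSC standard IFS.

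To produce $\theta_1$, fix a directed path $P$ from $u$ to $v$ (one exists by assumption) and some $\lambda\in\Lambda_v$ (nonempty by~$(ii)$). As $L$ is a directed circuit through $v$, each concatenation $PL^{k}$ ($k\ge 0$) is again a directed path from $u$ to $v$, so by Proposition~\ref{P2} the numbers $\lambda|\rho_P|\,|\rho_L|^{k}$ all lie in $\mathrm{GL}(F_u)$. With $\theta_1:=\lambda|\rho_P|$ this is a strictly decreasing geometric sequence in $\mathrm{GL}(F_u)$ of common ratio $|\rho_L|\in(0,1)$, so $|\rho_L|\in R_{\mathrm{GL}(F_u)}(\theta_1)$; and $|\rho_L|=\prod_{e\in L}|\rho_e|\in A(L)^{\mathbb{Z}_{+}^{\ast}}\subseteq A(L)^{\mathbb{Q}_{+}^{\ast}}$, as required.

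To produce $\theta_2$, take $\mu=\lambda_u^{(j_0)}\in\Lambda_u$ and set $\theta_2:=\mu$; the claim is $R_{\mathrm{GL}(F_u)}(\mu)\cap A(L)^{\mathbb{Q}_{+}^{\ast}}=\emptyset$. Suppose not, and let $\{\theta'r^{k}\}_{k\ge 0}\subseteq\mathrm{GL}(F_u)$ be a strictly decreasing geometric sequence through $\mu$ with $r\in A(L)^{\mathbb{Q}_{+}^{\ast}}$. Rewrite the formula of Proposition~\ref{P2} as $\mathrm{GL}(F_u)=\bigcup_{(w,k):\,\lambda_w^{(k)}>0}\lambda_w^{(k)}C_{w,k}$, where $C_{u,k}=\{1\}\cup\{|\rho_{\mathbf{e}}|:\mathbf{e}\text{ is a directed circuit at }u\}$ and $C_{w,k}=\{|\rho_{\mathbf{e}}|:\mathbf{e}\text{ is a directed path }u\to w\}$ for $w\neq u$; each $C_{w,k}$ lies in $A^{\mathbb{Z}_{+}}$, and by~$(iii)$ the nonzero $\lambda_w^{(k)}$ are pairwise distinct with all mutual ratios outside $A^{\mathbb{Q}}$. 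Hence Lemma~\ref{GDR}$(ii)$ confines the whole sequence to a single block $\lambda_{w_0}^{(k_0)}C_{w_0,k_0}$, and since $\mu=\mu\cdot 1\in\mu\,C_{u,j_0}$, uniqueness forces $(w_0,k_0)=(u,j_0)$. As every element of $C_{u,j_0}$ is $\le 1$ while $\theta'\ge\mu$, we must have $\theta'=\mu$, so $r^{k}\in C_{u,j_0}$ for all $k\ge 0$; in particular $r<1$ gives $r=|\rho_{\mathbf{e}}|$ for some directed circuit $\mathbf{e}$ at $u$. The first edge of $\mathbf{e}$ leaves $u$, which is not a vertex of $L$, hence is not an edge of $L$, so $r=\alpha\beta$ with $\alpha\in A(L)^{\mathbb{Z}_{+}}$ (the edges of $\mathbf{e}$ on $L$) and $\beta\in A(L^c)^{\mathbb{Z}_{+}^{\ast}}\subseteq A(L^c)^{\mathbb{Q}_{+}^{\ast}}$ (the remaining, nonempty, collection). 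Then $\beta=r/\alpha\in A(L)^{\mathbb{Q}}$, and $\beta<1$ forces $\beta\in A(L)^{\mathbb{Q}^{\ast}}$, contradicting~$(i)$. This proves the claim; combined with the choice of $\theta_1$ (and noting $\theta_1\neq\theta_2$, since $\theta_1=\theta_2$ would give $\lambda/\lambda_u^{(j_0)}=|\rho_P|^{-1}\in A^{\mathbb{Q}}$, against~$(iii)$), this furnishes $\theta_1,\theta_2\in\mathrm{GL}(F_u)$ violating the dichotomy of Lemma~\ref{Lemma 2.9}$(ii)$, whence $F_u$ is not the attractor of any COSC standard IFS.

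The main obstacle is the construction of $\theta_2$: one must fit the gap-length description of Proposition~\ref{P2} precisely into the framework of Lemma~\ref{GDR}, then exploit the ``single-block'' conclusion to drive a hypothetical geometric ratio $r\in A(L)^{\mathbb{Q}_{+}^{\ast}}$ into the block $\mu\,C_{u,j_0}$ and thence into $A(L)^{\mathbb{Q}^{\ast}}\cap A(L^c)^{\mathbb{Q}_{+}^{\ast}}$. The geometric heart of this is the elementary observation that a directed circuit based at a vertex $u$ off $L$ is forced to traverse an edge not on $L$, so its contraction ratio carries a genuine $A(L^c)$-factor — which is what makes condition~$(i)$ bite.
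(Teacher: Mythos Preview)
Your proof is correct and follows essentially the same approach as the paper's: both establish the nonempty side via the geometric sequence $\lambda_v|\rho_P||\rho_L|^k$, then use Lemma~\ref{GDR}$(ii)$ with the decomposition~(\ref{doubleunion}) to pin any geometric sequence through $\lambda_u$ into $\lambda_u R(uu)$, and finish by noting that circuits through $u$ must use an edge off $L$. The only cosmetic differences are that the paper packages the last step as the inclusion $R(uu)\subset\{1\}\cup A_1^{\mathbb{Q}_+}A_2^{\mathbb{Q}_+^\ast}$ and invokes~(\ref{2.15}), whereas you factor $r=\alpha\beta$ directly; and your parenthetical verifying $\theta_1\neq\theta_2$ is unnecessary, since a set cannot be simultaneously empty and nonempty.
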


Basically, condition $(i)$ means that linear combinations of numbers $\{\log
|\rho _{e}|:e\in A(L)\}$ over $\mathbb{Q}^{\ast }$, that is,%
\begin{equation*}
\sum_{e\in A(L)}q_{e}\log |\rho _{e}|\text{ \ for }(q_{e})_{e\in A(L)}\in (%
\mathbb{Q}^{\#A(L)})^{\ast }
\end{equation*}%
where $(\mathbb{Q}^{\#A(L)})^{\ast }$ is the set of non-zero vectors in $%
\mathbb{Q}^{\#A(L)}$ as before, are different from those of numbers $\{\log
|\rho _{e}|:e\in A(L^{c})\}$ over $\mathbb{Q}_{+}^{\ast }$, while condition $%
(ii)$ means that not all basic gaps associated with $u$ and $v$ are empty,
and condition $(iii)$ means that $\log (\lambda _{w}^{(k)}/\lambda
_{z}^{(m)})$ for all distinct basic gaps of positive lengths are different
from linear combinations of numbers $\{\log |\rho _{e}|:e\in E\}$ over $%
\mathbb{Q}$. Note that condition $(i)$ requires a certain homogeneity, on
the ratios of the gap length set of a COSC self-similar GD-attractor, which
does not necessarily hold when $(ii)$ and $(iii)$ are satisfied. Note that
among the three conditions $(i)$, $(ii)$, $(iii)$, no two of them
imply the third.

\begin{proof}
We show that the strict dichotomy required by Lemma \ref{Lemma 2.9} $(ii)$
for a graph-directed attractor fails for $F_{u}$ satisfying the conditions
of this theorem.

Let $u$ be a vertex outside $L$ and $v$ a vertex in $L$. For any $w\neq u$
in $V$, let
\begin{equation*}
R(uw)=\{|\rho _{\mathbf{e}}|:\ \mathbf{e}\text{ is a directed path from }u%
\text{ to }w\},
\end{equation*}%
and let%
\begin{equation*}
R(uu)=\{1\}\cup \{|\rho _{\mathbf{e}}|:\ \mathbf{e}\text{ is a directed
circuit containing }u\}.
\end{equation*}%
With the above notation, the union (\ref{doubleunion}) becomes
\begin{equation}
\Theta \coloneqq\mathrm{GL}(F_{u})=\mathop{\textstyle \bigcup }_{w\in
V}\Lambda _{w}R(uw)=\mathop{\textstyle \bigcup }_{w\in V}\mathop{\textstyle
\bigcup }_{\lambda \in \Lambda _{w}}\lambda R(uw).  \label{newdoubleunion}
\end{equation}%
By condition $(ii)$, we can choose two non-zero basic gap lengths $\lambda
_{u}\in \Lambda _{u}$, $\lambda _{v}\in \Lambda _{v}$. Since there exists a
directed path $\mathbf{e}$ from $u$ to $v$, we can choose a number
\begin{equation*}
\theta \coloneqq\lambda _{v}|\rho _{\mathbf{e}}|\in \lambda _{v}R(uv)\subset
\mathrm{GL}(F_{u}).
\end{equation*}

Recall that $\rho _{L}$ denotes the product of the contraction ratios on the
edges of $L$. For each integer $k\geq 0$, we define $\mathbf{e}L^{k}$ by $%
\mathbf{e}L^{0}:=\mathbf{e}$ and
\begin{equation*}
\mathbf{e}L^{k}:=\mathbf{e}\underset{k\text{ times}}{\underbrace{L\cdots L}}%
\text{ \ for }k\geq 1,
\end{equation*}%
all of which are directed paths from $u$ to $v$, so that $|\rho _{\mathbf{e}%
L^{k}}|\in R(uv)$. Note that%
\begin{equation}
|\rho _{L}|\in R_{\mathrm{GL}(F_{u})}(\theta ),  \label{41}
\end{equation}%
since for every $k\geq 0$,
\begin{equation*}
\theta |\rho _{L}|^{k}=\lambda _{v}|\rho _{\mathbf{e}}||\rho
_{L}|^{k}=\lambda _{v}|\rho _{\mathbf{e}L^{k}}|\in \lambda _{v}R(uv)\subset
\mathrm{GL}(F_{u})
\end{equation*}%
by using (\ref{newdoubleunion}), which implies (\ref{41}) by definition (\ref{R})
with $\theta ^{\prime }$ being replaced by $\theta \in \mathrm{GL}(F_{u})$.

Set $A_{1}\coloneqq A(L),\ A_{2}\coloneqq A(L^{c})$. Since $|\rho _{L}|\in
(A(L))^{\mathbb{Z}_{+}^{\ast }}=A_{1}^{\mathbb{Z}_{+}^{\ast }}\subset A_{1}^{%
\mathbb{Q}_{+}^{\ast }}$, we obtain by (\ref{41})
\begin{equation}
R_{\mathrm{GL}(F_{u})}(\theta )\cap A_{1}^{\mathbb{Q}_{+}^{\ast }}\neq
\emptyset .  \label{neq}
\end{equation}

Let
\begin{equation*}
r\in R_{\mathrm{GL}(F_{u})}(\lambda _{u}).
\end{equation*}%
By definition (\ref{R}), there exists a geometric sequence $\{\theta
^{\prime }r^{k}\}_{k=0}^{\infty }\subset \mathrm{GL}(F_{u})$ containing $%
\lambda _{u}$ with $\theta ^{\prime }\in \mathrm{GL}(F_{u})$. Note that $%
\lambda _{u}\in \lambda _{u}R(uu)\subset \mathrm{GL}(F_{u})$ by (\ref%
{newdoubleunion}).

We claim that%
\begin{equation}
\theta ^{\prime }=\lambda _{u}.  \label{42}
\end{equation}%
To see this, taking the decomposition of $\Theta =\mathrm{GL}(F_{u})$ given
by (\ref{newdoubleunion}), the requirements for Lemma \ref{GDR} $(ii)$, with
$\lambda _{j}$ varying in $\{\lambda \in \Lambda _{w}:w\in V\}$, $A_{j}$
varying in $\{R_{uw}:w\in V\}$ and with $A=A(L)\mathop{\textstyle \bigcup }
A(L^{c})$, are satisfied by assumption $(iii)$. Thus, there is a unique $%
w\in V$ and a unique $\lambda \in \Lambda _{w}$ such that
\begin{equation}
\{\theta ^{\prime }r^{k}\}_{k=0}^{\infty }\subset \lambda R(uw),  \label{44}
\end{equation}%
and
\begin{equation}
\theta ^{\prime }r^{k}\notin \lambda ^{\prime }R(uz)\text{ \ for all }%
(\lambda ^{\prime },z)\neq (\lambda ,w)\text{ and all }k\geq 0  \label{43}
\end{equation}%
by (\ref{51}). Thus, $\lambda _{u}\in \{\theta ^{\prime
}r^{k}\}_{k=0}^{\infty }\subset \lambda R(uw)$. On the other hand, noting
that $1\in R(uu)$ so that%
\begin{equation}
\lambda _{u}\in \lambda _{u}R(uu),  \label{ghw}
\end{equation}%
we conclude that $\lambda =\lambda _{u},w=u$ by (\ref{43}).

Since $r<1,$ we have $\lambda _{u}=\theta ^{\prime }r^{k}\leq \theta
^{\prime }$ for some $k$. As $R(uu)\subset (0,1]$, we know by (\ref{44})
that
\begin{equation}
\theta ^{\prime }r^{k}\in \lambda R(uw)=\lambda _{u}R(uu)  \label{45}
\end{equation}%
for every $k\geq 0$, which gives that $\theta ^{\prime }\leq \lambda _{u}$
on taking $k=0$, and so $\lambda _{u}=\theta ^{\prime }$, thus proving our
claim (\ref{42}).

By (\ref{42}) and (\ref{45}) with $k=1$,
\begin{equation*}
\lambda _{u}r=\theta ^{\prime }r\in \lambda _{u}R(uu),
\end{equation*}%
from which we see that $r\in R(uu)$, thus showing that
\begin{equation}
R_{\mathrm{GL}(F_{u})}(\lambda _{u})\subset R(uu),  \label{47}
\end{equation}%
since $r$ is any number in $R_{\mathrm{GL}(F_{u})}(\lambda _{u})$.

On the other hand, since $u$ is not in the circuit $L$, any directed circuit
$L^{\prime }$ containing $u$ must also visit some edge outside $L$ as well,
implying that $|\rho _{L^{\prime }}|\in (A(L))^{\mathbb{Z}_{+}}(A(L^{c}))^{%
\mathbb{Z}_{+}^{\ast }}=A_{1}^{\mathbb{Z}_{+}}A_{2}^{\mathbb{Z}_{+}^{\ast }}$
and
\begin{eqnarray}
R(uu) &=&\{1\}\cup \{|\rho _{L^{\prime }}|:\ L^{\prime }\text{ is a directed
circuit containing }u\}  \notag \\
&\subset &\{1\}\cup A_{1}^{\mathbb{Z}_{+}}A_{2}^{\mathbb{Z}_{+}^{\ast
}}\subset \{1\}\cup A_{1}^{\mathbb{Q}_{+}}A_{2}^{\mathbb{Q}_{+}^{\ast }}.
\label{48}
\end{eqnarray}%
Noting that by assumption $(i)$
\begin{equation*}
A_{1}^{\mathbb{Q}^{\ast }}\cap A_{2}^{\mathbb{Q}_{+}^{\ast }}=(A(L))^{%
\mathbb{Q}^{\ast }}\cap (A(L^{c}))^{\mathbb{Q}_{+}^{\ast }}=\emptyset
\end{equation*}%
so that $A_{1}^{\mathbb{Q}_{+}}A_{2}^{\mathbb{Q}_{+}^{\ast }}\cap A_{1}^{%
\mathbb{Q}_{+}^{\ast }}=\emptyset $ by (\ref{2.15}), it follows that
\begin{eqnarray}
R_{\mathrm{GL}(F_{u})}(\lambda _{u})\cap A_{1}^{\mathbb{Q}_{+}^{\ast }}
&\subset &R(uu)\cap A_{1}^{\mathbb{Q}_{+}^{\ast }}\text{ \ (using (\ref%
{47}))}  \notag \\
&\subset &\left( \{1\}\cup A_{1}^{\mathbb{Q}_{+}}A_{2}^{\mathbb{Q}_{+}^{\ast
}}\right) \cap A_{1}^{\mathbb{Q}_{+}^{\ast }}\text{ \ (using (\ref{48}))}
\notag \\
&=&\left( \{1\}\cap A_{1}^{\mathbb{Q}_{+}^{\ast }}\right) \cup \left( A_{1}^{%
\mathbb{Q}_{+}}A_{2}^{\mathbb{Q}_{+}^{\ast }}\cap A_{1}^{\mathbb{Q}%
_{+}^{\ast }}\right) =\emptyset  \label{eq}
\end{eqnarray}%
using that $\{1\}\cap A_{1}^{\mathbb{Q}_{+}^{\ast }}=\emptyset $,
since all numbers in $A_{1}^{\mathbb{Q}_{+}^{\ast }}$ are strictly less than $%
1 $.

Finally, since (\ref{neq}) and (\ref{eq}) hold simultaneously, Lemma \ref%
{Lemma 2.9}$(ii)$ implies that $F_{u}$ cannot be the attractor of any COSC
standard IFS.
\end{proof}

Note that the assumption `there exists a directed path from $u$ to $v$' in
Lemma \ref{main thm} is necessary. The following example shows that without
this assumption, the GD-attractor may be an attractor of some standard IFS
(with or without the COSC).

\begin{example}
\label{EX3}Let $G=(V,E)$ be the digraph (not strongly connected) in Figure \ref%
{Fig3} with $V=\{1,2,3\}$ and $E$ consisting of seven edges, three of which
leave vertex $1$ (including one loop).
\begin{figure}[h]
\begin{center}
\includegraphics[width=8cm, height=6cm]{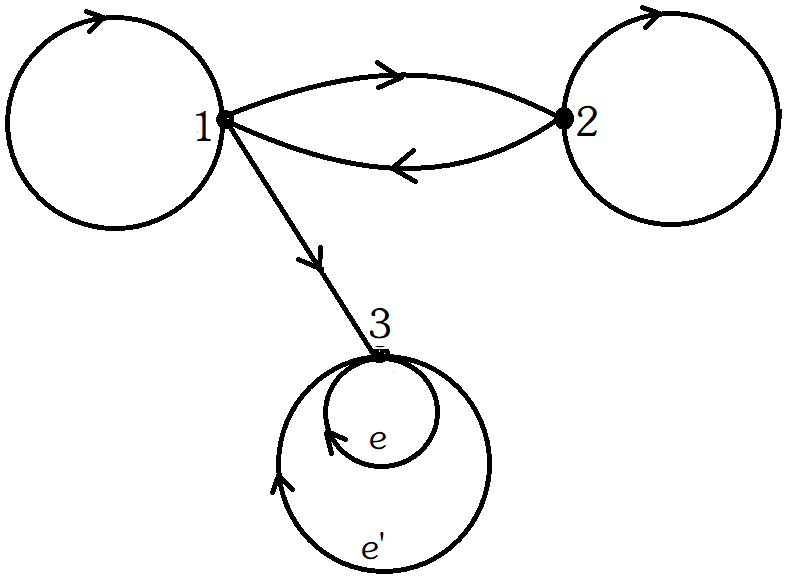}
\end{center}
\caption{{~}$F_{3}$ is an attractor of a standard IFS.}
\label{Fig3}
\end{figure}
Let $\{S_{e}\}_{e\in E}$ be any COSC GD-IFS, having GD-attractors $%
F_{1},F_{2},F_{3}$ associated with vertices $1,2,3$ respectively. By (\ref%
{gdattract}), the set $F_{3}$ satisfies
\begin{equation*}
F_{3}=S_{e}(F_{3})\cup S_{e^{\prime }}(F_{3}),
\end{equation*}%
which is an attractor of the standard IFS $\{S_{e},S_{e^{\prime }}\}$. Note
that there is no directed path from vertex $3$ to other two vertices $1,2$.
\end{example}

We give an example to illustrate Lemma \ref{main thm}. Our example is a
digraph that has three vertices and is not strongly connected.

\begin{example}[Three-vertex digraph]
\label{EX2}Let $G=(V,E)$ be the (not strongly) connected digraph in Fig. \ref%
{Fig2} with $V=\{1,2,3\}$ and $E$ consisting of seven edges.
\begin{figure}[h]
\begin{center}
\includegraphics[width=8cm, height=7cm]{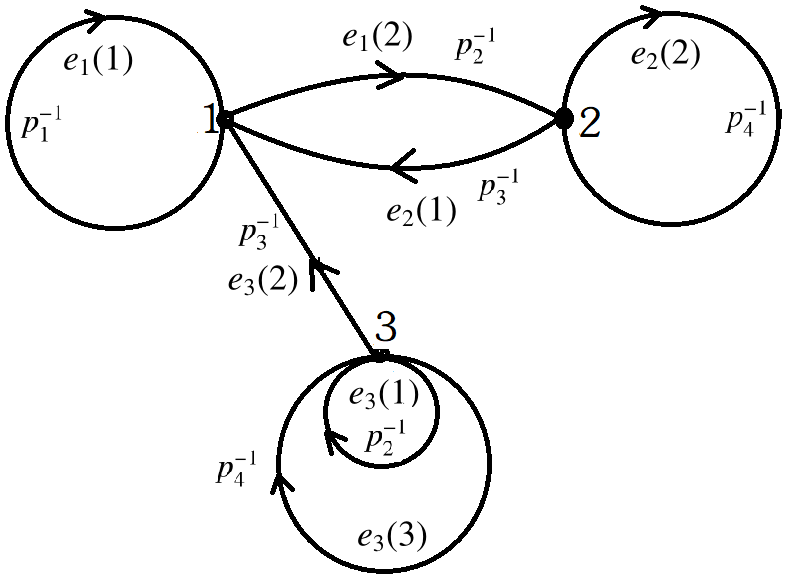}
\end{center}
\caption{${F}_{3}$ is not an attractor of any standard COSC IFS.}
\label{Fig2}
\end{figure}
Note that the out-degrees of the vertices are respectively%
\begin{equation*}
d_{1}=d_{2}=2,d_{3}=3.
\end{equation*}%
Let $L=e_{1}(1)$ be a loop (circuit) so that vertex $u=3$ is outside $L$
whilst vertex $v=1$ is inside $L$. A directed path from $u$ to $v$ is
labelled by $e_{3}(2)$.

Let $x^{\prime }$ be a point given by
\begin{equation*}
x^{\prime
}=(p_{1}^{-1},p_{2}^{-1},p_{3}^{-1},p_{4}^{-1},p_{2}^{-1},p_{3}^{-1},p_{4}^{-1},\lambda ,p_{5}\lambda ,0,\pi \lambda ),
\end{equation*}%
where $\{p_{j}\}_{1\leq j\leq 5}$ are five distinct positive prime numbers.
The matrix $M_{x^{\prime }}(1)$ defined by (\ref{mx}) is given by%
\begin{equation*}
M:=M_{x^{\prime }}(1)=\left(
\begin{array}{ccc}
p_{1}^{-1} & p_{2}^{-1} & 0 \\
p_{3}^{-1} & p_{4}^{-1} & 0 \\
p_{3}^{-1} & 0 & p_{2}^{-1}+p_{4}^{-1}%
\end{array}%
\right) .
\end{equation*}%
The point $x^{\prime }$ belongs to the set $P$ in (\ref{sm}) by using (\ref%
{large}) as the sum of each row of matrix $M$ is bounded by $1$, that is,
\begin{equation*}
\max
\{p_{1}^{-1}+p_{2}^{-1}+0,p_{3}^{-1}+p_{4}^{-1}+0,p_{3}^{-1}+0+p_{2}^{-1}+p_{4}^{-1}\}<1.
\end{equation*}

Let $\ell =(l_{1},l_{2},l_{3})$ be determined by (\ref{lu}), that is,%
\begin{equation}
\ell ^{T}=\left(
\begin{array}{c}
l_{1} \\
l_{2} \\
l_{3}%
\end{array}%
\right) =(\mathrm{id}-M)^{-1}\left(
\begin{array}{c}
\lambda \\
p_{5}\lambda \\
0+\pi \lambda%
\end{array}%
\right) .  \label{L2}
\end{equation}

Let $\mathcal{b}=(0,0,0)$ and let $\digamma (x^{\prime }):=\digamma
(x^{\prime },\mathcal{b})$ be a GD-IFS constructed as in Lemma \ref{P1},
which is given by%
\begin{eqnarray*}
S_{e_{1}(1)}(t) &=&p_{1}^{-1}t,\text{ \ }%
S_{e_{1}(2)}(t)=p_{2}^{-1}t+b_{1}^{(2)}\text{,} \\
S_{e_{2}(1)}(t) &=&p_{3}^{-1}t,\text{ \ }%
S_{e_{2}(2)}(t)=p_{4}^{-1}t+b_{2}^{(2)}\text{,} \\
S_{e_{3}(1)}(t) &=&p_{2}^{-1}t,\text{ \ }%
S_{e_{3}(2)}(t)=p_{3}^{-1}t+b_{3}^{(2)}\text{, \ }%
S_{e_{3}(3)}(t)=p_{4}^{-1}t+b_{3}^{(3)}\text{ \ for }t\in
\mathbb{R}
,
\end{eqnarray*}%
where $\{b_{1}^{(2)},b_{2}^{(2)},b_{3}^{(2)},b_{3}^{(3)}\}$ are determined
by (\ref{555}), with $\ell =(l_{1},l_{2},l_{3})$ determined by (\ref{L2}).

By Lemma \ref{P1}, such a GD-IFS, $\digamma (x^{\prime })$, satisfies the
COSC, and the basic gap length sets at three vertices are respectively%
\begin{eqnarray}
\lambda _{1}^{(1)} &=&\lambda \text{ \ (at vertex }1\text{),}  \notag \\
\lambda _{2}^{(1)} &=&p_{5}\lambda \text{ \ (at vertex }2\text{),}  \notag \\
\text{ }\lambda _{3}^{(1)} &=&0,\lambda _{3}^{(2)}=\pi \lambda \text{ \ (at
vertex }3\text{),}  \label{L4}
\end{eqnarray}%
so that the sets of positive gap lengths at the vertices are
given by
\begin{eqnarray}
\Lambda _{1} &=&\{\lambda \}\text{ \ (at vertex }1\text{),}  \notag \\
\Lambda _{2} &=&\{p_{5}\lambda \}\text{ \ (at vertex }2\text{),}  \notag \\
\Lambda _{3} &=&\{\pi \lambda \}\text{ \ (at vertex }3\text{).}  \label{L5}
\end{eqnarray}

Since $L=e_{1}(1)$ is a loop, we see that%
\begin{equation}
A(L)=\{p_{1}^{-1}\}\text{ \ \ and \ \ }A(L^{c})=%
\{p_{2}^{-1},p_{3}^{-1},p_{4}^{-1}\},  \label{L6}
\end{equation}%
so that the contraction ratio set $A$ is given by%
\begin{equation}
A=A(L)\cup A(L^{c})=\{p_{1}^{-1},p_{2}^{-1},p_{3}^{-1},p_{4}^{-1}\}.
\label{L7}
\end{equation}

We show that conditions $(i)$, $(ii)$, $(iii)$ in Lemma \ref{main thm} are
all satisfied when $L=e_{1}(1)$, $u=3$ and $v=1$. Thus the attractor $F_{3}$
of the GD-IFS, $\digamma (x^{\prime })$ above, is not the attractor of any
COSC standard IFS.

To verify condition $(i)$, we need to show that%
\begin{equation*}
(A(L))^{\mathbb{Q}^{\ast }}\cap (A(L^{c}))^{\mathbb{Q}_{+}^{\ast
}}=\emptyset ,
\end{equation*}%
where $A(L)$, $A(L^{c})$ are given as in (\ref{L6}). Otherwise, there would
exist some non-zero rational number $q$ such that%
\begin{equation*}
\left( p_{1}^{-1}\right) ^{q}\in \{p_{2}^{-1},p_{3}^{-1},p_{4}^{-1}\}^{%
\mathbb{Q}_{+}^{\ast }},
\end{equation*}%
which would imply%
\begin{equation*}
1\in \{p_{1}^{-1},p_{2}^{-1},p_{3}^{-1},p_{4}^{-1}\}^{\mathbb{Q}_{+}^{\ast }}
\end{equation*}%
a contradiction by using Proposition \ref{easy} in the Appendix.

Condition $(ii)$ holds by directly using (\ref{L5}).

Finally, for condition $(iii)$, we know from (\ref{L4}) that all the ratios $%
\lambda _{w}^{(k)}/\lambda _{z}^{(m)}$ of basic gap lengths for $(w,k)\neq
(z,m)$ lie in the following set%
\begin{equation*}
\left\{ \frac{1}{p_{5}},\frac{1}{\pi },p_{5},\frac{p_{5}}{\pi },0,\pi ,\frac{%
\pi }{p_{5}}\right\} ,
\end{equation*}%
each number in which does not belong to $A^{\mathbb{Q}}=%
\{p_{1}^{-1},p_{2}^{-1},p_{3}^{-1},p_{4}^{-1}\}^{\mathbb{Q}}$ by using
Proposition \ref{easy} in Appendix and the fact that $\pi $ is
transcendental. Thus, condition $(iii)$ is satisfied.
\end{example}

We mention in passing that one can also construct a GD-IFS with the \emph{%
CSSC}, whose GD-attractor is not attractor of any standard IFS. For example,
let $p_{6}$ be a prime different from other $p_{j}\ (1\leq j\leq 5),$ and
let
\begin{equation*}
x^{\prime \prime
}=(p_{1}^{-1},p_{2}^{-1},p_{3}^{-1},p_{4}^{-1},p_{2}^{-1},p_{3}^{-1},p_{4}^{-1},\lambda ,p_{5}\lambda ,p_{6}\lambda ,\pi \lambda ).
\end{equation*}%
Such a point $x^{\prime \prime }$ also belongs to the set $P$, and the
corresponding GD-IFS, $\digamma (x^{\prime \prime })$ associated with $%
x^{\prime \prime }$ in a way of Lemma \ref{P1}, satisfies the CSSC. When $%
L=e_{1}(1)$, $u=3$ and $v=1$, the attractor $F_{3}$ of this $\digamma
(x^{\prime \prime })$ is not an attractor of any standard IFS. We omit the
details.

\begin{lemma}
\label{main}For a strongly connected digraph $G=(V,E)$ with $d_{w}\geq 2$
for all $w\in V$, let $A$ be the absolute contraction ratio set of a COSC
GD-IFS based on $G$, having $\left( F_{u}\right) _{u\in V}$ as its attractors%
\emph{. }Suppose that the following conditions hold:

\begin{enumerate}
\item[$(i^{\prime })$] All the contraction ratios have different absolute
values, and $1\notin A^{\mathbb{Q}^{\ast }}$.

\item[$(ii^{\prime })$] $\Lambda _{w}\neq \emptyset $ for all $w\in V$.

\item[$(iii)$] For all pairs $(w,k)\neq (z,m)$ with $\lambda _{z}^{(m)}\neq
0 $ where $w,z\in V$ and $1\leq k\leq d_{w}-1,1\leq m\leq d_{z}-1$,%
\begin{equation*}
\lambda _{w}^{(k)}/\lambda _{z}^{(m)}\notin (A(L)\cup A(L^{c}))^{\mathbb{Q}}.
\end{equation*}
\end{enumerate}

\noindent \noindent If $G$ contains a directed circuit not passing through a
vertex $u$, then $F_{u}$ is not the attractor of any COSC standard IFS.
\end{lemma}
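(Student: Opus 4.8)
The plan is to derive Lemma~\ref{main} directly from Lemma~\ref{main thm} by verifying the latter's three hypotheses. Fix the vertex $u$ and let $L$ be a directed circuit of $G$ not passing through $u$. A directed circuit contains at least one edge, hence passes through at least one vertex; choose such a vertex $v$ on $L$. Since $u\notin L$ we have $v\neq u$, and because $G$ is strongly connected there is a directed path from $u$ to $v$. Moreover $L$ does not go through every vertex of $V$ (it misses $u$), and $d_{w}\geq 2$ for all $w$ by hypothesis, so the structural assumptions of Lemma~\ref{main thm} hold with this $u,v,L$.

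Two of the three numbered conditions of Lemma~\ref{main thm} are then immediate. Its condition~$(ii)$ asks $\Lambda_{u}\neq\emptyset$ and $\Lambda_{v}\neq\emptyset$, a special case of $(ii')$. Its condition~$(iii)$ is word-for-word our condition~$(iii)$, noting that the absolute contraction ratio set is $A=A(L)\cup A(L^{c})$. So it remains only to check condition~$(i)$ of Lemma~\ref{main thm}, namely $\bigl(A(L)\bigr)^{\mathbb{Q}^{\ast}}\cap\bigl(A(L^{c})\bigr)^{\mathbb{Q}_{+}^{\ast}}=\emptyset$.

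I would prove this by contradiction, using $(i')$. Since $L$ has an edge, $A(L)\neq\emptyset$; since the $d_{u}\geq 2$ edges leaving $u$ are not in $L$, also $A(L^{c})\neq\emptyset$ (if either were empty the intersection would be empty trivially). By $(i')$ the map $e\mapsto|\rho_{e}|$ is injective, so $A(L)$ and $A(L^{c})$ are disjoint subsets of $(0,1)$ that together partition $A$; write $A(L)=\{a_{i}\}_{i=1}^{s}$ and $A(L^{c})=\{c_{j}\}_{j=1}^{t}$. Suppose $z$ lies in the intersection. By definition we may write $z=\prod_{i=1}^{s}a_{i}^{p_{i}}$ for some non-zero rational vector $(p_{i})_{i=1}^{s}$ and $z=\prod_{j=1}^{t}c_{j}^{q_{j}}$ for some non-zero vector $(q_{j})_{j=1}^{t}$ of non-negative rationals. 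Then $\prod_{i=1}^{s}a_{i}^{p_{i}}\prod_{j=1}^{t}c_{j}^{-q_{j}}=1$, and since $A=A(L)\sqcup A(L^{c})$ and $(p_{i})_{i}\neq 0$, the exponent vector of this product indexed by $A$ is a non-zero rational vector, so $1\in A^{\mathbb{Q}^{\ast}}$, contradicting $(i')$. Hence condition~$(i)$ of Lemma~\ref{main thm} holds.

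With $(i)$, $(ii)$, $(iii)$ of Lemma~\ref{main thm} all verified, that lemma gives that $F_{u}$ is not the attractor of any COSC standard IFS, which is the assertion of Lemma~\ref{main}. The only step beyond bookkeeping is the short Diophantine implication $(i')\Rightarrow(i)$: distinctness of the absolute ratios together with $1\notin A^{\mathbb{Q}^{\ast}}$ prevents any non-trivial rational relation among the elements of $A$ from producing $1$, which is precisely what would be forced if the rational multiplicative spans attached to $L$ and to its complement overlapped. Everything else is a reduction to Lemma~\ref{main thm}.
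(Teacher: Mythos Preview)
Your proposal is correct and follows essentially the same approach as the paper: both reduce to Lemma~\ref{main thm} by verifying its hypotheses, with the only substantive step being the contradiction argument showing that $(i')$ implies condition~$(i)$ via the disjointness $A(L)\cap A(L^{c})=\emptyset$ and the assumption $1\notin A^{\mathbb{Q}^{\ast}}$. The paper's proof is virtually identical, differing only in that it uses the non-vanishing of the $(q_j)$ vector rather than the $(p_i)$ vector to conclude the combined exponent vector is non-zero.
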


\begin{proof}
Let $L$ be a directed circuit that does not go through $u$. By condition
\textrm{(}$ii^{\prime }$\textrm{)} we know that condition $(ii)$ in Lemma %
\ref{main thm} holds upon taking any vertex $v$ in $L$. Since the digraph $G$
is strongly connected, there exists a directed path from $u$ to $v$. Since
condition $(iii)$ remains the same, we only need to verify condition $(i)$
in Lemma \ref{main thm} under the stronger condition \textrm{(}$i^{\prime }$%
\textrm{)}. For, suppose that there exists some $\theta \in (A(L))^{\mathbb{Q%
}}\cap (A(L^{c}))^{\mathbb{Q}_{+}^{\ast }}$. Setting%
\begin{equation*}
A(L)=\{b_{i}\}_{i=1}^{m},\ A(L^{c})=\{c_{j}\}_{j=1}^{n},
\end{equation*}%
so that $A=A(L)\cup A(L^{c})$, we write%
\begin{equation*}
\theta
=\tprod\limits_{i=1}^{m}b_{i}^{p_{i}}=\tprod\limits_{j=1}^{n}c_{j}^{q_{j}}
\end{equation*}%
for some two vectors $(p_{i})_{i=1}^{m}\in \mathbb{Q}^{m}$ and $%
(q_{j})_{j=1}^{n}\in (\mathbb{Q}_{+}^{n})^{\ast }$. Then
\begin{equation*}
1=\tprod\limits_{i=1}^{m}b_{i}^{p_{i}}\tprod\limits_{j=1}^{n}c_{j}^{-q_{j}}%
\in A^{\mathbb{Q}^{\ast }},
\end{equation*}%
where we have used that $A(L)\cap A(L^{c})=\emptyset $ since all the
contraction ratios have different absolute values by condition ($i^{\prime }$%
). However, this contradicts our assumption $1\notin A^{\mathbb{Q}^{\ast }}$%
. Therefore, all conditions in Lemma \ref{main thm} are satisfied, thus
the conclusion of the lemma follows.
\end{proof}

\begin{remark}
Lemma \ref{main} is an extension of \cite[Theorem 6.3]{BooreFal2013}. The
assertion of Lemma \ref{main} is optimal in the sense that the restriction
on the graph `there is a circuit not passing through $u$' cannot be relaxed
(see Theorem \ref{5.1} in the Appendix).
\end{remark}

The following example, with a digraph
that has two vertices with two loops and is strongly connected,
illustrates Lemma \ref{main}.

\begin{example}[Two-vertex digraph]
\label{EX1}\textrm{\textrm{Let $G=(V,E)$ be a strongly connected digraph
where $V=\{1,2\}$, $E=\{e_{1}(1),e_{1}(2),e_{2}(1),e_{2}(2)\}$, so that $%
d_{1}=2$, $d_{2}=2$ , see Figure \ref{ver2}. } }
\begin{figure}[h]
\begin{center}
\includegraphics[width=8cm, height=2.5cm]{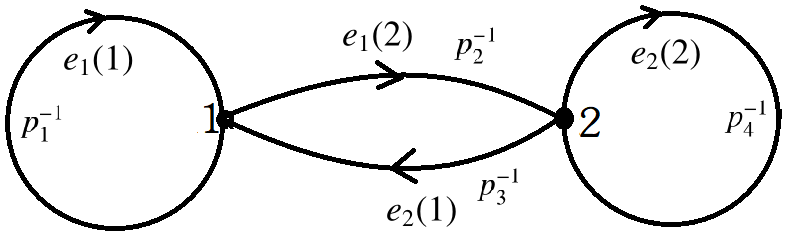}
\end{center}
\caption{$F_1$ and $F_2$ are not the attractors of any standard COSC IFS.}
\label{ver2}
\end{figure}

\textrm{\textrm{Let $\{p_{j}\}_{1\leq j\leq 4}$ be four distinct primes
arranged in ascending order so that $2\leq p_{j}<p_{j+1}$, and let $p_{5}$
be a positive number such that $\log p_{5}$ is not a rational linear
combination of $\{\log p_{j}\}_{1\leq j\leq 4}$. Let $\lambda >0$ be any
real number, and let $x$ be a vector given by
\begin{equation}
x=(p_{1}^{-1},p_{2}^{-1},p_{3}^{-1},p_{4}^{-1},\lambda ,p_{5}\lambda )%
\eqqcolon(x_{1}^{(1)},x_{1}^{(2)},x_{2}^{(1)},x_{2}^{(2)},\xi _{1}^{(1)},\xi
_{2}^{(1)}).  \label{5p}
\end{equation}%
}The matrix $M_{x}(1)$ in (\ref{mx}), (\ref{mij}) associated with point $x$
is given by%
\begin{equation}
M:=M_{x}(1)=\left(
\begin{array}{cc}
x_{1}^{(1)} & x_{1}^{(2)} \\
x_{2}^{(1)} & x_{2}^{(2)}%
\end{array}%
\right) =\left(
\begin{array}{cc}
p_{1}^{-1} & p_{2}^{-1} \\
p_{3}^{-1} & p_{4}^{-1}%
\end{array}%
\right) .  \label{E1}
\end{equation}%
\textrm{Note that $x\in P$ in (\ref{sm}) by using (\ref{large}), since }}%
\begin{equation*}
\mathrm{\mathrm{\max \{p_{1}^{-1}+p_{2}^{-1},p_{3}^{-1}+p_{4}^{-1}\}<1.}}
\end{equation*}%
\textrm{\textrm{Let $\ell =(l_{1},l_{2})^{T}$ be given by (\ref{lu}), that
is,%
\begin{equation}
\ell =\left(
\begin{array}{c}
l_{1} \\
l_{2}%
\end{array}%
\right) =(\mathrm{id}-M_{x}(1))^{-1}\left(
\begin{array}{c}
\xi _{1}^{(1)} \\
\xi _{2}^{(1)}%
\end{array}%
\right) =\left(
\begin{array}{cc}
1-p_{1}^{-1} & -p_{2}^{-1} \\
-p_{3}^{-1} & 1-p_{4}^{-1}%
\end{array}%
\right) ^{-1}\left(
\begin{array}{c}
\lambda \\
p_{5}\lambda%
\end{array}%
\right) .  \label{E2}
\end{equation}%
Let $\mathcal{b}\coloneqq(b_{1}^{(1)},b_{2}^{(1)})$ \ for $%
b_{1}^{(1)},b_{2}^{(1)}\in \mathbb{R}$, and let%
\begin{eqnarray*}
&&b_{1}^{(2)}\coloneqq b_{1}^{(1)}+p_{1}^{-1}l_{1}+\lambda , \\
&&b_{2}^{(2)}\coloneqq b_{2}^{(1)}+p_{3}^{-1}l_{2}+\lambda p_{5}.
\end{eqnarray*}%
We define four similarities $\digamma (x,\mathcal{b}):=%
\{S_{e_{1}(1)},S_{e_{1}(2)},\mathrm{\mathrm{S}}_{e_{2}(1)},S_{e_{2}(2)}\}$,
depending on $x$, $\mathcal{b}$, by%
\begin{eqnarray}
S_{e_{1}(1)}(t) &=&p_{1}^{-1}t+b_{1}^{(1)},\text{ \ }%
S_{e_{1}(2)}(t)=p_{2}^{-1}t+b_{1}^{(2)},  \notag \\
S_{e_{2}(1)}(t) &=&p_{3}^{-1}t+b_{2}^{(1)},\text{ \ }%
S_{e_{2}(2)}(t)=p_{4}^{-1}t+b_{2}^{(2)}\text{ \ for }t\in
\mathbb{R}
.  \label{p5}
\end{eqnarray}%
Clearly, such a GD-IFS $\digamma (x,\mathcal{b})$ has absolute contraction
ratio set given by $A:=\{p_{i}^{-1}\}_{i=1}^{4}$. Applying Lemma \ref{P1}, $%
\digamma (x,\mathcal{b})$ satisfies the CSSC, whose basic gap lengths sets
are $\Lambda _{1}=\{\xi _{1}^{(1)}\}=\{\lambda \}$ (at vertex $1$) and $%
\Lambda _{2}=\{\xi _{2}^{(1)}\}=\{\lambda p_{5}\}$ (at vertex $2)$. Let $%
F_{1},F_{2}$ be the attractors of $\digamma (x,\mathcal{b})$ at vertices $1$
and $2$. } }

\textrm{\textrm{We will use Lemma \ref%
{main} to  show that $F_{1}$ (or $F_{2}$) is not the attractor of
any COSC standard IFS, noting that $(V,E)$ contains a directed circuit (loop) not
passing through vertex $1$ (or through vertex $2$).  } }

\textrm{\textrm{Condition $(i^{\prime })$ is clear since the
contraction ratios $A=\{p_{i}^{-1}\}_{i=1}^{4}$ are distinct, and $1\notin
A^{\mathbb{Q}^{\ast }}$ by using Proposition \ref{easy} in the Appendix.
Condition $(ii^{\prime })$ is trivial since the basic gap lengths are $%
\lambda ,\lambda p_{5}$ that are strictly positive. } }

\textrm{\textrm{It remains to verify condition $(iii)$, or equivalently to
check that
\begin{eqnarray*}
\frac{\lambda _{2}^{(1)}}{\lambda _{1}^{(1)}} &=&\frac{\xi _{2}^{(1)}}{\xi
_{1}^{(1)}}=\frac{\lambda p_{5}}{\lambda }=p_{5}\notin A^{\mathbb{Q}}, \\
\frac{\lambda _{1}^{(1)}}{\lambda _{2}^{(1)}} &=&\frac{1}{p_{5}}\notin A^{%
\mathbb{Q}}.
\end{eqnarray*}%
However, this is trivial by noting that
\begin{equation*}
p_{5}\neq \left( p_{1}^{-1}\right) ^{s_{1}}\left( p_{2}^{-1}\right)
^{s_{2}}\left( p_{3}^{-1}\right) ^{s_{3}}\left( p_{4}^{-1}\right) ^{s_{4}}%
\text{ \ (the same is true for }p_{5}^{-1}\text{)}
\end{equation*}%
for any rationals $(s_{i})_{i=1}^{4}$, since $\log p_{5}$ is not a rational
linear combination of $\{\log p_{j}\}_{1\leq j\leq 4}$. } }

\textrm{\textrm{Therefore, all the assumptions $(i^{\prime }),(i^{\prime
\prime }),(iii)$ in Lemma \ref{main} are satisfied, so the GD-attractor $F_{1}$
(or $F_{2}$) is not the attractor of any standard IFS with the COSC. } }
\end{example}

We next show that for $n$-dimensional Lebesgue almost all vectors in $P$,
all the conditions in Lemma \ref{main} hold for their corresponding GD-IFSs.
Let $P_{1}$ be a subset of $P$ given by%
\begin{equation}
P_{1}:=\{x\in P_{0}:r_{\sigma }(M_{x}(1))<1,\xi _{i}^{(k)}>0\text{ for each
vertex }i\in V\text{, }1\leq k\leq d_{i}-1\}.  \label{P1-2}
\end{equation}%
Clearly, $P_{1}\subset P$ since each $\sum_{k=1}^{d_{i}-1}\xi _{i}^{(k)}>0$.

\begin{definition}[Admissible set]
\label{ADS}With the notation as above, we say that a point $%
x=(x_{1},x_{2},\cdots ,x_{n})$ in the set $P_{1}$ is \emph{admissible if}%
\begin{equation}
\tprod_{i=1}^{n}|x_{i}|^{p_{i}}\neq \tprod_{i=1}^{n}|x_{i}|^{q_{i}}
\label{13}
\end{equation}%
for any two distinct vectors $(p_{i})_{i=1}^{n}$ and $(q_{i})_{i=1}^{n}$ of
nonnegative rationals. The set of all admissible points is denoted by $%
\mathcal{A}$.
\end{definition}

Note that the admissible set $\mathcal{A}$\ depends only on the numbers of
vertices and their out-degrees, but is independent of any vertex itself and
the order of edges. If $(x_{1},x_{2},\cdots ,x_{n})\in \mathcal{A}$, then
for any two distinct indices $i,j$, taking $p_{i}=1,p_{k}=0$ for all $k\neq
i $ and $q_{j}=1,q_{k}=0$ for all $k\neq j$ in (\ref{13}),%
\begin{equation}
|x_{i}|\neq |x_{j}|,  \label{72}
\end{equation}%
and so the entries of any vector in $\mathcal{A}$ all have distinct absolute
values.

By Lemma \ref{P1}, we know that each admissible point $x$ gives arise to a
COSC GD-IFS
\begin{equation}
\digamma (x,\mathcal{b})  \label{ff1}
\end{equation}%
in a way of (\ref{S1}), (\ref{lu}), for any $\mathcal{b}$ in (\ref{bb2}),

The following says that the size of the admissible set $\mathcal{A}$ is very
large.

\begin{theorem}
\label{COSC except}Let $G=(V,E)$ be a strongly connected digraph with $%
d_{w}\geq 2$ for all $w\in V$, containing a vertex $u\in V$ outside a
directed circuit. With the notation as above, if $x\in \mathcal{A}$ then
the attractor $F_{u}$ of the
corresponding GD-IFS, $\digamma (x,\mathcal{b})$, defined as in $(\ref{ff1})$
for any $\mathcal{b}$, is not the attractor of any COSC standard IFS.
Moreover, with $n$ given as in $(\ref{nn})$,
\begin{equation}
\mathcal{L}^{n}(P\setminus \mathcal{A})=0,  \label{LL3}
\end{equation}%
that is, the complement of the set $\mathcal{A}$ in $P$ has $n$-dimensional
Lebesgue measure zero.
\end{theorem}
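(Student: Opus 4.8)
The plan is to derive the first assertion directly from Lemma \ref{main} applied to the GD-IFS $\digamma(x,\mathcal{b})$, and to prove (\ref{LL3}) by a countable-union argument over ``log-linear'' hypersurfaces combined with Fubini's theorem.

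For the first part I would fix $x\in\mathcal{A}$. Since $\mathcal{A}\subset P_1\subset P$ and $\ell$ is chosen as in (\ref{lu}), Lemma \ref{P1} applies: $\digamma(x,\mathcal{b})$ is a COSC GD-IFS, the similarity on the edge $e_i(k)$ has contraction ratio $x_i^{(k)}$, and the attractor $F_i$ at vertex $i$ has basic gap lengths exactly $\xi_i^{(1)},\dots,\xi_i^{(d_i-1)}$. Because every coordinate of a point of $P_1$ is non-zero (contraction-ratio coordinates lie in $(-1,1)\setminus\{0\}$ and every $\xi_i^{(k)}>0$ in $P_1$), the absolute contraction ratio set is $A=\{|x_i^{(k)}|\}$ and $\Lambda_w=\{\xi_w^{(k)}:1\le k\le d_w-1\}\neq\emptyset$, so condition $(ii')$ of Lemma \ref{main} holds at once. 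For $(i')$ and $(iii)$ I would invoke the admissibility relation (\ref{13}): by (\ref{72}) the coordinates of $x$ have pairwise distinct absolute values, which gives the ``distinct contraction ratios'' half of $(i')$ and also $A(L)\cap A(L^c)=\emptyset$; if $1\in A^{\mathbb{Q}^{\ast}}$, writing $1=\tprod_e|\rho_e|^{r_e}$ with a non-zero rational exponent vector and separating positive from negative exponents would produce two distinct nonnegative rational vectors $p\neq q$ (after padding with zeros in the $\xi$-coordinates) with $\tprod_i|x_i|^{p_i}=\tprod_i|x_i|^{q_i}$, contradicting (\ref{13}); and a ratio $\lambda_w^{(k)}/\lambda_z^{(m)}=|x_{i_1}|/|x_{i_2}|$ with $(w,k)\neq(z,m)$ uses two distinct $\xi$-coordinates $i_1\neq i_2$, so it cannot equal $1$ by (\ref{72}), while if it lay in $A^{\mathbb{Q}^{\ast}}$, say $|x_{i_1}|=|x_{i_2}|\tprod_e|\rho_e|^{r_e}$, clearing denominators would again violate (\ref{13}) (the coordinate $i_1$ occurs on one side with exponent $1$, the coordinate $i_2$ on the other). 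Hence $(iii)$ holds. Since $G$ is strongly connected and contains a directed circuit not through $u$, Lemma \ref{main} yields that $F_u$ is not the attractor of any COSC standard IFS.

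For (\ref{LL3}) I would split $P\setminus\mathcal{A}=(P\setminus P_1)\cup(P_1\setminus\mathcal{A})$. A point of $P$ lies outside $P_1$ (see (\ref{P1-2})) exactly when some $\xi_i^{(k)}$ vanishes, so $P\setminus P_1$ lies in the finite union of the coordinate hyperplanes $\{\xi_i^{(k)}=0\}$ and $\mathcal{L}^n(P\setminus P_1)=0$. If $x\in P_1\setminus\mathcal{A}$ then (\ref{13}) fails for $x$, and since all coordinates of $x$ are non-zero this means $\tprod_{i=1}^n|x_i|^{r_i}=1$ for some $r\in\mathbb{Q}^n\setminus\{0\}$; thus
\begin{equation*}
P_1\setminus\mathcal{A}\ \subseteq\ \bigcup_{r\in\mathbb{Q}^n\setminus\{0\}}H_r,\qquad H_r:=\Big\{y\in\mathbb{R}^n:\ y_i\neq0\ \forall i,\ \tprod_{i=1}^n|y_i|^{r_i}=1\Big\}.
\end{equation*}
For a fixed non-zero $r$, choosing $j$ with $r_j\neq0$ and solving for $|y_j|$ shows that, for every fixed value of $(y_i)_{i\neq j}$, the corresponding slice of $H_r$ has at most two points; Fubini's theorem then gives $\mathcal{L}^n(H_r)=0$, and since $\mathbb{Q}^n$ is countable, $\mathcal{L}^n(P_1\setminus\mathcal{A})=0$. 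Combining the two pieces proves (\ref{LL3}).

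The step I expect to require the most care is matching the single algebraic hypothesis (\ref{13}) on $x$ with the two multiplicative-independence conditions of Lemma \ref{main}: one must keep careful track of which coordinates of $x$ play the role of contraction ratios and which play the role of basic gap lengths, and verify that the relations forbidden by $(i')$ and $(iii)$ really are instances of (\ref{13}), in particular that the nonnegative rational exponent vectors one extracts are genuinely distinct. Everything else — the reduction to Lemma \ref{main} and the Fubini argument for (\ref{LL3}) — is routine.
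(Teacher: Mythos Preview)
Your proposal is correct and follows essentially the same route as the paper: verify conditions $(i')$, $(ii')$, $(iii)$ of Lemma \ref{main} from the single admissibility relation (\ref{13}), then cover $P\setminus\mathcal{A}$ by countably many hypersurfaces of measure zero. The only cosmetic differences are that the paper checks $(iii)$ by first proving $1\notin X^{\mathbb{Q}^{\ast}}$ (where $X=\{|x_i|\}_{i=1}^{n}$) and then reducing $\lambda_w^{(k)}/\lambda_z^{(m)}\in A^{\mathbb{Q}}$ to $1\in X^{\mathbb{Q}^{\ast}}$, whereas you argue directly; and for (\ref{LL3}) the paper phrases the slice argument as ``an at most $(n-1)$-dimensional manifold'' rather than invoking Fubini explicitly.
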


\begin{proof}
Let $\mathcal{b}=(b_{i}^{(1)})_{i\in V}$ for $b_{i}^{(1)}\in \mathbb{R}$ and
let $x=(x_{1},x_{2},\cdots ,x_{n})$ be an admissible point. By Lemma \ref{P1}%
, the corresponding GD-IFS, $\digamma (x,\mathcal{b})$ associated with the
vectors $x,\mathcal{b}$, satisfies the CSSC.\textbf{\ }We will show that
such a GD-IFS $\digamma (x,\mathcal{b})$ also satisfies all three conditions
$(i^{\prime })$, $(ii^{\prime })$, $(iii)$ in Lemma \ref{main}.

Clearly, the GD-IFS $\digamma (x,\mathcal{b})$ satisfies condition $%
(ii^{\prime })$ by noting that $\Lambda _{i}\neq \emptyset $ for each vertex
$i\in V$, since all the basic gap lengths sitting at vertex $i$ are $\xi
_{i}^{(1)},\xi _{i}^{(2)},\cdots ,\xi _{i}^{(d_{i}-1)}$ by Lemma \ref%
{P1}$(ii)$, which are strictly positive since the vector $x$
belongs to $P_{1}$.

We show condition $(i^{\prime })$. Let
\begin{equation*}
X:=\{|x_{i}|\}_{i=1}^{n}\subset (0,\infty ).
\end{equation*}%
We need to prove%
\begin{equation}
1\notin X^{\mathbb{Q}^{\ast }}.  \label{888}
\end{equation}%
For, suppose that $1=\tprod_{i=1}^{n}|x_{i}|^{s_{i}}$ for some $%
(s_{i})_{i=1}^{n}\in (\mathbb{Q}^{n})^{\ast }$, then%
\begin{equation*}
\tprod_{i=1}^{n}|x_{i}|^{s_{i}^{-}}=\tprod_{i=1}^{n}|x_{i}|^{s_{i}^{+}}
\end{equation*}%
where $s_{i}^{+}=\max \{s_{i},0\},\ s_{i}^{-}=\max \{-s_{i},0\}$ so that $%
s_{i}=s_{i}^{+}-s_{i}^{-}$. As not all $s_{i}$ are zero, we see that $%
(s_{i}^{+})_{i=1}^{n}\neq (s_{i}^{-})_{i=1}^{n}$ are two distinct
nonnegative rational vectors. This contradicts the admissibility of $x$ as
defined in (\ref{13}), thus (\ref{888}) is true.

By using (\ref{S1}) and (\ref{72}), all the contraction ratios of the COSC
GD-IFS $\digamma (x,\mathcal{b})$ have different absolute values. Since $%
1\notin A^{\mathbb{Q}^{\ast }}$ as $A^{\mathbb{Q}^{\ast }}\subset X^{\mathbb{%
Q}^{\ast }}$, where $A$ is the absolute contraction ratio set of $\digamma (x,%
\mathcal{b})$, condition $(i^{\prime })$ is satisfied.

For condition $(iii)$, suppose that there exists some$\ a\in A^{\mathbb{Q}}\
$such that $a=\lambda _{w}^{(k)}/\lambda _{z}^{(m)}$, where $\lambda
_{w}^{(k)}\in \Lambda _{w},\ \lambda _{z}^{(m)}\in \Lambda _{z}$. Then
\begin{equation*}
1=\lambda _{w}^{(k)}\left( \lambda _{z}^{(m)}\right) ^{-1}a^{-1}\in
\{|x_{i}|\}^{\mathbb{Q}^{\ast }}=X^{\mathbb{Q}^{\ast }}
\end{equation*}%
by noting that $\lambda _{w}^{(k)}=x_{i}>0$, $\lambda _{z}^{(m)}=x_{j}>0$
for some two indices $i\neq j$ in virtue of definition (\ref{pp}),
contradicting (\ref{888}). Thus condition $(iii)$ is also satisfied.

Therefore, by applying Lemma \ref{main}, the attractor $F_{u}$ of the GD-IFS
$\digamma (x,\mathcal{b})$ is not the attractor of any COSC standard IFS.

We finally show that $\mathcal{L}^{n}(P\setminus \mathcal{A})=0$. For this,
note that
\begin{equation}
\mathcal{L}^{n}(P\setminus P_{1})=0  \label{p-p1}
\end{equation}%
where $P_{1}$ is defined as in (\ref{P1-2}), since $P\setminus P_{1}$ lies
in the union of hyperplanes $\xi _{i}^{(k)}=0$. We just need to show $%
\mathcal{L}^{n}(P_{1}\setminus \mathcal{A})=0$. Let
\begin{equation*}
x=(x_{1},x_{2},\cdots ,x_{n})\in P_{1}\setminus \mathcal{A},
\end{equation*}%
that is, for some two distinct vectors $(p_{i})_{i=1}^{n}$ and $%
(q_{i})_{i=1}^{n}$ of nonnegative rationals,
\begin{equation*}
\tprod_{i=1}^{n}|x_{i}|^{p_{i}}=\tprod_{i=1}^{n}|x_{i}|^{q_{i}}.
\end{equation*}%
As $p_i \neq q_i$ for some $i$, say without loss of generality for $i=1$, then
\begin{equation*}
|x_{1}|=\tprod_{i=2}^{n}|x_{i}|^{(q_{i}-p_{i})/(p_{1}-q_{1})},
\end{equation*}%
from which, it follows that any vector in $P_{1}\setminus \mathcal{A}$ lies
in an at most $(n-1)$-dimensional manifold. Since there are countably many such
equations, the union of countably many such manifolds has $n$-dimensional
Lebesgue measure zero in $\mathbb{R}^{n}$.
\end{proof}

There are a plenty of examples of admissible points so that the assertions
of Theorem \ref{COSC except} hold. However, there are also some other
interesting examples such that the first assertion in Theorem \ref{COSC
except} still holds but points are not admissible.

\begin{example}
\textrm{\label{R2}The point $x$ given by (\ref{5p}) in Example \ref{EX1} is
not admissible in the sense of Definition \ref{ADS} for a certain class of }$%
\lambda $\textrm{. To see this, we need to show that (\ref{13}) fails for
suitable $\lambda $. In fact, if (\ref{13}) fails, then by definition (\ref%
{5p})%
\begin{eqnarray*}
\left( p_{1}^{-1}\right) ^{s_{1}}\left( p_{2}^{-1}\right) ^{s_{2}}\left(
p_{3}^{-1}\right) ^{s_{3}}\left( p_{4}^{-1}\right) ^{s_{4}}\lambda
^{s_{5}}\left( p_{5}\lambda \right) ^{s_{6}}
&=&\tprod_{i=1}^{6}|x_{i}|^{s_{i}}=\tprod_{i=1}^{6}|x_{i}|^{t_{i}} \\
&=&\left( p_{1}^{-1}\right) ^{t_{1}}\left( p_{2}^{-1}\right) ^{t_{2}}\left(
p_{3}^{-1}\right) ^{t_{3}}\left( p_{4}^{-1}\right) ^{t_{4}}\lambda
^{t_{5}}\left( p_{5}\lambda \right) ^{t_{6}}
\end{eqnarray*}%
for some two distinct vectors $(s_{i})_{i=1}^{6}$ and $(t_{i})_{i=1}^{6}$ of
nonnegative rationals. From this, we know that%
\begin{equation}
\lambda
^{(s_{5}-t_{5})+(s_{6}-t_{6})}=p_{1}^{s_{1}-t_{1}}p_{2}^{s_{2}-t_{2}}p_{3}^{s_{3}-t_{3}}p_{4}^{s_{4}-t_{4}}p_{5}^{-(s_{6}-t_{6})}.
\label{p4}
\end{equation}%
Thus, condition (\ref{13}) fails if $\lambda $ is chosen as in (\ref{p4}).
In particular, condition (\ref{13}) fails if $\lambda =\frac{1}{\sqrt{p_{5}}}
$ on taking $s_{i}=t_{i}$ for $i=1,2,3,4$ whilst $s_{i}=t_{i}+1$ for $i=5,6,$
}

\textrm{However, the GD-attractor }$F_{u}$, \textrm{associated with such a
non-admissible point }$x$, \textrm{is not the attractor of any COSC standard
IFS by Example \ref{EX1}. }
\end{example}

We further consider the situation by removing the `COSC'. We will apply
Corollary \ref{C2} and Theorem \ref{App} in the Appendix.

\begin{theorem}
\label{T2}Let $G=(V,E)$ be a strongly connected digraph with $d_{j}\geq 2$
for every vertex $j\in V$, containing a vertex $i\in V$ outside a directed
circuit. Let $x\in \mathcal{A}(\delta )$ (see definition $(\ref%
{AA1})$) satisfying that, for every vertex $j\neq i$ in $V$,%
\begin{eqnarray}
|x_{i}^{(1)}| &\in &\left( |x_{j}^{(1)}|+\cdots
+|x_{j}^{(m_{j})}|+(m_{j}-1)\delta ,|x_{j}^{(1)}|+\cdots
+|x_{j}^{(m_{j})}|+m_{j}\delta \right) ,  \label{X1} \\
1-|x_{i}^{(1)}| &\in &\left( |x_{j}^{(1)}|+\cdots
+|x_{j}^{(n_{j})}|+(n_{j}-1)\delta ,|x_{j}^{(1)}|+\cdots
+|x_{j}^{(n_{j})}|+n_{j}\delta \right) ,  \label{X2}
\end{eqnarray}%
where $m_{j},n_{j}\in [1,d_{j}-1]$ are integers. Let $\digamma
(x) $ be corresponding CSSC GD-IFS constructed as in Corollary \ref{C2},
with GD-attractors $(F_{j})_{j\in V}$. Then $F_{i}$ is not the attractor of
any standard IFS.
\end{theorem}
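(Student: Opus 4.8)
The plan is to present $\digamma(x)$ concretely through Corollary~\ref{C2}, read off the gap structure of its attractors, and then invoke Theorem~\ref{App} (that is, \cite[Theorem 1.4]{Boore2014}); conditions (\ref{X1}) and (\ref{X2}) are tailored so that its hypotheses hold. Since $x\in\mathcal{A}(\delta)$ as in (\ref{AA1}), Corollary~\ref{C2} applies: the GD-IFS $\digamma(x)$ satisfies the CSSC, each attractor satisfies $\mathrm{conv}\,F_j=[0,1]$, each $F_j$ has exactly $d_j-1$ basic gaps, all of the common length $\delta$ and positioned as in (\ref{62}); moreover by (\ref{61}) the leftmost first-level cylinder of $F_i$ is $S_{e_i(1)}([0,1])=[0,|x_i^{(1)}|]$, so $|x_i^{(1)}|\in F_i$ is the right endpoint of that cylinder and the first basic gap $(|x_i^{(1)}|,|x_i^{(1)}|+\delta)$ of $F_i$ sits immediately to its right. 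In particular $F_i$ is not an interval or a singleton.

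Next I would set $p:=|x_i^{(1)}|$ and $p':=1-|x_i^{(1)}|$ --- the two possible images of the distinguished point $|x_i^{(1)}|\in F_i$ under an orientation-preserving, respectively orientation-reversing, similarity of $\mathbb{R}$; the reflected point $p'$ is exactly why (\ref{X2}) must accompany (\ref{X1}). Conditions (\ref{X1}) and (\ref{X2}) say precisely that, for every vertex $j\neq i$, the point $p$ lies strictly inside the $m_j$-th basic gap of $F_j$ and $p'$ lies strictly inside the $n_j$-th basic gap of $F_j$; by (\ref{62}) both of these open intervals are contained in $(\mathrm{conv}\,F_j)\setminus F_j$, whence
\[
p\notin F_j\quad\text{and}\quad p'\notin F_j\qquad\text{for every }j\neq i .
\]
From this it follows at once that $F_i$ cannot be carried onto any $F_j$ with $j\neq i$ by a similarity of $\mathbb{R}$: such a similarity would send $\mathrm{conv}\,F_i=[0,1]$ onto $\mathrm{conv}\,F_j=[0,1]$, hence would be the identity or $y\mapsto 1-y$, and would then map $|x_i^{(1)}|\in F_i$ to the point $p$ or $p'$ of $F_j$, contradicting the display. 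These facts, together with the standing assumptions that $G$ is strongly connected (so every vertex is reachable from $i$) and contains a directed circuit that avoids $i$, are exactly the hypotheses of Theorem~\ref{App}; applying it gives that $F_i$ is not the attractor of any standard IFS, completing the proof.

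The real work is carried by Theorem~\ref{App} rather than by the verification above, which is a direct unwinding of (\ref{X1}) and (\ref{X2}): Theorem~\ref{App} controls a hypothetical presentation of $F_i$ by a standard IFS on which \emph{no separation condition is assumed}, the regime in which the COSC arguments of Sections~\ref{Sect2} and \ref{Sect4} no longer apply. In outline, even allowing overlaps, the leftmost (and rightmost) cylinder of a self-similar subset of $\mathbb{R}$ with convex hull $[0,1]$ is forced --- after descending through the first-level structure of $F_i$ along the leftmost edges --- to coincide with a congruent copy of some attractor $F_j$ lying flush inside it; the non-membership $p,p'\notin F_j$ for $j\neq i$ then excludes every vertex but $i$, while the circuit avoiding $i$, used with strong connectivity, rules out the remaining self-referential case. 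Hence on our side the only delicate point is to confirm that the uniform data supplied by $x\in\mathcal{A}(\delta)$ (all convex hulls equal to $[0,1]$, all basic gaps of length $\delta$, the distinguished point $|x_i^{(1)}|$ read off from (\ref{61})--(\ref{62})) is in precisely the form that Theorem~\ref{App} requires; the remainder is bookkeeping.
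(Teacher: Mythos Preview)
Your proof is correct and follows essentially the same route as the paper: verify via Corollary~\ref{C2} that $\digamma(x)$ satisfies the CSSC with $\mathrm{conv}\,F_j=[0,1]$ and all basic gaps of length $\delta$, then use the point $|x_i^{(1)}|\in F_i$ together with (\ref{X1}) and (\ref{X2}) to check that $F_i\nsubseteq F_j$ and $1-F_i\nsubseteq F_j$ for $j\neq i$, which is exactly condition~(3) of Theorem~\ref{App}. Your additional remark that no similarity carries $F_i$ onto $F_j$ is true but not needed, and your final paragraph sketching the mechanism of Theorem~\ref{App} is commentary rather than proof; the core argument matches the paper's.
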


\begin{proof}
Let $x\in \mathcal{A}(\delta )$. Recall that the corresponding GD-IFS, $%
\digamma (x)=\{S_{e_{i}(k)}\}_{i\in V,1\leq k\leq d_{i}}$ associated with
point $x$, is given by (\ref{S5}), where $\{b_{i}^{(k+1)}\}_{i\in V,1\leq
k\leq d_{i}-1}$ are real numbers in $(0,1)\ $defined as in (\ref{bb5}) ($%
b_{i}^{(1)}=0$ for every $i\in V$).

We apply Theorem \ref{App} in the Appendix to prove this theorem. Clearly,
conditions $(1)$, $(2)$ in Theorem \ref{App} are satisfied. \ In order to
verify condition $(3)$, we need to show that for every
vertex $j\neq i$,
\begin{equation}
F_{i}\nsubseteq F_{j}  \label{S1-3}
\end{equation}%
and
\begin{equation}
1-F_{i}\nsubseteq F_{j}.  \label{S1-4}
\end{equation}%
We first show (\ref{S1-3}). Indeed, note that the point $|x_{i}^{(1)}|$
belongs to the attractor $F_{i}$ by Corollary \ref{C2}$(ii)$. However, this
point does not belong to any attractor $F_{j}$ ($j\neq i$), since it falls
in some basic gap (see formula (\ref{62})) of $F_{j}$ by using assumption (%
\ref{X1}).

Similarly, the point $1-|x_{i}^{(1)}|$ belongs to the set $1-F_{i}$ but does
not belong to any attractor $F_{j}$ ($j\neq i$), since it also falls in some
basic gap of $F_{j}$ by using assumption (\ref{X2}), and thus (\ref{S1-4})
is also true, as required.
\end{proof}

To illustrate Theorem \ref{T2}, we give an example. Let $i$ be a fixed
vertex in$\ V=\{1,2,\cdots ,N\}$. Let $x\in \mathcal{A}%
(\delta )$ satisfy
\begin{equation}
|x_{i}^{(1)}|\in \left( |x_{j}^{(1)}|,|x_{j}^{(1)}|+\delta \right) \text{ \
and \ }1-|x_{i}^{(1)}|\in \left( |x_{j}^{(1)}|,|x_{j}^{(1)}|+\delta \right)
\text{ for any }j\neq i, \label{X3}
\end{equation}%
so that both conditions (\ref{X1}), (\ref{X2}) are satisfied with $%
m_{j}=n_{j}=1$. To secure (\ref{X3}), we let
\begin{equation*}
0<\delta <\min_{j\in V}\left\{ \frac{1}{2(d_{j}-1)}\right\} .
\end{equation*}%
By the definition of $\mathcal{A}(\delta )$, any vector $x\in\mathcal{A}%
(\delta )$ satisfies that for all $j\in V$,
\begin{equation}
|x_{j}^{(1)}|+\cdots +|x_{j}^{(d_{j})}|=1-\left( d_{j}-1\right) \delta >%
\frac{1}{2}\text{.}  \label{mtg}
\end{equation}%
Now we first choose $\{x_{j}^{(1)}\}_{j\in V}$ by
\begin{equation*}
|x_{i}^{(1)}|=\frac{1}{2}\text{ and }|x_{j}^{(1)}|\in \left( \frac{1}{2}%
-\delta ,\frac{1}{2}\right) \text{ for any }j\neq i,
\end{equation*}%
and then we choose $\{x_{j}^{(2)},x_{j}^{(3)},\cdots
,x_{j}^{(d_{j})}\}_{j\in V}$ to be any numbers such that (\ref{mtg}) is
satisfied. Such a class of points satisfy condition (\ref{X3}), which
implies that conditions (\ref{X1}), (\ref{X2}) are both satisfied.

\section{Appendix}

In this appendix we derive some general properties and secondary results
 that are used in the main part of the paper.

The following proposition on ordering integer lattice points is used in
Lemma \ref{GDR}. Recall that $\mathbb{Z}%
_{+}$ denotes the set of all nonnegative integers.

\begin{proposition}
Let $B\subset\mathbb{Z}_{+}^{n}$ be an infinite set. Then $B$ contains two
distinct vectors $\overrightarrow{x}\leq \overrightarrow{y}$ under the
partial order defined by inequality of all coordinates.\label{part}
\end{proposition}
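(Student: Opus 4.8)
The plan is to deduce the Proposition from the following stronger statement: every sequence $(\overrightarrow{b}_k)_{k\ge 1}$ in $\mathbb{Z}_{+}^{n}$ has an infinite subsequence $(\overrightarrow{b}_{k_t})_{t\ge 1}$ that is nondecreasing in each coordinate simultaneously, i.e.\ $\overrightarrow{b}_{k_1}\le \overrightarrow{b}_{k_2}\le\cdots$ under the coordinatewise partial order. Granting this, since $B$ is an infinite subset of the countable set $\mathbb{Z}_{+}^{n}$ I may enumerate its (pairwise distinct) elements as $\overrightarrow{b}_{1},\overrightarrow{b}_{2},\dots$, extract such a subsequence, and put $\overrightarrow{x}=\overrightarrow{b}_{k_1}$, $\overrightarrow{y}=\overrightarrow{b}_{k_2}$; then $\overrightarrow{x}\le\overrightarrow{y}$ and $\overrightarrow{x}\ne\overrightarrow{y}$ since the enumerated elements are distinct. (This is of course a special case of Dickson's lemma / the fact that $\mathbb{Z}_{+}^{n}$ is a well-quasi-order, but the direct argument is short.)

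To prove the monotone-subsequence statement I would induct on $n$. For $n=1$, given a sequence $(a_k)_{k\ge1}$ in $\mathbb{Z}_{+}$, define indices recursively: let $k_1$ attain $\min_{k\ge 1}a_k$, which exists since $\mathbb{Z}_{+}$ is well-ordered, and, having chosen $k_t$, let $k_{t+1}>k_t$ attain $\min_{k>k_t}a_k$. Because the index sets $\{k:k>k_t\}$ decrease with $t$, the corresponding minima are nondecreasing, so $a_{k_1}\le a_{k_2}\le\cdots$, giving the desired subsequence. For the inductive step, apply the one-dimensional case to the sequence of first coordinates of $(\overrightarrow{b}_k)$ to obtain a subsequence along which the first coordinate is nondecreasing; then apply the inductive hypothesis to the last $n-1$ coordinates of that subsequence to obtain a further subsequence along which coordinates $2,\dots,n$ are all nondecreasing. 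Along this final subsequence every coordinate is nondecreasing, completing the induction.

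There is essentially no real obstacle beyond handling the base case correctly: the one point needing care is the appeal to well-orderedness of $\mathbb{Z}_{+}$ (equivalently, that $\mathbb{Z}_{+}$ admits no infinite strictly decreasing sequence), which is exactly what keeps the recursive minimum construction running, together with the trivial but essential observation that any two distinct terms of a coordinatewise-nondecreasing subsequence are automatically comparable and unequal.
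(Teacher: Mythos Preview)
Your proof is correct. It takes a different route from the paper's proof, though both use induction on $n$.

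The paper first splits into two cases according to whether the set $S=\{\min_i x_i:\overrightarrow{x}\in B\}$ is bounded. If $S$ is unbounded, two comparable vectors are found directly. If $S$ is bounded by $N$, then by pigeonhole some infinite subset $B(\beta,m)\subset B$ has all its vectors attaining their minimum coordinate value $\beta$ in the same position $m$; deleting that coordinate reduces to $\mathbb{Z}_+^{n-1}$ and the inductive hypothesis applies. Your argument instead proves the stronger statement that every sequence in $\mathbb{Z}_+^n$ has an infinite coordinatewise-nondecreasing subsequence (i.e.\ that $(\mathbb{Z}_+^n,\le)$ is a well-quasi-order), by repeatedly thinning: first extract a subsequence monotone in coordinate $1$ using the well-ordering of $\mathbb{Z}_+$, then apply the inductive hypothesis to coordinates $2,\dots,n$. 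What your approach buys is a cleaner induction with no case split and a stronger conclusion (an infinite chain rather than a single comparable pair, which is exactly Dickson's lemma); the paper's proof is slightly more hands-on but avoids invoking the subsequence machinery.
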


\begin{proof}
We write $\overrightarrow{x}:=(x_{i})_{i=1}^{n}\in \mathbb{Z}_{+}^{n}$.
Consider the set of integers:
$$S := \big\{\min \{x_{i}\}_{i=1}^{n}: \overrightarrow{x}\in B\big\}.$$
If $S$ is unbounded, then we are done by fixing some vector $%
\overrightarrow{x}$ and taking $\overrightarrow{y}=(y_{i})_{i=1}^{n}\in
B\subset \mathbb{Z}_{+}^{n}$ with $\min \{y_{i}\}_{i=1}^{n}>\max
\{x_{i}\}_{i=1}^{n}$ so that
\begin{equation*}
\overrightarrow{x}<\overrightarrow{y}.
\end{equation*}%
Otherwise $S$ is bounded by an integer $N$ in which case we prove the proposition
 by induction on $n$. When $n=1$ it is
trivial. Assume that the proposition holds for $n-1$. For each $1\leq j\leq n$
and each $\alpha \in \{0,1,\cdots ,N\}$, define
\begin{equation*}
B(\alpha ,j):=\{\overrightarrow{x}\in B:x_{j}=\min
\{x_{i}\}_{i=1}^{n}=\alpha \},
\end{equation*}%
a (possibly empty) collection of all vectors in $B$ whose $j$-th entries equal to the same
number $\alpha $ and take the smallest value. Since
\begin{equation*}
\mathop{\textstyle \bigcup }_{j=1}^{n}\mathop{\textstyle \bigcup }_{\alpha
=0}^{N}B(\alpha ,j)=B,
\end{equation*}%
we can assume that some $B(\alpha ,j)$, say $B(\beta ,m)$,
contains infinitely many elements. Deleting the $m$th coordinate $%
x_{m}=\beta $ of all the vectors in such a set $B(\beta ,m)$, we obtain an
infinite set $B^{\prime }(\beta ,m)\subset \mathbb{Z}_{+}^{n-1}$, and by
induction assumption, $B^{\prime }(\beta ,m)\subset \mathbb{Z}_{+}^{n-1}$
has two distinct vectors $\overrightarrow{x^{\prime }}\leq \overrightarrow{%
y^{\prime }}$. Inserting the $m$th coordinate $x_{m}=\beta $ into $%
\overrightarrow{x^{\prime }},\ \overrightarrow{y^{\prime }}$ to get $%
\overrightarrow{x},\ \overrightarrow{y}$ respectively, we obtain two
distinct vectors $\overrightarrow{x}\leq \overrightarrow{y}$ in $B(\beta
,m)\subset B$, showing the assertion for $\mathbb{Z}_{+}^{n}$.
\end{proof}

The next proposition generalises a well-known result for standard IFSs
to GD-IFSs.

\begin{proposition}
\label{APP2}Let $G=(V,E)$ be a digraph and $(F_{u})_{u\in V}$ be the
GD-attractors of a GD-IFS $\digamma =(V,E,(S_{e})_{e\in E})$ based on it. If
there exist non-empty sets $\left( U_{u}\right) _{u\in V}$ such
that
\begin{equation}
\mathop{\textstyle \bigcup }_{v\in V}\mathop{\textstyle \bigcup }_{e\in
E_{uv}}S_{e}\left( U_{v}\right) \subset U_{u}\text{ for each }u\in V
\label{app1}
\end{equation}%
then $F_{u}\subset \overline{U_{u}}$, the closure of the set $U_{u}$, for
each $u\in V$.
\end{proposition}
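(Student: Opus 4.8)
The plan is to mimic the classical argument showing that the attractor of an IFS is contained in the closure of any invariant open (or bounded) set, adapting it to the graph-directed setting where we have a family of sets indexed by vertices. The essential feature is that the GD-attractors $(F_u)_{u\in V}$ are the \emph{decreasing intersection} of the iterates of any list of sets satisfying an invariance hypothesis of the form \eqref{001}, which is exactly what \eqref{app1} provides once we pass to closures.

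First I would set $I_u := \overline{U_v}$, wait---more carefully, set $I_u := \overline{U_u}$ for each $u \in V$. Taking closures in \eqref{app1} and using that each $S_e$ is continuous (indeed a similarity), hence $S_e(\overline{U_v}) \subseteq \overline{S_e(U_v)}$, together with the fact that a finite union of closures is the closure of the union, I get
\begin{equation*}
\mathop{\textstyle \bigcup}_{v\in V}\mathop{\textstyle \bigcup}_{e\in E_{uv}}S_e\big(\overline{U_v}\big) \subseteq \overline{\mathop{\textstyle \bigcup}_{v\in V}\mathop{\textstyle \bigcup}_{e\in E_{uv}}S_e(U_v)} \subseteq \overline{U_u},
\end{equation*}
that is $I_u^1 \subseteq I_u$ in the notation of \eqref{000}. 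However, there is a subtlety: the machinery around \eqref{003} that expresses $F_u = \bigcap_{m\ge 1} I_u^m$ is stated for \emph{compact} sets $(I_u)$, whereas here the $U_u$ (and hence their closures) are only assumed non-empty, not bounded. So the plan is first to reduce to the bounded case.

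For the reduction, I would intersect each $U_u$ with a large ball. Fix any reference point, e.g. pick $p_u \in F_u$ for each $u$ (the $F_u$ are non-empty compact). For $R > 0$ let $U_u^R := U_u \cap B(p_u, R)$. One checks that for $R$ large enough the truncated family still satisfies \eqref{app1}: since each $S_e$ is a contraction (ratio $|\rho_e| < 1$) and the digraph is finite, there is a uniform bound so that $S_e(B(p_v,R)) \subseteq B(p_u, R)$ once $R$ exceeds a fixed constant depending only on the $\rho_e$ and the translation parts; combined with $S_e(U_v) \subseteq U_u$ this gives $S_e(U_v^R) \subseteq U_u^R$. (Alternatively, and perhaps more cleanly: the argument of \cite{MauldinWil1988} establishing the GD-attractor only needs the $I_u$ to be non-empty and the iterates to be uniformly bounded, which the contractivity of the $S_e$ forces automatically; if the excerpt's Theorem~4.3.5-type statement is invoked verbatim one should truncate, otherwise one cites the boundedness of iterates directly.) Once we are in the bounded setting, $\overline{U_u^R}$ is compact, \eqref{001} holds, and \eqref{003} gives $F_u = \bigcap_{m\ge1} (U_u^R)_{\cdot}^m \subseteq \overline{U_u^R} \subseteq \overline{U_u}$, which is the desired conclusion (taking $R$ large enough that $F_u \subseteq B(p_u,R)$ so that no information is lost, though in fact we only need the inclusion $F_u \subseteq \overline{U_u}$ and any valid $R$ gives it).

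The main obstacle I anticipate is precisely this boundedness gap: the clean formula $F_u = \bigcap_m I_u^m$ as quoted is for compact $I_u$, so one must either justify the truncation carefully (showing the finitely many contraction-ratio bounds combine to keep a common large ball invariant) or invoke directly the fact that iterates of a GD-IFS of contractions are automatically bounded. Everything else---pushing closures through continuous maps, distributing closure over finite unions, and the monotone-intersection step---is routine. I would present the truncation lemma in one line and then quote \eqref{003}.
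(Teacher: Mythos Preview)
Your approach is essentially the same as the paper's: set $I_u := \overline{U_u}$, verify the nesting condition \eqref{001} by pushing closures through the similarities, and then invoke \eqref{003} to conclude $F_u \subset \overline{U_u}$. The paper's proof does exactly this in three lines and simply cites \eqref{003} without further comment.

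You are in fact being more careful than the paper. The formula \eqref{003} is set up in the text explicitly for \emph{compact} sets $(I_u)_{u\in V}$, whereas Proposition~\ref{APP2} assumes only that the $U_u$ are non-empty, so $\overline{U_u}$ need not be compact. The paper glosses over this point (presumably because in the only application, the $U_u$ are bounded OSC sets). Your truncation $U_u^R := U_u \cap B(p_u,R)$ is a valid way to close this gap: since there are finitely many edges and each $S_e$ is a strict contraction, the condition $|S_e(p_v)-p_u| + |\rho_e|R \le R$ holds for all $e$ once $R$ is large enough, giving $S_e(U_v^R) \subset U_u^R$, and then \eqref{003} applies to the compact closures $\overline{U_u^R}$. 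An alternative one-line fix, avoiding truncation, is to use the coding map directly: for any infinite path $e_1 e_2 \cdots$ from $u$ and any choice of $x_v \in U_v$, the sequence $S_{e_1\cdots e_m}(x_{\omega(e_m)})$ lies in $U_u$ by iterated application of \eqref{app1} and converges to a point of $F_u$, so $F_u \subset \overline{U_u}$.
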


\begin{proof}
Set $I_{u}:=\overline{U_{u}}$ for each $u\in V$. Let $I_{u}^{m}$ be defined
by (\ref{000}) for $m\geq 1$. Then the inclusion (\ref{001}) is satisfied,
since
\begin{eqnarray*}
I_{u}^{1} &=&\mathop{\textstyle \bigcup }_{e\in E_{u}^{1}}S_{e}\left(
I_{\omega (e)}\right) =\mathop{\textstyle \bigcup }_{e\in
E_{u}^{1}}S_{e}\left( \overline{U_{\omega (e)}}\right) =\mathop{\textstyle
\bigcup }_{e\in E_{u}^{1}}\overline{S_{e}\left( U_{\omega (e)}\right) } \\
&\subset &\overline{\mathop{\textstyle \bigcup }_{e\in E_{u}^{1}}S_{e}\left(
U_{\omega (e)}\right) }\subset \overline{U_{u}}=I_{u}\text{ \ (using (\ref%
{app1})),}
\end{eqnarray*}%
thus showing that $F_{u}\subset I_{u}^{1}\subset \overline{U_{u}}$ by virtue
of (\ref{003}). The proof is complete.
\end{proof}

The directed paths in GD-IFSs play the same role as the finite-length words
in standard IFSs, as the following proposition suggests. We will frequently
use the following fact that, for any $u\in V$ and $m\geq 1$,
\begin{equation}
F_{u}=\mathop{\textstyle \bigcup }_{\mathbf{e}\in E_{u}^{m}}S_{\mathbf{e}%
}(F_{\omega (\mathbf{e})}),  \label{ee1}
\end{equation}%
by repeatedly using definition (\ref{gdattract}) (recall that $E_{u}^{m}$ is
the totality of all paths of length $m$ leaving $u$). The following proposition
concerns the disjointness of images of components under mappings
corresponding to different words.

\begin{proposition}
\label{CSC}Let $G=(V,E)$ be a digraph and $(F_{u})_{u\in V}$ be the
GD-attractors of a GD-IFS $\digamma =(V,E,(S_{e})_{e\in E})$ based on it.
Assume that each $F_{u}$ is not a singleton. Let $\mathbf{e}^{\prime }$, $%
\mathbf{e}^{\prime \prime }$ be two directed paths with $\mathbf{e}^{\prime
\prime }\neq \mathbf{e}^{\prime }\mathbf{e}$ if $\vert
\mathbf{e}^{\prime }|\leq |\mathbf{e}^{\prime \prime }|$ (where $\mathbf{e}$
is a directed path which may be empty). If $\digamma $ satisfies the COSC on
$%
\mathbb{R}
$, then the interiors of $S_{\mathbf{e}^{\prime }}(\mathrm{conv}\,F_{\omega (%
\mathbf{e}^{\prime })})$ and $S_{\mathbf{e}^{\prime \prime }}(\mathrm{conv}%
\,F_{\omega (\mathbf{e}^{\prime \prime })})$ are disjoint. Similarly, if $%
\digamma $ satisfies the CSSC then $S_{\mathbf{e}^{\prime }}(F_{\omega (%
\mathbf{e}^{\prime })})$ and $S_{\mathbf{e}^{\prime \prime }}(F_{\omega (%
\mathbf{e}^{\prime \prime })})$ are disjoint.
\end{proposition}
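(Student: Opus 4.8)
The plan is to reduce the statement about arbitrary pairs of directed paths to the basic case of a single application of the COSC/CSSC, by peeling off a common prefix. First I would handle the case $\mathbf{e}'=\mathbf{e}''$, which is vacuous, so assume the two paths differ. Write $\mathbf{e}'=f_1f_2\cdots f_p$ and $\mathbf{e}''=g_1g_2\cdots g_q$ with $p\le q$ (without loss of generality). Let $r\ge 0$ be the length of the longest common prefix of $\mathbf{e}'$ and $\mathbf{e}''$, so $f_i=g_i$ for $1\le i\le r$ and, if $r<p$, then $f_{r+1}\ne g_{r+1}$. The hypothesis $\mathbf{e}''\ne\mathbf{e}'\mathbf{e}$ for any (possibly empty) $\mathbf{e}$ precisely rules out the case $r=p$; hence $r<p\le q$, and in particular both $f_{r+1}$ and $g_{r+1}$ exist and are distinct edges. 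Since the paths are consecutive, the terminal vertex of $f_r$ (equivalently $g_r$) — call it $w$ — is the common initial vertex of $f_{r+1}$ and $g_{r+1}$.

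Next I would set $\mathbf{h}:=f_1\cdots f_r$ (the empty path if $r=0$), $\mathbf{a}:=f_{r+1}\cdots f_p$ and $\mathbf{b}:=g_{r+1}\cdots g_q$, so that $\mathbf{e}'=\mathbf{h}\mathbf{a}$, $\mathbf{e}''=\mathbf{h}\mathbf{b}$, and $S_{\mathbf{e}'}=S_{\mathbf{h}}\circ S_{\mathbf{a}}$, $S_{\mathbf{e}''}=S_{\mathbf{h}}\circ S_{\mathbf{b}}$. Because $S_{\mathbf{h}}$ is an injective similarity of $\mathbb{R}$, it maps interiors to interiors and commutes with disjointness; thus it suffices to show that the interiors of $S_{\mathbf{a}}(\mathrm{conv}\,F_{\omega(\mathbf{a})})$ and $S_{\mathbf{b}}(\mathrm{conv}\,F_{\omega(\mathbf{b})})$ are disjoint (where $\omega(\mathbf{a})=\omega(\mathbf{e}')$, $\omega(\mathbf{b})=\omega(\mathbf{e}'')$). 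Now $\mathbf{a}$ and $\mathbf{b}$ both start at $w$ and have distinct first edges $f_{r+1},g_{r+1}\in E_w^{1}$. Writing $\mathbf{a}=f_{r+1}\mathbf{a}'$ and $\mathbf{b}=g_{r+1}\mathbf{b}'$, we have
\[
S_{\mathbf{a}}(\mathrm{conv}\,F_{\omega(\mathbf{a})})=S_{f_{r+1}}\big(S_{\mathbf{a}'}(\mathrm{conv}\,F_{\omega(\mathbf{a})})\big)\subseteq S_{f_{r+1}}(\mathrm{conv}\,F_{\omega(f_{r+1})}),
\]
using $S_{\mathbf{a}'}(F_{\omega(\mathbf{a})})\subseteq F_{\omega(f_{r+1})}$ from (\ref{ee1}) together with the fact that $S_{\mathbf{a}'}$ maps the convex hull of a set into the convex hull of its image; similarly for $\mathbf{b}$. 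Since $f_{r+1}\ne g_{r+1}$ are two distinct edges leaving $w$, the COSC gives that the interiors of $S_{f_{r+1}}(\mathrm{conv}\,F_{\omega(f_{r+1})})$ and $S_{g_{r+1}}(\mathrm{conv}\,F_{\omega(g_{r+1})})$ are disjoint, and therefore so are the interiors of the two smaller intervals above. Applying $S_{\mathbf{h}}$ back gives the claim for $\mathbf{e}',\mathbf{e}''$.

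For the CSSC statement I would run the identical argument with closed sets in place of interiors: $S_{\mathbf{a}}(F_{\omega(\mathbf{a})})=S_{f_{r+1}}\big(S_{\mathbf{a}'}(F_{\omega(\mathbf{a})})\big)\subseteq S_{f_{r+1}}(F_{\omega(f_{r+1})})$ by (\ref{ee1}), and likewise for $\mathbf{b}$; the CSSC gives $S_{f_{r+1}}(\mathrm{conv}\,F_{\omega(f_{r+1})})\cap S_{g_{r+1}}(\mathrm{conv}\,F_{\omega(g_{r+1})})=\emptyset$, which a fortiori forces $S_{\mathbf{a}}(F_{\omega(\mathbf{a})})\cap S_{\mathbf{b}}(F_{\omega(\mathbf{b})})=\emptyset$, and applying the injection $S_{\mathbf{h}}$ preserves disjointness. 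The only slightly delicate point — the main obstacle, such as it is — is the bookkeeping that the asymmetric hypothesis on $\mathbf{e}',\mathbf{e}''$ (requiring $\mathbf{e}''\neq\mathbf{e}'\mathbf{e}$ only when $|\mathbf{e}'|\le|\mathbf{e}''|$) is exactly what guarantees $r<p$, so that after removing the common prefix the shorter remaining path is genuinely nonempty with a first edge to compare; once that is noted, everything reduces to the one-step COSC/CSSC separation and the trivial observations that similarities of $\mathbb{R}$ are injective and that $S(\mathrm{conv}\,A)=\mathrm{conv}\,S(A)$.
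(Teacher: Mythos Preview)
Your proof is correct and follows essentially the same route as the paper's: identify the longest common prefix $\mathbf{h}$ of the two paths, observe that the hypothesis forces both remaining tails to be nonempty with distinct first edges leaving a common vertex, nest each image inside the corresponding first-edge image via $S_{\mathbf{e}}(\mathrm{conv}\,F_{\omega(\mathbf{e})})\subset\mathrm{conv}\,F_{\alpha(\mathbf{e})}$, and then invoke the one-step COSC/CSSC separation. The only organizational difference is that you explicitly strip off $S_{\mathbf{h}}$ first and reapply it at the end, whereas the paper keeps the prefix attached throughout and proves the intermediate disjointness~(\ref{con2}) for paths of the form $\mathbf{e}e_1,\mathbf{e}e_2$ directly; this is purely cosmetic.
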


\begin{proof}
By (\ref{ee1}), for any path $\mathbf{e}$, we have $S_{\mathbf{e}}(F_{\omega
(\mathbf{e})})\subset F_{\alpha (\mathbf{e})}$, where $\alpha (\mathbf{e})$
denotes the initial vertex of path $\mathbf{e}$. As $S_{%
\mathbf{e}}$ is a similarity on $%
\mathbb{R}
$, taking the convex hulls gives that
\begin{equation*}
S_{\mathbf{e}}(\mathrm{conv}\,F_{\omega (\mathbf{e})})=\mathrm{conv}\text{ }%
S_{\mathbf{e}}(F_{\omega (\mathbf{e})})\subset \mathrm{conv}\,F_{\alpha (%
\mathbf{e})},
\end{equation*}%
from which, we see that, for any path $\mathbf{e}_{1}\mathbf{e}_{2}$
(meaning that $\omega (\mathbf{e}_{1})=\alpha (\mathbf{e}_{2})$, the
terminal of $\mathbf{e}_{1}$ is the initial of $\mathbf{e}_{2}$),
\begin{equation}
S_{\mathbf{e}_{1}\mathbf{e}_{2}}(\mathrm{conv}\,F_{\omega (\mathbf{e}%
_{2})})=S_{\mathbf{e}_{1}}(S_{\mathbf{e}_{2}}(\mathrm{conv}\,F_{\omega (%
\mathbf{e}_{2})}))\subset S_{\mathbf{e}_{1}}(\mathrm{conv}\,F_{\alpha (%
\mathbf{e}_{2})})=S_{\mathbf{e}_{1}}(\mathrm{conv}\,F_{\omega (\mathbf{e}%
_{1})}).  \label{con}
\end{equation}

Assume now that $\digamma $ satisfies the COSC. By (\ref{con3}), one can take%
\begin{equation*}
U_{u}=\mathrm{int}(\mathrm{conv}\,F_{u})\text{ \ for each }u\in V,
\end{equation*}%
which is non-empty by our assumption that $F_{u}$ is not a singleton.

For any two paths $\mathbf{e}e_{1}$, $\mathbf{e}e_{2}$ with common path $%
\mathbf{e}$ and distinct edges $e_{1},e_{2}$, the interiors of two intervals%
\begin{equation}
\text{int(}S_{\mathbf{e}e_{1}}(\mathrm{conv}\,F_{\omega (e_{1})})\text{)}%
\cap \text{int(}S_{\mathbf{e}e_{2}}(\mathrm{conv}\,F_{\omega (e_{2})})\text{)%
}=\emptyset  \label{con2}
\end{equation}%
by using the COSC, since
\begin{equation*}
S_{\mathbf{e}e_{1}}(\mathrm{conv}\,F_{\omega (e_{1})})=S_{\mathbf{e}%
}(S_{e_{1}}(\mathrm{conv}\,F_{\omega (e_{1})}))\text{ \ and \ }S_{\mathbf{e}%
e_{2}}(\mathrm{conv}\,F_{\omega (e_{2})})=S_{\mathbf{e}}(S_{e_{2}}(\mathrm{%
conv}\,F_{\omega (e_{2})}))
\end{equation*}%
and the interiors of $S_{e_{1}}(\mathrm{conv}\,F_{\omega (e_{1})})$ and $%
S_{e_{2}}(\mathrm{conv}\,F_{\omega (e_{2})})$ are disjoint as the edges $%
e_{1},e_{2}$ have the same initial vertex, namely the terminal of path $\mathbf{e}$.

Let $\mathbf{e}$ be the longest common path of $\mathbf{e}^{\prime \prime }$
and $\mathbf{e}^{\prime }$ (which may be empty). Write $\mathbf{e}^{\prime }=%
\mathbf{e}e_{1}\mathbf{p}_{1}$ and $\mathbf{e}^{\prime \prime }=\mathbf{e}%
e_{2}\mathbf{p}_{2}$, where $e_{1}\neq e_{2}$ are two distinct edges and $%
\mathbf{p}_{1}$, $\mathbf{p}_{2}$ are some paths (possibly empty). By (\ref%
{con}),
\begin{eqnarray*}
S_{\mathbf{e}^{\prime }}(\mathrm{conv}\,F_{\omega (\mathbf{e}^{\prime })})
&=&S_{\mathbf{e}e_{1}\mathbf{p}_{1}}(\mathrm{conv}\,F_{\omega (\mathbf{p}%
_{1})})\subset S_{\mathbf{e}e_{1}}(\mathrm{conv}\,F_{\omega (e_{1})}), \\
S_{\mathbf{e}^{\prime \prime }}(\mathrm{conv}\,F_{\omega (\mathbf{e}^{\prime
\prime })}) &=&S_{\mathbf{e}e_{2}\mathbf{p}_{2}}(\mathrm{conv}\,F_{\omega (%
\mathbf{p}_{2})})\subset S_{\mathbf{e}e_{2}}(\mathrm{conv}\,F_{\omega
(e_{2})}),
\end{eqnarray*}%
thus the interiors of $S_{\mathbf{e}^{\prime }}(\mathrm{conv}\,F_{\omega (%
\mathbf{e}^{\prime })})$ and $S_{\mathbf{e}^{\prime \prime }}(\mathrm{conv}%
\,F_{\omega (\mathbf{e}^{\prime \prime })})$ are disjoint by using (\ref%
{con2}).

The assertion for the CSSC is similar. The proof is complete.
\end{proof}

The following was essentially proved in \cite[Lemma 5.1]{Boore2014}\textbf{,
}except that we also consider the COSC case.

\begin{theorem}
\label{5.1} Let $G=(V,E)$ be a strongly connected digraph with $d_{v}\geq 2$
for all $v\in V$. If every directed circuit goes through a vertex $u\in V$,
then for any (resp. COSC) GD-IFS based on $G$, its attractor $F_{u}$ is also
the attractor of a (resp. COSC) standard IFS.
\end{theorem}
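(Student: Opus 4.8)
The plan is to show that all the information needed to reconstruct $F_u$ as a self-similar set is already encoded in the finitely many directed circuits through $u$. The key observation is that if every directed circuit passes through $u$, then the digraph, viewed from $u$, ``renews'' at $u$: every directed path leaving $u$ either never returns to $u$, or decomposes canonically as a concatenation of ``first-return loops'' at $u$ followed by a (possibly empty) tail that never comes back to $u$. First I would make this precise by introducing the set $\mathcal{C}$ of \emph{first-return circuits} at $u$, that is, directed circuits $\mathbf{c}=e_1\cdots e_k$ starting and ending at $u$ with $\omega(e_i)\neq u$ for $1\leq i<k$. Since every circuit goes through $u$ and the digraph is finite, $\mathcal{C}$ is finite; write $\mathcal{C}=\{\mathbf{c}_1,\dots,\mathbf{c}_p\}$ with associated similarities $T_j:=S_{\mathbf{c}_j}$ and contraction ratios $\rho_{\mathbf{c}_j}\in(-1,1)\setminus\{0\}$. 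The claim will be that $\{T_1,\dots,T_p\}$ is (after adding finitely many further maps, see below) a standard IFS whose attractor is exactly $F_u$, and that it inherits the COSC when the GD-IFS has the COSC.

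Next I would analyse the structure of $F_u$ using \eqref{ee1}. For each $m$, $F_u=\bigcup_{\mathbf{e}\in E_u^m}S_{\mathbf{e}}(F_{\omega(\mathbf{e})})$. I want to pass to the limit and express $F_u$ in terms of the $T_j$. Because every circuit through $u$ passes through $u$ and $G$ is strongly connected, from $u$ one can reach every vertex, so the portion of $F_u$ ``below'' a fixed vertex $v\ne u$ is reached only through paths that, after their last visit to $u$, run along a $u$-avoiding path to $v$. Concretely, let $\mathcal{P}_v$ be the finite set of directed paths from $u$ to $v$ that do not revisit $u$ (these are finite in number because any such path cannot repeat a vertex more than $d_{\max}$ times without creating a circuit avoiding $u$ — in fact simple-path arguments bound them). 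Then every path from $u$ decomposes uniquely as $\mathbf{c}_{j_1}\mathbf{c}_{j_2}\cdots\mathbf{c}_{j_r}\mathbf{q}$ where the $\mathbf{c}_{j_t}\in\mathcal{C}$ and $\mathbf{q}\in\mathcal{P}_v$ for some $v$ (possibly $r=0$, possibly $\mathbf{q}$ empty with $v=u$). Feeding this decomposition into \eqref{ee1} and \eqref{003} (applied with $I_u=\mathrm{conv}\,F_u$, which is legitimate by \eqref{006}), I would obtain
\begin{equation*}
F_u=\bigcup_{\mathbf{c}\in\mathcal{C}}S_{\mathbf{c}}(F_u)\ \cup\ \bigcup_{v\in V}\bigcup_{\mathbf{q}\in\mathcal{P}_v}S_{u\mathbf{q}}(F_v)\Big|_{\text{with each }S_{u\mathbf{q}}(F_v)\text{ replaced appropriately}},
\end{equation*}
but one must be careful: the terms $S_{\mathbf{q}}(F_v)$ for nonempty $\mathbf{q}$ are \emph{not} of the form $S(F_u)$. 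The fix is to note that each $F_v$ is itself a finite union $F_v=\bigcup_{\mathbf{e}'}S_{\mathbf{e}'}(F_{\omega(\mathbf{e}')})$ over paths from $v$; pushing these forward and iterating, the ``tail'' contributions $S_{\mathbf{q}}(F_v)$ with $\mathbf{q}$ ending at $v\ne u$ generate, when we continue expanding along paths, either a new visit to $u$ (handled by the $T_j$) or finitely many further ``dead-end'' pieces. Since $G$ is strongly connected every vertex leads back to $u$, so every branch eventually returns; hence after finitely many expansions every piece is of the form $S_{\mathbf{c}}\circ(\text{further }S_{\mathbf{c}'}\cdots)(F_u)$, and in the limit $F_u=\bigcup_{\mathbf{c}\in\mathcal{C}}S_{\mathbf{c}}(F_u)$ exactly — with the understanding that if $F_u$ happens to be a single point the IFS is degenerate, a case handled separately. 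Thus $F_u$ is the attractor of the standard IFS $\{T_1,\dots,T_p\}$ (discarding any $T_j$ whose image is redundant, and noting each $|\rho_{\mathbf{c}_j}|<1$ since it is a product of the GD-ratios, all of absolute value $<1$).

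Finally, for the COSC statement, I would invoke \eqref{con3}: take $U_u=\mathrm{int}(\mathrm{conv}\,F_u)$. By Proposition \ref{CSC}, for distinct first-return circuits $\mathbf{c}_j\ne\mathbf{c}_{j'}$ the interiors of $S_{\mathbf{c}_j}(\mathrm{conv}\,F_u)$ and $S_{\mathbf{c}_{j'}}(\mathrm{conv}\,F_u)$ are disjoint — indeed $\mathbf{c}_j$ and $\mathbf{c}_{j'}$ share a longest common prefix and then diverge along distinct edges, so Proposition \ref{CSC} applies directly with $\mathbf{e}'=\mathbf{c}_j$, $\mathbf{e}''=\mathbf{c}_{j'}$ (neither is a prefix of the other because both are first-return circuits of $u$, so after the split neither can be $\mathbf{c}_j\mathbf{e}$). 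Moreover $\bigcup_j S_{\mathbf{c}_j}(\mathrm{conv}\,F_u)\subseteq\mathrm{conv}\,F_u$ since each $S_{\mathbf{c}_j}(F_u)\subseteq F_u$. Hence $\{T_1,\dots,T_p\}$ satisfies the COSC with the convex open set $U_u$, which proves the COSC assertion; the non-COSC assertion is the same argument without the separation claim. The main obstacle I anticipate is the bookkeeping in the middle step: rigorously justifying that the ``tail'' pieces $S_{\mathbf{q}}(F_v)$ with $v\ne u$ contribute \emph{nothing new} in the limit, i.e. that $\bigcap_m I_u^m$ coincides with the attractor of $\{T_j\}$. The clean way is probably to run the argument in reverse — show directly that the attractor $K$ of $\{T_1,\dots,T_p\}$ satisfies the defining relation \eqref{gdattract} at vertex $u$ once one also defines $K_v:=\bigcup_{\mathbf{q}\in\mathcal{P}_v}(\text{image of }K\text{ under the appropriate inverse-branch maps})$ for $v\ne u$, and then invoke uniqueness of the GD-attractor list — thereby sidestepping the limiting computation entirely.
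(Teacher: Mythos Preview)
Your approach is essentially the paper's: define the set $\mathcal{C}$ of first-return circuits at $u$ (the paper calls it $L(u)$), show $F_u=\bigcup_{\mathbf{c}\in\mathcal{C}}S_{\mathbf{c}}(F_u)$, and deduce the COSC via Proposition~\ref{CSC}. The COSC step, including the observation that distinct first-return circuits cannot be prefixes of one another, matches the paper exactly.

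Where you diverge is in establishing the equality $F_u=\bigcup_{\mathbf{c}\in\mathcal{C}}S_{\mathbf{c}}(F_u)$, and here you overcomplicate matters and misattribute one justification. You worry about ``tail pieces'' $S_{\mathbf{q}}(F_v)$ with $v\ne u$, propose iterated expansions or a reverse argument via uniqueness of the GD-attractor list, and flag this as ``the main obstacle''. The paper dispenses with all of this in one line: take $m=N:=\#V$ in \eqref{ee1}. Any path $\mathbf{e}=e_{uv_1\cdots v_N}\in E_u^N$ must have $v_k=u$ for some $k\le N$, since otherwise $v_1,\dots,v_N$ are $N$ vertices in $V\setminus\{u\}$, so two coincide and produce a circuit avoiding $u$, contradicting the hypothesis. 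Hence every $\mathbf{e}\in E_u^N$ factors as $\mathbf{c}\,\mathbf{e}''$ with $\mathbf{c}\in\mathcal{C}$, giving $S_{\mathbf{e}}(F_{\omega(\mathbf{e})})\subset S_{\mathbf{c}}(F_u)$, and the nontrivial inclusion follows at once from \eqref{ee1}; the reverse inclusion is immediate. No tails, no limits, no uniqueness argument.

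Relatedly, your sentence ``Since $G$ is strongly connected every vertex leads back to $u$, so every branch eventually returns'' invokes the wrong hypothesis: strong connectivity only guarantees that a return path \emph{exists}, not that \emph{every} path returns. What forces every path from $u$ to return within $N$ steps is precisely the pigeonhole argument above, which uses that all circuits pass through $u$ --- the same hypothesis you correctly invoked (if loosely; ``cannot repeat a vertex more than $d_{\max}$ times'' should read ``cannot repeat any vertex even once'') when bounding $\mathcal{P}_v$.
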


\begin{proof}
Set $N:=\#V$, the number of vertices in $V$. Let $L(u)$ be the set of all
circuits having $u$ as their initial and terminal, and which do not contain
another shorter circuits, that is,%
\begin{equation*}
L(u):=\{\mathbf{e}=e_{uv_{1}v_{2}\cdots v_{k}u}:\text{ each }v_{i}\neq u\in
V,|\mathbf{e}|\leq N\},
\end{equation*}%
where the symbol $e_{uv_{1}v_{2}\cdots
v_{k}u}=e_{uv_{1}}e_{v_{1}v_{2}}\cdots e_{v_{k}u}$ is understood to be a
path consisting of consecutive edges.

We claim that
\begin{equation}
F_{u}=\mathop{\textstyle \bigcup }_{\mathbf{e}^{\prime }\in L(u)}S_{\mathbf{e%
}^{\prime }}(F_{u}),  \label{ee}
\end{equation}%
by using the fact that every circuit goes through vertex $u$.

To see this, we have by (\ref{ee1}) that
\begin{equation*}
F_{u}=\mathop{\textstyle \bigcup }_{\mathbf{e}\in E_{u}^{N}}S_{\mathbf{e}%
}(F_{\omega (\mathbf{e})}).
\end{equation*}%
Since any directed path $\mathbf{e}$ in $E_{u}^{N}$ can be written as%
\begin{equation*}
\mathbf{e}=e_{uv_{1}v_{2}\cdots v_{N}},
\end{equation*}%
we see that at least one of vertices $v_{1},v_{2},\cdots ,v_{N}$ must be $u$%
, otherwise, one of them would appear twice, thus producing a circuit,
contradicting the assumption that every directed circuit goes through vertex $%
u$. There exists some index $k$ such that $v_{k}=u$ and the path visits $u$
the second time (besides the initial time), and
\begin{equation*}
\mathbf{e}=e_{uv_{1}v_{2}\cdots v_{k-1}uv_{k+1}\cdots
v_{N}}=e_{uv_{1}v_{2}\cdots v_{k-1}u}e_{uv_{k+1}\cdots v_{N}}=\mathbf{e}%
^{\prime }\mathbf{e}^{\prime \prime },
\end{equation*}%
where $\mathbf{e}^{\prime }=e_{uv_{1}v_{2}\cdots v_{k-1}u}\in L(u)$ and $%
\mathbf{e}^{\prime \prime }$ is a path with initial $u$ if it exists
(possibly $\mathbf{e}^{\prime \prime }$ is empty and the following argument
will become easier). From this, we know that%
\begin{equation*}
S_{\mathbf{e}}(F_{\omega (\mathbf{e})})=S_{\mathbf{e}^{\prime }}(S_{\mathbf{e%
}^{\prime \prime }}(F_{\omega (\mathbf{e}^{\prime \prime })}))\subset S_{%
\mathbf{e}^{\prime }}(F_{u}),
\end{equation*}%
since $S_{\mathbf{e}^{\prime \prime }}(F_{\omega (\mathbf{e}^{\prime \prime
})})\subset F_{u}$ by (\ref{ee1}). It follows that
\begin{equation*}
F_{u}=\mathop{\textstyle \bigcup }_{\mathbf{e}\in E_{u}^{N}}S_{\mathbf{e}%
}(F_{\omega (\mathbf{e})})\subset \mathop{\textstyle \bigcup }_{\mathbf{e}%
^{\prime }\in L(u)}S_{\mathbf{e}^{\prime }}(F_{u}).
\end{equation*}

The opposite inclusion is also clear since, by (\ref{ee1}),
\begin{equation*}
S_{\mathbf{e}^{\prime }}(F_{u})\subset F_{u},
\end{equation*}%
thus showing that our claim (\ref{ee}) holds true. Therefore, $F_{u}$ is the
attractor of the IFS
\begin{equation}
\Phi :=\{S_{\mathbf{e}^{\prime }}:\mathbf{e}^{\prime }\in L(u)\}.  \label{hi}
\end{equation}

If the GD-IFS $\digamma $ further satisfies the COSC, we claim that the
IFS $\Phi $ given by (\ref{hi}) also satisfies the COSC. Indeed, by
definition of the COSC and the fact that $\Phi $ has attractor $F_{u}$, we
need only to show that the interiors of two intervals $S_{\mathbf{e}^{\prime
}}(\mathrm{conv}\,F_{u})$ and $S_{\mathbf{e}^{\prime \prime }}(\mathrm{conv}%
\,F_{u})$ are disjoint, where $\mathbf{e}^{\prime },\mathbf{e}^{\prime
\prime }$ are in $L(u)$. But this assertion immediately follows from
Proposition \ref{CSC}.
\end{proof}

The following easy property of powers of primes is used in the examples
in Section \ref{Sect4}.

\begin{proposition}
\label{easy}Let $\{a_{i}\}_{i=1}^{n}$ be distinct positive prime numbers.
Then
\begin{equation*}
1\notin A^{\mathbb{Q}^{\ast }}\text{ \ for }A=\{a_{i}^{-1}\}_{i=1}^{n}.
\end{equation*}
\end{proposition}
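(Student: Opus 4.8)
The plan is to reduce the statement about rational exponents to one about integer exponents, and then invoke unique factorization. Suppose, for contradiction, that $1\in A^{\mathbb{Q}^{\ast }}$, so that there is a non-zero rational vector $(m_{i})_{i=1}^{n}\in(\mathbb{Q}^n)^{\ast}$ with
\begin{equation*}
\tprod_{i=1}^{n}a_{i}^{-m_{i}}=1,\qquad\text{equivalently}\qquad \tprod_{i=1}^{n}a_{i}^{m_{i}}=1 .
\end{equation*}
First I would choose a common denominator $D\in\mathbb{Z}_{+}^{\ast}$ of the $m_{i}$ and set $k_{i}:=Dm_{i}\in\mathbb{Z}$; the $k_{i}$ are not all zero, and raising the identity to the $D$-th power gives $\tprod_{i=1}^{n}a_{i}^{k_{i}}=1$.

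Next I would separate the exponents by sign: writing $k_{i}^{+}=\max\{k_{i},0\}$ and $k_{i}^{-}=\max\{-k_{i},0\}$, so that $k_i=k_i^+-k_i^-$, the identity rearranges to
\begin{equation*}
\tprod_{i=1}^{n}a_{i}^{k_{i}^{+}}=\tprod_{i=1}^{n}a_{i}^{k_{i}^{-}},
\end{equation*}
an equation between two positive integers. The left-hand side is a product of the primes $a_i$ with $k_i>0$, each appearing to a positive power, and the right-hand side is a product of the primes $a_i$ with $k_i<0$; since the $a_{i}$ are pairwise distinct primes, these two prime factorizations involve disjoint sets of primes. By the fundamental theorem of arithmetic, the only way two such products can be equal is if both are the empty product, i.e. $k_{i}^{+}=k_{i}^{-}=0$ for every $i$, hence $k_{i}=0$ for all $i$, contradicting the choice of a non-zero vector. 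Therefore no such $(m_{i})$ exists and $1\notin A^{\mathbb{Q}^{\ast }}$.

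There is no real obstacle here — the only point requiring a little care is the bookkeeping when clearing denominators and splitting into positive and negative exponents, so as to land on a genuine identity of positive integers to which unique factorization applies; everything else is routine.
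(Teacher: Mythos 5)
Your proof is correct and follows essentially the same route as the paper's: clear denominators to reduce to integer exponents, split into positive and negative parts to obtain an equality of positive integers, and invoke unique factorization. The only cosmetic difference is that you phrase the contradiction via disjointness of the prime supports while the paper phrases it via distinctness of the two exponent vectors; these are the same argument.
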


\begin{proof}
Suppose to the contrary, that $1\in A^{\mathbb{Q}^{\ast }}$. Then $%
1=\tprod_{i=1}^{n}\left( a_{i}^{-1}\right) ^{s_{i}}$ for some non-zero
vector $(s_{i})_{i=1}^{n}$\ of rationals. Let $q$ be the least common denominator
of the rationals $s_{i}$. Taking the $q$th power, it follows that%
\begin{equation*}
m:=\tprod_{i=1}^{n}a_{i}^{qs_{i}^{-}}=\tprod_{i=1}^{n}a_{i}^{qs_{i}^{+}}
\end{equation*}%
where $s_{i}^{+}=\max \{s_{i},0\},\ s_{i}^{-}=\max \{-s_{i},0\}$ so that $%
s_{i}=s_{i}^{+}-s_{i}^{-}$. As the $s_{i}$ are not all zero, the vectors of
integers $(qs_{i}^{+})_{i=1}^{n}$ and $(qs_{i}^{-})_{i=1}^{n}$ are distinct.
By the uniqueness of the prime factorisation of the integer $m$, we see that
$(qs_{i}^{+})_{i=1}^{n}=(qs_{i}^{-})_{i=1}^{n}$, a contradiction.
\end{proof}

The following assertion was essentially obtained in \cite[Theorem 1.4 and
the end of Section $1$]{Boore2014}.\ Here we give a simpler proof under
stronger assumptions withconditions $(2)$, $(3)$ in the next
theorem.

\begin{theorem}
\label{App} Let $G=(V,E)$ be a strongly connected digraph with $d_{w}\geq 2$
for each $w\in V$. Suppose that a given GD-IFS of similarities based on $G$
satisfies the CSSC, and $\mathrm{conv}\,F_{w}=[0,1]$ for each $w\in V$. For
some vertex $u\in V$, suppose the following conditions hold.

\begin{enumerate}
\item[$(1)$] There is a directed circuit that does not pass through $u$.

\item[$(2)$] All basic gaps have the same length $\delta >0$.

\item[$(3)$] For each vertex $v\neq u$, we have $F_{u}\nsubseteq F_{v}$ and $%
1-F_{u}\nsubseteq F_{v}$.
\end{enumerate}

Then $F_{u}$ is not the attractor of any standard IFS defined on $\mathbb{R}$%
.
\end{theorem}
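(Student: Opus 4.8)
The plan is to suppose, for contradiction, that $F_u$ is the attractor of a standard IFS $\Phi=\{T_1,\dots,T_m\}$ of similarities on $\mathbb{R}$, write $r_j$ for the (real) contraction ratio of $T_j$, and recall $\mathrm{conv}\,F_u=[0,1]$. First I would record the gap structure of the $F_w$'s. Since the given GD-IFS satisfies the CSSC, hence the COSC, Proposition \ref{P2} together with hypothesis $(2)$ gives, for every $w\in V$,
\begin{equation*}
\mathrm{GL}(F_w)=\{\delta\}\cup\big\{\delta\,|\rho_{\mathbf e}|:\mathbf e\ \text{a directed path from }w\ \text{of length}\ \ge 1\big\},
\end{equation*}
so the largest gap of $F_w$ has length $\delta$ and is attained precisely by its $d_w-1$ basic gaps. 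These basic gaps cut $[0,1]$ into $d_w$ pairwise disjoint closed ``blocks'' $B^w_1,\dots,B^w_{d_w}$, with $F_w\cap B^w_k=S^{(k)}_w\big(F_{\omega(e^{(k)}_w)}\big)$ by the CSSC.

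The key observation is then: \emph{any similar copy $X$ of $F_u$ (possibly reflected) whose contraction ratio has modulus $<1$ and which is contained in some $F_w$ must lie inside a single block of $F_w$}. Indeed every gap of $X$ has length $<\delta$, so $\mathrm{conv}\,X$ can contain no length-$\delta$ gap of $F_w$ (such a gap would then be a gap of $X$, being disjoint from $X\subseteq F_w$); and $\min X,\max X\in F_w$ cannot lie in the interior of any length-$\delta$ gap of $F_w$; hence $\mathrm{conv}\,X$ neither contains nor straddles a basic gap of $F_w$, so $\mathrm{conv}\,X\subseteq B^w_k$ and $X\subseteq S^{(k)}_w(F_{\omega(e^{(k)}_w)})$ for some $k$. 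Applying this to the level-$1$ pieces $T_j(F_u)$ (each has modulus $<1$ and lies in $F_u$) and grouping by block index $k$ yields, for every $w_1=\omega(e^{(k)}_u)$,
\begin{equation*}
F_{w_1}=\bigcup\nolimits_{\,j:\,T_j(F_u)\subseteq B^u_k}(S^{(k)}_u)^{-1}T_j(F_u),
\end{equation*}
a finite union of similar copies of $F_u$ of modulus $\le 1$. If one such copy has modulus exactly $1$, then comparing convex hulls shows it equals $F_u$ or $1-F_u$, whence $F_u\subseteq F_{w_1}$ or $1-F_u\subseteq F_{w_1}$; if $w_1\neq u$ this contradicts $(3)$ and we are done. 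Otherwise every copy has modulus $<1$, and iterating the block decomposition along edges (using that $G$ is strongly connected) shows that, unless $(3)$ is already violated at some vertex $\neq u$, \emph{every} $F_w$ is a finite union of similar copies of $F_u$, each of modulus $<1$.

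Finally I would invoke $(1)$. Fix a directed circuit $L$ avoiding $u$, with vertices $v_0,v_1,\dots,v_\ell=v_0$ (all $\neq u$) and edges $e^{(k_i)}_{v_i}$ from $v_i$ to $v_{i+1}$, so $S_L=S^{(k_0)}_{v_0}\circ\cdots\circ S^{(k_{\ell-1})}_{v_{\ell-1}}$ with $|\rho_L|<1$. Start from a decomposition $F_{v_0}=\bigcup_{m\in M^{(0)}}X_m$ into copies of $F_u$ of moduli $\sigma_m<1$, and ``peel around $L$'': at each step restrict to the block of $F_{v_i}$ indexed by $k_i$, keep the copies contained in it (a non-empty subfamily, since $F_{v_i}\cap B^{v_i}_{k_i}=S^{(k_i)}_{v_i}(F_{v_{i+1}})\neq\emptyset$), and lift by $(S^{(k_i)}_{v_i})^{-1}$; if a kept copy ever reaches modulus $1$ we contradict $(3)$ at $v_{i+1}\neq u$. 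After $n$ full loops this produces a valid — hence non-empty — decomposition $F_{v_0}=\bigcup_{m\in M^{(n)}}(S_L)^{-n}X_m$ with $M^{(0)}\supseteq M^{(1)}\supseteq\cdots$, and each kept copy $(S_L)^{-n}X_m$ lies in $F_{v_0}\subseteq[0,1]$, forcing $\sigma_m|\rho_L|^{-n}\le1$, i.e. $n\le\log\sigma_m/\log|\rho_L|$. Thus $M^{(n)}=\emptyset$ once $n>\max_{m\in M^{(0)}}\log\sigma_m/\log|\rho_L|$, contradicting $M^{(n)}\neq\emptyset$; this completes the proof. The step I expect to need most care is the block structure in the second paragraph — deducing from Proposition \ref{P2} that $\delta$ is attained \emph{only} by basic gaps, together with the bookkeeping for negative (reflecting) ratios and interval endpoints showing a modulus-$<1$ copy of $F_u$ cannot meet or straddle a largest gap; once that ``small copy sits inside one block'' principle is in hand, the two descents are routine.
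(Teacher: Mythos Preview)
Your proof is correct and shares its starting observation with the paper's, but the two finish differently. Both use that under hypothesis $(2)$ every gap of every $F_w$ has length at most $\delta$, with equality only at the basic gaps, so a similar copy of $F_u$ of modulus $<1$ contained in $F_w$ must sit inside a single first-level cell. From there the paper (Step~1) descends all the way: given $f(F_u)\subset F_v$ with $f$ contracting, it takes a \emph{longest} path $\mathbf{e}_1$ with $f(F_u)\subset S_{\mathbf{e}_1}(F_{\omega(\mathbf{e}_1)})$, argues that $S_{\mathbf{e}_1}^{-1}\circ f$ has ratio $\pm 1$, and invokes $(3)$ to force $\omega(\mathbf{e}_1)=u$. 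Thus each $f_i(F_u)\subset S_{\mathbf{e}_i}(F_u)$ for some circuit $\mathbf{e}_i$ through $u$, and (Step~2) the CSSC via Proposition~\ref{CSC} exhibits a cell $S_{L_1L^m}(F_w)$ disjoint from every $S_{\mathbf{e}_i}(F_u)$, contradicting $S_{L_1L^m}(F_w)\subset F_u$. You instead iterate the one-step block principle outward along edges to write $F_{v_0}$ (for $v_0$ on the avoiding circuit $L$) as a finite union of modulus-$<1$ copies of $F_u$, and then pump around $L$: each loop divides the surviving moduli by $|\rho_L|$, so after boundedly many loops the surviving index set would be empty while still being required to cover $F_{v_0}$. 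Your route avoids the path-disjointness argument of Proposition~\ref{CSC} entirely; the paper's route is shorter once that proposition is in hand and avoids your covering bookkeeping.

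One small point worth tightening: the sentence ``Otherwise every copy has modulus $<1$'' is ambiguous, since a modulus-$1$ copy at a step where the current vertex happens to be $u$ would not violate $(3)$. This is harmless provided you iterate along a \emph{simple} path from $u$ to $v_0$ (so all intermediate vertices are $\neq u$), and likewise along $L$ all vertices are $\neq u$ by hypothesis $(1)$; saying so explicitly removes the only gap in the exposition.
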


\begin{proof}
The proof is divided into two steps.

\emph{Step 1.} We claim that, for any $v\in V$ and any contracting similarity
$f$ with $f(F_{u})\subset F_{v}$, there exists some path $\mathbf{e}$
leaving $v$ with terminal $\omega (\mathbf{e})=u$ such that
\begin{equation}
f(F_{u})\subset S_{\mathbf{e}}(F_{u}).  \label{Step1}
\end{equation}%
Indeed, as $F_{v}$ consists of the level-$1$ cells $S_{e}(F_{\omega (e)})$
for edges $e$ leaving $v$ by using (\ref{gdattract}), the $f(F_{u})$ must
belong to only one of those cells, say%
\begin{equation}
f(F_{u})\subset S_{e}(F_{\omega (e)})\text{ \ for some edge }e\text{ leaving
}v.  \label{S3}
\end{equation}%
Otherwise, there are two points in $f(F_{u})$ lying in two distinct level-$%
1 $ cells, and as $f(F_{u})\subset F_{v}$, we know that $f(F_{u})$ spans a
basic gap of $F_{v}$, implying that $f(F_{u})$ has a gap, containing a basic
gap of $F_{v}$, whose length is clearly greater than or equal to $\delta $.
However, this is impossible, because all gap lengths of $F_{u}$ do not
exceed $\delta $ by assumption $(2)$ and  (\ref{gls}), so that all the gap
lengths of $f(F_{u})$ are strictly smaller than $\delta $ by using the
contractivity of $f$.

By (\ref{ee1}), it follows that%
\begin{equation*}
f(F_{u})\subset F_{v}=\mathop{\textstyle \bigcup }\limits_{\mathbf{e}%
^{\prime }\in E_{v}^{m}}S_{\mathbf{e}^{\prime }}(F_{\omega (\mathbf{e}%
^{\prime })})\text{ \ \ for any }m\geq 1,
\end{equation*}%
where $E_{v}^{m}$ is the set of all paths leaving $v$ with the same length $%
m $ as before. As $f(F_{u})$ has fixed diameter and cells $S_{\mathbf{e}%
^{\prime }}(F_{\omega (\mathbf{e}^{\prime })})$ have arbitrarily small diameters
by taking $m$ large, we can choose a longest directed path $\mathbf{e}%
_{1}$ leaving $v$, which exists by using (\ref{S3}) and the fact that
distinct cells of the same length are disjoint (see Proposition \ref{CSC}),
such that
\begin{equation}
f(F_{u})\subset S_{\mathbf{e}_{1}}(F_{\omega (\mathbf{e}_{1})})\text{.}
\label{eee}
\end{equation}

We show that the contraction ratio $\rho $ of the mapping $S_{\mathbf{e}%
_{1}}^{-1}\circ f$ satisfies $\rho =\pm 1$.

The diameter of each $F_{w}$ equals $1$, since $\mathrm{%
conv}\,F_{w}=[0,1]$ for each $w\in V$ by our assumption. By (\ref{eee})%
\begin{equation}
S_{\mathbf{e}_{1}}^{-1}\circ f(F_{u})\subset F_{\omega (\mathbf{e}_{1})},
\label{S4}
\end{equation}%
implying that $|\rho |\leq 1$ by comparing the diameters of $%
F_{u}$ and $F_{\omega (\mathbf{e}_{1})}$ and noting that $S_{\mathbf{e}%
_{1}}^{-1}\circ f$ is a similarity.

If $|\rho |<1$, we will derive a contradiction. Indeed, by (\ref{S4}), we
can apply (\ref{S3}), with $f$ being replaced by $S_{\mathbf{e}%
_{1}}^{-1}\circ f$ and $v$ replaced by $\omega (\mathbf{e}_{1})$, and obtain%
\begin{equation*}
S_{\mathbf{e}_{1}}^{-1}\circ f(F_{u})\subset S_{e}(F_{\omega (e)})
\end{equation*}%
for some edge $e$ leaving $\omega (\mathbf{e}_{1})$. From this and
that $\omega (e)=\omega (\mathbf{e}_{1}e)$,%
\begin{equation*}
f(F_{u})\subset S_{\mathbf{e}_{1}}(S_{e}(F_{\omega (e)}))=S_{\mathbf{e}%
_{1}e}(F_{\omega (\mathbf{e}_{1}e)}),
\end{equation*}%
which contradicts the fact that $\mathbf{e}_{1}$ is the longest path by
virtue of (\ref{eee}). Thus $|\rho |=1$.

Therefore, if $\rho =1$, then $F_{u}+c\subset F_{\omega (\mathbf{e}_{1})}$
for some translation $c\in
\mathbb{R}
$ using (\ref{S4}), which implies that%
\begin{equation*}
\lbrack 0,1]+c=(\mathrm{conv}\,F_{u})+c=\mathrm{conv}\left( F_{u}+c\right)
\subset \mathrm{conv}\,F_{\omega (\mathbf{e}_{1})}=[0,1]
\end{equation*}%
using our assumption that $\mathrm{conv}\,F_{w}=[0,1]$ for each $w\in V$.
Then $c=0$, and so
\begin{equation*}
F_{u}\subset F_{\omega (\mathbf{e}_{1})},
\end{equation*}%
showing that $\omega (\mathbf{e}_{1})=u$ by assumption $(3)$ that $%
F_{u}\nsubseteq F_{v}$ if $v\neq u$.

Similarly, if $\rho =-1$, then $-F_{u}+c\subset F_{\omega (\mathbf{e}_{1})}$
for some translation $c\in
\mathbb{R}
$ by (\ref{S4}), which implies that%
\begin{equation*}
\lbrack -1,0]+c=\mathrm{conv}\left( -F_{u}\right) +c=\mathrm{conv}\left(
-F_{u}+c\right) \subset \mathrm{conv}\,(F_{\omega (\mathbf{e}_{1})})=[0,1]
\end{equation*}%
using our assumption that $\mathrm{conv}\,F_{w}=[0,1]$ for each $w\in V$.
We must have $c=1$, and so
\begin{equation*}
1-F_{u}\subset F_{\omega (\mathbf{e}_{1})},
\end{equation*}%
showing that $\omega (\mathbf{e}_{1})=u$ again by assumption $(3)$ that $%
1-F_{u}\nsubseteq F_{v}$ if $v\neq u$.

Therefore, noting that $\omega (\mathbf{e}_{1})=u$ in (\ref{eee}), we obtain
(\ref{Step1}) with $\mathbf{e}=\mathbf{e}_{1}$, proving our claim.

\emph{Step 2.} We show that $F_{u}$ is not the attractor of any standard IFS.

Assume to the contrary that there exists a standard IFS $\{f_{i}\}$ such
that%
\begin{equation*}
F_{u}=\mathop{\textstyle \bigcup }_{i}f_{i}(F_{u}).
\end{equation*}%
As $f_{i}(F_{u})\subset F_{u}$, using (\ref{Step1}) with $v=u$, we know that
$f_{i}(F_{u})\subset S_{\mathbf{e}_{i}}(F_{u})$, and so
\begin{equation}
F_{u}=\mathop{\textstyle \bigcup }_{i}f_{i}(F_{u})\subset \mathop{\textstyle
\bigcup }_{i}S_{\mathbf{e}_{i}}(F_{u}),  \label{ss}
\end{equation}%
where each $\mathbf{e}_{i}$ is a directed circuit from initial $u$ to
terminal $u$. By condition $(1)$, there is a vertex $w\neq u$ contained in a
circuit $L$ that does not pass through $u$. By the strong connectivity, we
can find a simple path $L_{1}$ (i.e. a path visits any vertex
at most for once) from $u$ to $w$.

Note that the path $L_{1}L^{m}$  from $u$ to $w$ visits $u$ only once.
We can pick an integer $m$ so large that the path length is greater than $\max \{|%
\mathbf{e}_{i}|\}_{i}$. By (\ref{ee1}) and (\ref{ss}),%
\begin{equation}
S_{L_{1}L^{m}}(F_{w})\subset F_{u}\subset \mathop{\textstyle \bigcup }_{i}S_{%
\mathbf{e}_{i}}(F_{u}).  \label{L8}
\end{equation}

Note that $\{S_{e}\}$ satisfies the CSSC by assumption $(2)$, and so $%
S_{L_{1}L^{m}}(F_{w})$ is disjoint with any set $S_{\mathbf{e}_{i}}(F_{u})$
in (\ref{L8})  using Proposition \ref{CSC}, since the path $L_{1}L^{m}$
does not start with any of these paths $\mathbf{e}_{i}$, otherwise $%
L_{1}L^{m}$ would visit $u$ twice. This contradicts (%
\ref{L8}), thus showing that $F_{u}$ is not self-similar.
\end{proof}

\end{document}